\newtheorem{theorem}[equation]{Theorem}
\newtheorem{proposition}[equation]{Proposition}
\newtheorem{lemma}[equation]{Lemma}
\newtheorem{corollary}[equation]{Corollary}
\newtheorem{conjecture}[equation]{Conjecture}
\newtheorem{question}[equation]{Question}
\newtheorem{exercise}[equation]{Exercise}
\theoremstyle{definition}
\newtheorem{example}[equation]{Example}
\theoremstyle{remark}
\newtheorem{remark}[equation]{Remark}
\makeatletter\@addtoreset{equation}{section} \makeatother
\newcommand{\mumu}{\boldsymbol{\mu}}
\newtheoremstyle{dotless}{}{}{\rm}{}{\sc}{}{ }{}
\theoremstyle{dotless}
\newtheorem{case}[subsection]{Case:}
\newcommand{\Q}{\mathbb{Q}}
\renewcommand{\SS}{\mathfrak{S}}
\newcommand{\M}{\mathcal{M}}
\newcommand{\bP}{\mathbb{P}}
\newcommand{\bQ}{\mathbb{Q}}
\newcommand{\bT}{\mathbb{T}}
\newcommand{\cC}{\mathcal{C}}
\newcommand{\cO}{\mathcal{O}}
\newcommand{\cL}{\mathcal{L}}
\newcommand{\cM}{\mathcal{M}}
\newcommand{\cJ}{\mathcal{J}}
\newcommand{\Bs}{\mathrm{Bs}}
\DeclareMathOperator{\mult}{mult}
\title{Birational rigidity and alpha invariants of Fano varieties}
\author{Ivan Cheltsov, Arman Sarikyan, Ziquan Zhuang}
\dedicatory{To Slava Shokurov on the~occasion of his 70th birthday.}
\address{\emph{Ivan Cheltsov}
\newline
\textnormal{University of Edinburgh,  Edinburgh, Scotland.}
\newline
\textnormal{\texttt{I.Cheltsov@ed.ac.uk}}}
\address{\emph{Arman Sarikyan}
\newline
\textnormal{University of Edinburgh,  Edinburgh, Scotland.}
\newline
\textnormal{\texttt{ar.sarikyan@gmail.com}}}
\address{\emph{Ziquan Zhuang}
\newline
\textnormal{Johns Hopkins University, Baltimore, MD, USA.}
\newline
\textnormal{\texttt{zzhuang@jhu.edu}}}
\begin{document}

\begin{abstract}
We prove that for every $\epsilon>0$, there is a~birationally super-rigid Fano variety $X$
such that $\frac{1}{2}\leqslant\alpha(X)\leqslant \frac{1}{2}+\epsilon$.
Also we show that for every $\epsilon>0$, there is a~Fano variety $X$ and a~finite subgroup $G\subset\mathrm{Aut}(X)$
such that $X$ is $G$-birationally super-rigid, and $\alpha_G(X)<\epsilon$.
\end{abstract}

\maketitle

\tableofcontents

Throughout this paper, we assume that all varieties are projective, normal and defined over $\mathbb{C}$.

\section{Introduction}
\label{section:intro}

Let $X$ be a~Fano variety with terminal singularities.
If $\mathrm{rk}\,\mathrm{Cl}(X)=1$, then $X$ is a~Mori fibre~space.
In this case, we say that $X$ is \emph{birationally rigid} if $X$ is not birational to other Mori fibre spaces~\cite{Ch05}.
Similarly, we say that $X$ is \emph{birationally super-rigid} if it is birationally rigid and
$\mathrm{Bir}(X)=\mathrm{Aut}(X)$.
Examples of birationally super-rigid smooth Fano varieties include
\begin{itemize}
\item smooth hypersurfaces in $\mathbb{P}^{n+1}$ of degree $n+1\geqslant 4$ \cite{Cheltsov2000,EinFernexMustata,Fernex,IsMa71,Kollar,Pukhlikov87,Pukhlikov97,Zhuang};
\item smooth weighted hypersuraces in $\mathbb{P}(1^{n+1},n)$ of degree $2n\geqslant 6$ \cite{Pukhlikov89}.
\end{itemize}
Note that these examples of smooth Fano varieties are known to be K-stable \cite{AbbanZhuang2021,Cheltsov2001,CheltsovPark,CPW,Dervan2,Fujita}.
One can prove this by using Tian's criterion. Namely, recall from \cite{OdakaSano,Ti87} that $X$ is K-stable~if
$$
\alpha(X)>\frac{\mathrm{dim}(X)}{\mathrm{dim}(X)+1},
$$
where $\alpha(X)$ is the~$\alpha$-invariant of $X$ that can be defined as follows:
$$
\alpha(X)=\mathrm{sup}\left\{\lambda\in\mathbb{Q}\ \left|\ \aligned
&\text{the log pair}\ \left(X, \lambda D\right)\ \text{is log canonical}\\
&\text{for any effective $\mathbb{Q}$-divisor}\ D\sim_{\mathbb{Q}} -K_{X}\\
\endaligned\right.\right\}.
$$
If $X$ is smooth, then $X$ is also K-stable in the~case when $\alpha(X)=\frac{\mathrm{dim}(X)}{\mathrm{dim}(X)+1}$ and~\mbox{$\mathrm{dim}(X)\geqslant 2$}~\cite{Fujita}.
On the~other hand, if $X$ is a~smooth hypersurface in $\mathbb{P}^{n+1}$ of degree $n+1$, then \cite{Cheltsov2001,CheltsovPark} gives
$$
\alpha(X)\geqslant\frac{n}{n+1}=\frac{\mathrm{dim}(X)}{\mathrm{dim}(X)+1}.
$$
Similarly, if $X$ is smooth hypersuraces in $\mathbb{P}(1^{n+1},n)$ of degree $2n\geqslant 2$,
then \cite{CPW} gives
$$
\alpha(X)\geqslant\frac{2n-1}{2n}>\frac{n}{n+1}=\frac{\mathrm{dim}(X)}{\mathrm{dim}(X)+1}.
$$
This shows that all smooth hypersurfaces in $\mathbb{P}^{n+1}$ of degree $n+1\geqslant 3$
and all smooth weighted hypersuraces in $\mathbb{P}(1^{n+1},n)$ of degree $2n\geqslant 4$ are K-stable.
This gives an evidence for

\begin{conjecture}[{\cite{KimOkadaWon}}]
\label{conjecture:Odaka-Okada}
Let $X$ be a~Fano variety with terminal singularities such that $\mathrm{rk}\,\mathrm{Cl}(X)=1$.
Suppose that $X$ is birationally rigid. Then $X$ is K-stable.
\end{conjecture}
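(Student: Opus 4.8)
Tian's criterion is far too weak for this: the main results of this paper show that $\alpha(X)$ can be forced down to $\tfrac12+\epsilon$, so neither $\alpha(X)>\tfrac{\dim X}{\dim X+1}$ nor even the elementary estimate $\delta(X)\geqslant\tfrac{\dim X+1}{\dim X}\,\alpha(X)$ is of any help. So the plan is to work with the stability threshold $\delta(X)$ through the valuative criterion of Fujita and Li: $X$ is K-stable if and only if
$$
\beta_X(E):=A_X(E)-S_X(E)>0\qquad\text{for every prime divisor }E\text{ over }X,
$$
where, for a model $\pi\colon Y\to X$ with $E\subset Y$, one sets $S_X(E)=\tfrac{1}{V}\int_0^{\tau}\vol\bigl(\pi^{*}(-K_X)-tE\bigr)\,dt$ with $V=(-K_X)^{\dim X}$ and $\tau=\tau_X(E)$ the pseudo-effective threshold. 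The bridge to the hypothesis is the Noether--Fano--Iskovskikh principle in its modern form: for a Mori fibre space $X$ with $\rank\Cl(X)=1$ and terminal singularities, birational super-rigidity is equivalent to the statement that for every mobile linear system $\mathcal{M}$ on $X$, writing $\mathcal{M}\sim_{\mathbb{Q}}-\mu K_X$ with $\mu\in\mathbb{Q}_{>0}$, the pair $\bigl(X,\tfrac1\mu\mathcal{M}\bigr)$ has canonical singularities; birational rigidity is the same up to the action of $\mathrm{Bir}(X)$.

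The heart of the plan is to feed this canonicity into $S_X(E)$ via the Zariski decomposition. For $t\in[0,\tau]$ write $\pi^{*}(-K_X)-tE=P_t+N_t$ with $P_t$ the positive (nef) part and $N_t$ the negative part, so that $\vol(\pi^{*}(-K_X)-tE)=P_t^{\dim X}$. Passing to $|{-mK_X}|$ for $m$ large and divisible, the mobile part of $|\pi^{*}(-mK_X)-mtE|$ descends, after pushforward to $X$, to a mobile linear system $\mathcal{M}_t$ with $\mathcal{M}_t\sim_{\mathbb{Q}}-\mu_t K_X$ whose fixed divisorial contribution along $E$ is controlled by $\ord_E(N_t)$. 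Applying the canonicity characterisation to the $\mathcal{M}_t$ forces the negative part to absorb essentially all of the vanishing along $E$ once $t>A_X(E)-1$, i.e. $\ord_E(N_t)\gtrsim t-A_X(E)$ in that range; hence $P_t$, and with it $\vol(\pi^{*}(-K_X)-tE)=P_t^{\dim X}$, is compelled to decay, and the target inequality $S_X(E)=\tfrac1V\int_0^\tau P_t^{\dim X}\,dt<A_X(E)$ becomes a statement about the \emph{rate} of that decay. In the merely rigid case one needs a $\mathrm{Bir}(X)$-equivariant form of the argument, which is itself delicate because K-stability is not manifestly a birational invariant.

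The step I expect to be the genuine obstacle is exactly quantifying that decay, i.e. turning ``$N_t$ grows linearly'' into ``$P_t^{\dim X}$ is small enough that $\int_0^\tau P_t^{\dim X}\,dt<V\cdot A_X(E)$''. The negative part $N_t$ is supported on finitely many \emph{rigid} prime divisors, and the examples constructed in this paper --- where $\alpha(X)$ is small precisely because $X$ carries highly singular rigid divisors $D\sim_{\mathbb{Q}}-kK_X$ --- show that no such bound can come from controlling a single rigid divisor (that is what $\alpha$-type invariants measure, and they are too weak here); one must control the \emph{combined} contribution of all rigid components to the volume integral. At present this is understood only when $X$ has enough hyperplane-section structure to run an induction on $\dim X$, cutting by general members of $|{-K_X}|$ and applying adjunction together with a rigidity property of the section --- for instance for large classes of Fano complete intersections of index one. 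Bridging the gap between ``canonicity of all mobile systems'' and ``$\delta(X)>1$'' in full generality, in particular for the varieties constructed here, is precisely what the conjecture asks.
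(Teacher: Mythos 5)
The statement you were asked to prove is Conjecture~\ref{conjecture:Odaka-Okada}, which the authors explicitly record as open (``it is still open in full generality''); the paper contains no proof of it, only partial evidence, so there is nothing to compare your argument against except the surrounding results. Your write-up is candid that it is a strategy rather than a proof, and the strategy is the standard modern one: combine the Noether--Fano characterisation of (super-)rigidity via canonicity of mobile linear systems with the Fujita--Li valuative criterion $\beta_X(E)=A_X(E)-S_X(E)>0$. This is essentially the framework of Stibitz--Zhuang (Theorem~\ref{theorem:Stibizt-Zhuang}), and the place where your plan stalls is exactly the place where their proof needs the extra hypothesis $\alpha(X)\geqslant\frac{1}{2}$: canonicity of mobile systems controls only the mobile part of $|\pi^{*}(-mK_X)-mtE|$ and says nothing about how much of the vanishing along $E$ can be carried by the fixed (rigid) divisorial part $N_t$. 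Without an input of $\alpha$-type, or some other control on rigid divisors, the heuristic $\ord_E(N_t)\gtrsim t-A_X(E)$ you posit does not translate into a usable upper bound on $P_t^{\dim X}$, so the integral defining $S_X(E)$ is not bounded by $A_X(E)$. Theorem~\ref{theorem:alpha} of the paper shows that the threshold $\frac{1}{2}$ in Stibitz--Zhuang is optimal, so no cheap strengthening of the $\alpha$-bound will close this gap.

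Two further points. First, the conjecture concerns birational \emph{rigidity}, not super-rigidity, so the Noether--Fano input only yields canonicity of $\bigl(X,\frac{1}{\mu}\mathcal{M}\bigr)$ after untwisting by a suitable element of $\mathrm{Bir}(X)$; since $A_X(E)$ and $S_X(E)$ are attached to $X$ itself and are not invariant under such untwisting, your reduction is already incomplete at the first step, as you partly acknowledge. Second, to be explicit about the verdict: your proposal correctly identifies the known reductions and correctly locates the obstruction, but it does not prove the statement, and no proof is currently known --- consistent with the paper presenting it only as a conjecture verified in special cases.
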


This conjecture has been already verified for many Fano varieties \cite{Ch08b,Ch09a,Ch09b,KimOkadaWon,CP,CPR,Zhuang,Suzuki},
but it is still open in full generality (cf. \cite{OdakaOkada}). On the~other hand, we have the~following result:

\begin{theorem}[\cite{StibitzZhuang}]
\label{theorem:Stibizt-Zhuang}
Let $X$ be a~Fano variety with terminal singularities such that $\mathrm{rk}\,\mathrm{Cl}(X)=1$.
Suppose that $X$ is birationally super-rigid and $\alpha(X)\geqslant\frac{1}{2}$. Then $X$ is K-stable.
\end{theorem}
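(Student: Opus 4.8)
The plan is to deduce K-stability from the valuative criterion of Fujita and Li: it suffices to prove that
$$\beta_X(E):=A_X(E)\cdot(-K_X)^n-\int_0^{\infty}\vol\big(-\pi^*K_X-tE\big)\,\rd t>0$$
for every prime divisor $E$ over $X$, where $n=\dim X$ and $\pi\colon Y\to X$ is a proper birational model on which $E$ is a divisor; equivalently, that $S_X(E)<A_X(E)$, where $S_X(E)=\frac{1}{(-K_X)^n}\int_0^\infty\vol(-\pi^*K_X-tE)\,\rd t$. We may assume $n\geqslant 2$ (the case $n=1$, in which $X\cong\P^1$, is genuinely exceptional). If $E$ is itself a prime divisor on $X$, say $E\sim_{\mathbb Q}c(-K_X)$, then $A_X(E)=1$ and $\vol(-\pi^*K_X-tE)=\max\{0,1-ct\}^n(-K_X)^n$, so $S_X(E)=\frac{1}{c(n+1)}$; since $\frac1cE\sim_{\mathbb Q}-K_X$ is effective, $\alpha(X)\geqslant\frac12$ forces the coefficient $\frac1{2c}$ of $E$ in $\frac{1}{2c}E$ to be at most $1$, i.e.\ $c\geqslant\frac12$, whence $S_X(E)\leqslant\frac{2}{n+1}<1=A_X(E)$. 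So from now on we assume that the center of $E$ on $X$ has codimension $\geqslant 2$.

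Fix $t$ in the range $0\leqslant t<\tau$, where $\tau=\tau_X(E)=\sup\{s:-\pi^*K_X-sE\ \text{is pseudoeffective}\}$, and consider the Nakayama--Zariski ($\sigma$-) decomposition $-\pi^*K_X-tE=P_t+N_t$ with $P_t$ movable and $N_t\geqslant 0$, so that $\vol(-\pi^*K_X-tE)=\vol(P_t)$. Write $N_t=e_tE+N_t'$, where $e_t=\ord_E(N_t)$ and $N_t'$ has no component along $E$. Pushing forward to $X$: $\pi_*P_t$ is a movable class, $\pi_*N_t$ is effective, and since $\mathrm{rk}\,\Cl(X)=1$ we may write $\pi_*P_t\sim_{\mathbb Q}\lambda_t(-K_X)$ and $\pi_*N_t\sim_{\mathbb Q}(1-\lambda_t)(-K_X)$ for a unique $\lambda_t\in[0,1]$. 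The two hypotheses now enter as follows. By the Noether--Fano--Iskovskikh criterion, birational super-rigidity of $X$ is equivalent to the statement that $(X,\tfrac1\mu\mathcal M)$ has canonical singularities for every movable linear system $\mathcal M\subset|-\mu K_X|$ with $\mu\in\mathbb Q_{>0}$; equivalently, $\ord_F(\mathcal M)\leqslant\mu\big(A_X(F)-1\big)$ for every prime divisor $F$ over $X$. Applying this to linear systems on $X$ that have large order along $E$ — produced from the movable class $P_t$ together with the freedom, for each $t<\tau$, to choose an effective $D\sim_{\mathbb Q}-K_X$ with $\ord_E(D)=t$ and to split off its fixed part — bounds the ``movable contribution'' to $\ord_E$ in terms of $A_X(E)-1$. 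On the other hand, $\alpha(X)\geqslant\frac12$ says that $(X,\tfrac12 D)$ is log canonical for every effective $D\sim_{\mathbb Q}-K_X$, i.e.\ $\ord_F(D)\leqslant 2A_X(F)$ for every prime divisor $F$ over $X$; this bounds the remaining ``fixed/non-exceptional contribution'' to $\ord_E$ — in particular $\ord_E(\pi_*N_t)\leqslant2(1-\lambda_t)A_X(E)$. Combining the two, one obtains an inequality of the shape $t\leqslant 2A_X(E)-\lambda_t\big(A_X(E)+1\big)$, and hence both $\tau_X(E)\leqslant 2A_X(E)$ and an upper bound $\lambda_t\leqslant\frac{2A_X(E)-t}{A_X(E)+1}$ on how fast the movable fraction $\lambda_t$ must decay as $t$ grows.

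The crux is to convert these linear-in-$t$ bounds into the strict inequality $S_X(E)<A_X(E)$. The mechanism is that $\vol(-\pi^*K_X-tE)=\vol(P_t)\leqslant\vol_X(\pi_*P_t)=\lambda_t^{\,n}(-K_X)^n$ (using $H^0(Y,mP_t)\hookrightarrow H^0(X,m\pi_*P_t)$ for the effective class $P_t$), so that, after feeding in the decay bound for $\lambda_t$ together with the concavity of $t\mapsto\vol(-\pi^*K_X-tE)^{1/n}$,
$$S_X(E)=\frac{1}{(-K_X)^n}\int_0^{\tau_X(E)}\vol(-\pi^*K_X-tE)\,\rd t$$
is controlled by an explicit integral in $A_X(E)$ and $n$ which is $<A_X(E)$ once $n\geqslant 2$. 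It is here that the value $\alpha(X)=\frac12$ is borderline: the factor $2$ coming from ``$\alpha\geqslant\frac12$'' and the factor $\tfrac{n}{n+1}$ coming from the shape of the volume function are exactly balanced, with the divisorial case above illustrating the extremal behaviour.

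I expect the main obstacle to be precisely this conversion step — extracting a usable decay estimate for $\vol(-\pi^*K_X-tE)$, hence for $\lambda_t$, from the super-rigidity constraint. The difficulty is that super-rigidity is a statement about canonical singularities of movable linear systems \emph{on $X$}, whereas $\vol(-\pi^*K_X-tE)$ and the order $e_t=\ord_E(N_t)$ live over the exceptional divisor $E$, which is contracted by $\pi_*$; bridging this requires careful bookkeeping of the Nakayama--Zariski decomposition — handling the fact that the positive part $P_t$ need not be nef on any single model $Y$ (so one must pass to Fujita approximations, or let $Y$ vary with $t$), tracking the component $e_tE$ of $N_t$ along $E$ itself (which contributes to $\ord_E$ without changing the volume), and controlling the part of $N_t$ that is not $\pi$-exceptional (on which $\alpha(X)\geqslant\frac12$ is what gives the decay of $\lambda_t$). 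All the remaining ingredients — the valuative criterion, the divisorial case, and the one-line reformulations of super-rigidity and of $\alpha(X)\geqslant\frac12$ in terms of $\ord_F$ — are routine.
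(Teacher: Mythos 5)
This theorem is imported from \cite{StibitzZhuang}; the paper you are reading gives no proof of it, so your proposal can only be measured against the argument in that reference and against correctness. Your framework is the right one and matches the opening moves of \cite{StibitzZhuang}: the Fujita--Li valuative criterion, the complete and correct treatment of the case where $E$ is a divisor on $X$ (where $\mathrm{rk}\,\mathrm{Cl}(X)=1$ and $\alpha(X)\geqslant\tfrac12$ force $c\geqslant\tfrac12$ and hence $S_X(E)\leqslant\tfrac{2}{n+1}<1$), and the two reformulations of the hypotheses: $\ord_E(M)\leqslant\lambda(A_X(E)-1)$ for a movable boundary $M\sim_{\mathbb{Q}}-\lambda K_X$ by Noether--Fano, and $\ord_E(N)\leqslant 2(1-\lambda)A_X(E)$ for an effective $N\sim_{\mathbb{Q}}-(1-\lambda)K_X$ by $\alpha(X)\geqslant\tfrac12$.

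The gap, however, is not the ``bookkeeping'' you flag; it is that the inequalities you assemble are quantitatively insufficient. Grant yourself everything you ask for: $\vol(-\pi^*K_X-tE)\leqslant\lambda_t^{\,n}(-K_X)^n$, the combined constraint $t\leqslant 2A-\lambda_t(A+1)$ with $A=A_X(E)$ (hence $\lambda_t\leqslant\frac{2A-t}{A+1}$ and $\tau\leqslant 2A$), and concavity of $\vol^{1/n}$. The resulting upper envelope for $\vol(-\pi^*K_X-tE)/(-K_X)^n$ is $\min\{1,(\tfrac{2A-t}{A+1})^n\}$, whose $n$-th root is already concave, so concavity buys nothing further, and integrating gives
$$
S_X(E)\ \leqslant\ (A-1)+\int_{A-1}^{2A}\Big(\frac{2A-t}{A+1}\Big)^{n}\,\rd t\ =\ (A-1)+\frac{A+1}{n+1},
$$
which is $<A$ if and only if $A_X(E)<n$. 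For divisors with $A_X(E)\geqslant n$ your constraints are perfectly consistent with $\beta_X(E)\leqslant 0$, so no care with the Nakayama--Zariski decomposition can close the argument along this route. The belief that the factor $2$ from $\alpha\geqslant\tfrac12$ and the factor $\tfrac{n}{n+1}$ are ``exactly balanced'' tacitly assumes the two constraints combine multiplicatively, as in Tian's criterion ($S\leqslant\tfrac{n}{n+1}\tau$ and $\tau\leqslant A/\alpha$), but the inequality you actually derive combines them additively. The missing ingredient is precisely the key lemma of \cite{StibitzZhuang}: a criterion showing that canonicity of $(X,M)$ for every movable boundary $M\sim_{\mathbb{Q}}-K_X$ already forces K-semistability, with the fixed parts controlled by a finer (recursive) mechanism rather than by one application of $\alpha(X)\geqslant\tfrac12$ per value of $t$; the hypotheses $\alpha(X)\geqslant\tfrac12$ and $\mathrm{rk}\,\mathrm{Cl}(X)=1$ then serve to handle divisors on $X$ and to exclude the equality case $\beta_X(E)=0$. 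That lemma, not the decomposition bookkeeping, is the heart of the proof, and it is absent from your proposal.
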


This naturally leads to the~question:

\begin{question}[{\cite{StibitzZhuang}}]
\label{question:Stibizt-Zhuang}
Is it true that $\alpha(X)\geqslant\frac{1}{2}$ for any  birationally super-rigid Fano variety $X$?
\end{question}

In this paper we show that the~bound $\frac{1}{2}$ is optimal by proving the~following theorem.

\begin{theorem}
\label{theorem:alpha}
For every $\epsilon>0$, there exists a~singular Fano variety $X$ with terminal singularities such that $\mathrm{rk}\,\mathrm{Cl}(X)=1$,
the variety $X$ is birationally super-rigid, and
$$
\frac{1}{2}\leqslant\alpha(X)\leqslant \frac{1}{2}+\epsilon.
$$
\end{theorem}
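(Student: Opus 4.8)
The plan is to exhibit an explicit family of Fano hypersurfaces (or weighted hypersurfaces) whose birational super-rigidity is known, but whose $\alpha$-invariant can be pushed arbitrarily close to $\tfrac{1}{2}$ from above by a careful choice of parameters. The natural candidates are singular hypersurfaces $X_{d}\subset \P^{n+1}$ (or $\P(1^{n},a,b,\dots)$) of Fano index one passing through prescribed singular points. Birational super-rigidity for such families typically comes from the Pukhlikov/Cheltsov-style exclusion of maximal singularities, or from Zhuang's recent criterion via $\alpha$ and multiplicities, and one would cite the relevant already-existing theorem to get that part for free. The real content is then a two-sided estimate on $\alpha(X)$.

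**First I would** establish the upper bound $\alpha(X)\leqslant \tfrac{1}{2}+\epsilon$ by writing down an explicit effective $\mathbb{Q}$-divisor $D\sim_{\mathbb{Q}} -K_X$ that fails to be log canonical for $\lambda$ slightly larger than $\tfrac{1}{2}$. The standard device is to take $D$ supported on a hyperplane (or coordinate) section tangent to $X$ along a large subvariety, or a section with a point of high multiplicity; at a singular point of $X$ of the right type, the log canonical threshold of such a section computes to something like $\tfrac{1}{2}+O(1/d)$, so letting the degree $d\to\infty$ (or tuning the singularity type) forces $\alpha(X)\to\tfrac{1}{2}^{+}$. Concretely, if $X$ has an ordinary double point $p$ and $H$ is a hyperplane section singular at $p$, then $(X,\lambda H)$ is not log canonical at $p$ once $\lambda\cdot(\text{that mult}) $ exceeds the threshold, giving the bound.

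**The hard part will be** the lower bound $\alpha(X)\geqslant \tfrac{1}{2}$: one must show that for \emph{every} effective $D\sim_{\mathbb{Q}}-K_X$, the pair $(X,\tfrac{1}{2}D)$ is log canonical. I would prove this by the usual dichotomy on the (putative) non-klt center $Z$ of $(X,\lambda D)$ for $\lambda<\tfrac12$: if $Z$ is a point, apply a local intersection/multiplicity inequality (e.g. the Corti-type inequality $\mult_p(D_1\cdot D_2)>4(\lambda^{-1}) $-style bound, or Pukhlikov's $4n^2$-inequality adapted to the singular point) against the known bound on $(-K_X)^{\dim X}$; if $Z$ has positive dimension, slice by general members of $|-K_X|$ and induct, or bound $\deg Z$ against $(-K_X)^{\dim Z}$ to get a contradiction. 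Because $X$ is birationally super-rigid, the absence of certain maximal singularities is already guaranteed, and this is exactly the input that prevents $\alpha$ from dropping below $\tfrac12$ — so the proof of the lower bound will re-use, essentially verbatim, the geometry that underlies super-rigidity, but now tracking the precise constant $\tfrac12$ rather than just $1$.

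**Finally,** I would assemble the three ingredients — super-rigidity from the cited literature, the explicit bad divisor for the upper bound, and the intersection-theoretic argument for the lower bound — and check that the family can genuinely be chosen so that the upper bound is $\tfrac12+\epsilon$ for the prescribed $\epsilon$; this is where one verifies that the singularities imposed on $X$ are mild enough (terminal) and that $\Cl(X)$ really has rank one, which for a general member of a suitable linear system of hypersurfaces follows from a Lefschetz-type argument. The anticipated subtlety is reconciling "singular enough to drag $\alpha$ down to $\tfrac12$" with "still terminal and still birationally super-rigid," which constrains the admissible singularity types quite tightly.
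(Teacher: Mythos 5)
Your high-level strategy has the right shape and matches the paper's in outline: choose a singular Fano variety whose singular point is tuned so that an explicit anticanonical divisor through it has log canonical threshold just above $\frac12$, and combine this with birational super-rigidity and a lower bound on $\alpha$. The paper's concrete choice is the quasi-smooth hypersurface $\{y^2x_1+f_{2a+1}=0\}\subset\P(1^{a+2},a)$ with a single terminal quotient singularity of type $\frac{1}{a}(1,\ldots,1)$; the divisor cut out by $x_1=0$ computes $\alpha(X)\leqslant\frac{a+1}{2a+1}=\frac12+\frac{1}{4a+2}$ (quoted from Kim--Okada--Won), and letting $a\to\infty$ gives the statement. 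So your "upper bound via a bad divisor at a singular point of the right type" is essentially what happens, although the relevant singularity is a cyclic quotient point of unbounded index $a$, not an ordinary double point.

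The genuine gap is the step you propose to get "for free": citing birational super-rigidity from the existing literature. Every family for which super-rigidity is currently known and which you could plausibly cite (smooth hypersurfaces of degree $n+1$ in $\P^{n+1}$, smooth weighted hypersurfaces of degree $2n$ in $\P(1^{n+1},n)$, the $95$ families of Fano threefold weighted hypersurfaces) has $\alpha$ bounded away from $\frac12$ --- the known lower bounds tend to $1$ as the dimension grows, and in dimension three there are only finitely many families, so no limiting process drives $\alpha$ to $\frac12$. Any family realizing $\alpha\to\frac12^{+}$ must carry a singular point of unbounded index, and for exactly such families super-rigidity was open: for the family above it is Question 7.2.3 of Kim--Okada--Won, previously known only for $a=2$. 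Proving it is the entire content of the paper's Section~2: one runs the Noether--Fano machinery from scratch, excludes the quotient point as a maximal center by a Kawamata-type multiplicity estimate on the weighted blow-up together with the structure of the projection from that point, excludes centers of dimension $<\dim X-2$ by Corti's inequality against the small degree $(-K_X)^{\dim X}=\frac{2a+1}{a}<3$ (with a separate surface-restriction argument for points on the contracted divisor), and excludes codimension-two centers by intersection computations on explicit blow-ups. The "anticipated subtlety" you flag at the end --- reconciling a singularity bad enough to drag $\alpha$ down to $\frac12$ with the variety remaining terminal, of class-group rank one, and super-rigid --- is not a checking step; it is the theorem, and your proposal does not supply an argument for it.
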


We also answer a~natural equivariant version of Question~\ref{question:Stibizt-Zhuang}, which can be stated as follows.
Suppose that $\mathrm{rk}\,\mathrm{Cl}^G(X)=1$ for a~finite subgroup $G\subset\mathrm{Aut}(X)$,
so that $X$ is a~$G$-Mori~fibre~space.
Then $X$ is \emph{$G$-birationally rigid} if it is not $G$-birational to other $G$-Mori fibre spaces~\mbox{\cite[\S~3.1.1]{CheltsovShramov}}.
Similarly, the~Fano variety $X$ is said to be $G$-birationally super-rigid if $X$ is $G$-birationally rigid, and $X$ does not have non-biregular $G$-birational selfmaps.
Finally, we let
$$
\alpha_G(X)=\mathrm{sup}\left\{\lambda\in\mathbb{Q}\ \left|\ \aligned
&\text{the pair}\ \left(X, \lambda D\right)\ \text{is log canonical for every}\\
&\text{effective $G$-invariant $\mathbb{Q}$-divisor}\ D\sim_{\mathbb{Q}} -K_{X}\\
\endaligned\right.\right\}.
$$
If $\alpha_G(X)>\frac{\mathrm{dim}(X)}{\mathrm{dim}(X)+1}$, then $X$ is K-polystable by \cite[Corollary~1.3]{Zhuang2020}.

\begin{question}
\label{question:G-invariant}
Is it true that $\alpha_G(X)\geqslant\frac{1}{2}$ for any  $G$-birationally super-rigid Fano variety $X$?
\end{question}

The answer to this question is positive in dimension two:

\begin{exercise}[{\cite{Ch08a,ChSh11,Sakovics}}]
\label{exercise:Sakovich}
If $\mathrm{dim}(X)=2$ and $X$ is $G$-birationally super-rigid, then $\alpha_G(X)\geqslant\frac{2}{3}$.
\end{exercise}

In dimension three we still do not know whether our Question~\ref{question:G-invariant} has a~positive answer or not,
but~many examples suggest that the~answer is probably positive.

\begin{example}[{\cite{ChSh10a,ChSh09b,CheltsovShramov2017}}]
\label{example:Blichfeldt}
Suppose that $X=\mathbb{P}^3$, and let $G$ be any finite subgroup in $\mathrm{Aut}(X)$.
Then $X$ is $G$-birationally super-rigid if and only if the~following four conditions are satisfied
\begin{itemize}
\item[(i)] $X$ does not have $G$-orbits of length $\leqslant 4$;
\item[(ii)] $X$ does not contains $G$-invariant lines;
\item[(iii)] $X$ does not contains $G$-invariant pairs of skew lines;
\item[(iv)] $G$ is not isomorphic to $\mathfrak{A}_5$, $\mathfrak{S}_5$, $\mathrm{PSL}_{2}(\mathbf{F}_7)$, $\mathfrak{A}_6$, $\mumu_2^4\rtimes\mumu_5$ and $\mumu_2^4\rtimes\mathrm{D}_{10}$.
\end{itemize}
Using this criterion and \cite{ChSh11}, we see that $\alpha_G(X)\geqslant\frac{1}{2}$ if $X$ is $G$-birationally super-rigid.
\end{example}

In this paper, we prove that the~answer to Question~\ref{question:G-invariant} is very negative in higher dimensions.

\begin{theorem}
\label{theorem:main}
For every $\epsilon>0$, there is a~smooth Fano variety $X$ and a~finite subgroup~\mbox{$G\subset\mathrm{Aut}(X)$}
such that $\mathrm{rk}\,\mathrm{Pic}^G(X)=1$, the~variety $X$ is $G$-birationally super-rigid, and $\alpha_G(X)<\epsilon$.
\end{theorem}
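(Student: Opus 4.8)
The plan is to construct $X$ as a smooth complete intersection (or cyclic cover) of high dimension carrying a large finite symmetry group $G$ — for instance a smooth hypersurface or Fano complete intersection invariant under a suitable action of a symmetric or monomial group — and to exploit the tension between two requirements: $G$-birational super-rigidity forces the absence of low-degree $G$-invariant subvarieties and the singularities of linear systems to be mild in a $G$-equivariant sense, whereas a small $\alpha_G(X)$ requires producing a single $G$-invariant effective $\mathbb{Q}$-divisor $D\sim_{\mathbb{Q}}-K_X$ whose log canonical threshold is tiny. The key observation is that one can arrange a $G$-invariant \emph{hyperplane-type} section or, better, a highly non-reduced $G$-invariant divisor supported on a small $G$-invariant subscheme (such as the fixed locus of an element of $G$, or a $G$-orbit of coordinate subspaces) so that $D = \frac{1}{m}\sum_{i} m_i H_i$ has a point of multiplicity comparable to $\dim X$; then $\alpha_G(X)\leqslant \lct(X,D)$ can be driven below any prescribed $\epsilon$ by taking $\dim X$ large. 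Simultaneously one checks, via the standard Pukhlikov–Iskovskikh–Manin style exclusion of maximal singularities — but carried out $G$-equivariantly, so that only $G$-invariant (hence high-codimension, by the orbit conditions) centers need to be excluded — that the variety is $G$-birationally super-rigid.

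The steps, in order, are as follows. First I would fix the explicit family: I expect the right model to be a smooth Fano complete intersection $X_{d_1,\dots,d_c}\subset\mathbb{P}^N$ with $\sum d_i = N$ (so $-K_X = \cO_X(1)$) chosen $G$-invariant for $G$ a wreath-type group $\mumu_r \wr H$ acting on the coordinates, or alternatively $X\subset \mathbb{P}^{n+1}$ a Fermat-type hypersurface of degree $n+1$; the symmetry group must be large enough to have no short orbits and no invariant linear or quadratic subvarieties, so that the only $G$-invariant centers of canonical/non-canonical singularities have large codimension. Second, I would prove $G$-birational super-rigidity: apply the Noether–Fano–Iskovskikh inequality to a $G$-invariant mobile linear system $\cM\sim_{\mathbb{Q}} -\mu K_X$ with $\mu\geqslant 1$; by equivariance the maximal singularity center $Z$ is $G$-invariant, hence — using (i)–(iii)-type conditions for our $G$ — has codimension at least some $k\gg 1$, and then the usual multiplicity/intersection estimates (e.g.\ $\mult_Z \cM^2 \leqslant$ the appropriate bound coming from $(-K_X)^{\dim X}$) together with the log canonical inequality in codimension $\geqslant 2$ rule it out; this is essentially the argument of Pukhlikov/Zhuang adapted to invariant cycles, and it goes through precisely because the codimension is forced to be large. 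Third, I would exhibit the bad divisor: take a $G$-orbit $\{L_1,\dots,L_s\}$ of coordinate subspaces of $X$ of small dimension, or the $G$-invariant locus $\{f=0\}$ for an invariant section $f$ of small degree relative to the ambient structure, and form $D=\frac{1}{c}\bigl(\text{divisor of }f^{N}\bigr)$ or a weighted sum normalized to lie in $-K_X$; compute (or bound) $\lct$ at a point of $D$ lying on many of the $L_i$, getting $\lct \leqslant \frac{C}{\dim X}$; conclude $\alpha_G(X)\leqslant\lct(X,D)<\epsilon$ once $\dim X > C/\epsilon$.

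The main obstacle I anticipate is the simultaneous fulfillment of the two conditions: making $G$ rich enough to guarantee $G$-birational super-rigidity (no short orbits, no low-degree invariant subvarieties, the relevant exclusion of maximal singularities) while \emph{also} keeping enough flexibility to produce a genuinely $G$-invariant divisor with very small log canonical threshold — these pull in opposite directions, since a highly symmetric variety tends to have large $\alpha_G$. The resolution I would pursue is dimensional: in high dimension the gap between "codimension of any invariant center is large" (which super-rigidity only needs) and "$\alpha_G$ can be small" (which only needs one invariant divisor that is very singular along a \emph{small}, possibly reducible, invariant subscheme) opens up, because a non-reduced invariant divisor supported on an orbit of linear subspaces of codimension $\asymp \dim X$ can still have multiplicity $\asymp \dim X$ at a suitable point while its support is far from being a maximal-singularity center for mobile systems. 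Verifying the super-rigidity exclusion uniformly in the family, and checking that the constructed $D$ is actually $\mathbb{Q}$-linearly equivalent to $-K_X$ and $G$-invariant with the claimed threshold, are the computational hearts of the argument; I would isolate these as separate lemmas and handle the rest by the standard machinery.
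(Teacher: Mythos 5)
Your overall shape is right (a high-dimensional Fano with a monomial symmetry group, small $\alpha_G$ witnessed by one invariant divisor, equivariant Noether--Fano for rigidity), but the concrete mechanism you propose for making $\alpha_G(X)<\epsilon$ does not work, and fixing it changes the nature of the rigidity proof. You anchor the construction at Fano index one ($\sum d_i=N$, so $-K_X=\mathcal{O}_X(1)$) and hope to get a tiny log canonical threshold from a $G$-invariant $\mathbb{Q}$-divisor $D\sim_{\mathbb{Q}}H$ with a point of multiplicity comparable to $\dim X$. On a smooth index-one complete intersection this is impossible: intersecting $D=\frac{1}{m}D_m$ with a line of $X$ through the point bounds $\mult_P D$ by an absolute constant, and more to the point the whole thrust of Question~\ref{question:Stibizt-Zhuang} is that index-one (super-)rigid varieties have $\alpha$ bounded below. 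The paper's examples are instead complete intersections $X=X_{2d}\cap X_{3d}\cap\cdots\cap X_{rd}\subset\mathbb{P}^n$ of \emph{large} Fano index $q\approx\frac{(r+1)d}{2}$ which nevertheless carry a $G$-invariant divisor of much smaller degree $d$, namely $X_d\vert_X$ (invariant under $G\cong\mumu_d^{n}\rtimes\mathfrak{S}_{n+1}$ even though $X_d$ is not a defining equation). Then $\frac{q}{d}X_d\vert_X\sim_{\mathbb{Q}}-K_X$ has log canonical threshold at most $d/q<\epsilon$ simply because the \emph{coefficient} exceeds $1/\epsilon$ --- no high-multiplicity point is needed. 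The numerical lever is the gap between the index $q$ and the minimal degree $d$ of an invariant divisor, not the dimension per se.

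Once you are forced into large index, the super-rigidity half stops being ``the standard machinery carried out equivariantly.'' Large-index Fanos are never birationally rigid non-equivariantly, so the exclusion of maximal centers must extract everything from $G$-invariance, and a codimension bound on invariant centers plus the usual $\mult_Z\mathcal{M}^2$ estimates is nowhere near enough. The paper's argument rests on a genuinely new invariant-theoretic input (Proposition~\ref{proposition:technical}: the base locus of an $\mathfrak{S}_{n+1}$-invariant linear system of degree $m$ either has dimension $\leqslant m-1$ or is cut out by invariant hypersurfaces of degree $\leqslant m$, proved via the graph-coloring Lemma~\ref{lemma:coloring}), combined with Nadel vanishing to bound the number of components of the non-lc locus, the hypertangent-divisor multiplicity bounds of Lemma~\ref{lemma:hypertangent} along the Fermat strata $X_d\cap\cdots\cap X_{md}$, and, for low-dimensional centers, the $h^0$-comparison of \cite{Zhuang}. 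None of this is visible in your sketch, so even granting the corrected construction, the exclusion step remains a genuine gap rather than a routine verification.
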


Let us describe the~structure of this paper.
In Section~\ref{section:Arman}, we prove Theorem~\ref{theorem:alpha}.
In Section~\ref{section:quadric-threefold}, we study  equivariant birational
geometry of a~smooth quadric threefold $Q\subset\mathbb{P}^4$ for the~natural action of the~symmetric group $\mathfrak{S}_5$,
which should be interesting for mathematicians working on finite subgroups of the~space Cremona group (cf. \cite[\S~9]{Tschinkel}).
This example inspired Theorem~\ref{theorem:main}.
In~Section~\ref{section:preliminary}, we present few results used in the~proof of Theorem~\ref{theorem:main},
which is done in Section~\ref{section:the-proof}.

\medskip
\noindent
\textbf{Acknowledgements.}
Ivan Cheltsov was supported by the~EPSRC grant number~EP/V054597/1.
He~worked on this paper during 2 months stay at Institut des Hautes \'{E}tudes Scientifiques (IH\'{E}S).
Ivan would like to thank the~institute for good working conditions.
Ziquan Zhuang was supported by the~NSF Grant DMS-2240926 and a~Clay research fellowship.

\section{The proof of Theorem~\ref{theorem:alpha}}
\label{section:Arman}

We fix a~positive integer $a\geqslant 2$.
Then we let $X$ be a~quasi-smooth well-formed singular weighted hypersurfaces of degree $2a+1$ in $\mathbb{P}(1^{a+2},a)$ that is given by the~following equation:
$$
y^2x_1+f_{2a+1}(x_1,\ldots,x_{a+2})=0,
$$
where each $x_i$ is a~coordinate of weight $1$, $y$ is a~coordinate of weight $a$, and $f_{2a+1}$ is a~general homogeneous polynomial of degree $2a+1$.
Then
\begin{itemize}
\item $X$ is a~Fano variety of dimension $N=a+1$,
\item the~class group of the~variety $X$ is of rank $1$,
\item the~singularities of $X$ consist of one singular point $O_y=(0:\ldots:0:1)$,
which is a~terminal quotient singularity of type $\frac{1}{a}(1,\ldots,1)$.
\end{itemize}
Further, it follows from \cite{KimOkadaWon2} that
$$
\alpha(X)\leqslant\frac{a+1}{2a+1}=\frac{1}{2}+\frac{1}{4a+2}.
$$
In this section, we prove the~following result, which implies Theorem~\ref{theorem:alpha}.

\begin{theorem}
\label{theorem:Arman}
The Fano variety $X$ is birationally super-rigid.
\end{theorem}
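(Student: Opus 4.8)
The plan is to verify birational super-rigidity of $X$ via the Noether–Fano–Iskovskikh method, following the now-standard strategy for weighted hypersurfaces developed by Pukhlikov and Corti–Pukhlikov–Reid. Concretely, suppose $\phi\colon X\dashrightarrow X'$ is a birational map to a Mori fibre space, and let $\mathcal{M}\subset|{-nK_X}|$ be the movable linear system that is the birational transform of a very ample system on $X'$. If $X$ is not birationally super-rigid, then the pair $(X,\tfrac1n\mathcal{M})$ is not canonical, i.e. there is a divisorial valuation (equivalently, a geometric point or subvariety — the \emph{maximal centre}) $Z\subset X$ along which $\mathcal{M}$ has too high a multiplicity, $\operatorname{mult}_Z\mathcal{M}>n\cdot a(Z;X)+\text{(discrepancy correction)}$. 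The whole proof is then a case analysis over the possible maximal centres, showing each one leads to a contradiction; since $\operatorname{Bir}(X)=\operatorname{Aut}(X)$ for a general such $X$ (there are no obvious birational selfmaps to exploit), ruling out all maximal centres gives super-rigidity outright.

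The key steps, in order: (1) \emph{Set up and geometry of $X$.} Record that $-K_X\sim_{\mathbb{Q}}\mathcal{O}_X(1)$, that $(-K_X)^N=\tfrac{2a+1}{a}$, and pin down the two distinguished loci — the singular point $O_y$ of type $\tfrac1a(1,\dots,1)$, and the hyperplane section $\{x_1=0\}$ which is special because of the $y^2x_1$ term. (2) \emph{Excluding smooth points as maximal centres.} For a general point $p\in X$ one uses the standard multiplicity/degree estimate: through a general point the restriction of $\mathcal{M}$ to a generic low-dimensional linear section, combined with $(-K_X)^N<\text{(the relevant bound)}$, forces $\operatorname{mult}_p\mathcal{M}\leqslant n$. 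Near the special divisor $\{x_1=0\}$ and near curves, one runs the $4n^2$-inequality (Pukhlikov, Corti) or Zhuang's recent refinement to handle infinitely-near points. (3) \emph{Excluding the quotient singular point $O_y$.} This is the delicate arithmetic part: blow up $O_y$ (a weighted blow-up in the chart where $y$ is the vertex), compute the discrepancy $a(E;X)=\tfrac1a(1+\dots+1)-1$ and the order $\operatorname{ord}_E(-K_X)$, and show that the Noether–Fano inequality at $E$ would contradict the global volume bound $(-K_X)^N=\tfrac{2a+1}{a}$. Here the specific shape of the equation $y^2x_1+f_{2a+1}=0$ and general position of $f_{2a+1}$ are used to control $\operatorname{mult}_{O_y}\mathcal{M}$ and the tangent-cone behaviour. (4) \emph{Excluding curves and higher-dimensional centres.} For curves $C\subset X$, compare $2\operatorname{mult}_C\mathcal{M}\cdot(\text{something})$ against $\mathcal{M}^2\cdot H$ for a general $H\in|{-K_X}|$, again using $(-K_X)^N$; the small value of this intersection number is exactly what the degree $2a+1$ and weights $(1^{a+2},a)$ are tuned to provide.

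The main obstacle — and the heart of the argument — is step (3), excluding the terminal quotient singularity $O_y$ as a maximal centre, together with the interplay between $O_y$ and the special hyperplane section $\{x_1=0\}$ passing through it. The presence of the $y^2 x_1$ monomial means that a general member of $|{-K_X}|$ containing $O_y$ degenerates in a controlled way, and one must show that the ``worst case'' multiplicities there still satisfy the Noether–Fano inequality. I expect this to require a careful weighted-blow-up computation at $O_y$ (possibly iterated, to resolve infinitely near centres), an explicit description of the proper transform of $\mathcal{M}$, and the hypersurface-quasi-smoothness input to guarantee that $f_{2a+1}$ is generic enough that no extra components appear. The remaining cases (smooth points, curves) are more routine and follow the templates in \cite{Pukhlikov89,CPR,Zhuang}, with the numerics working precisely because $(-K_X)^N=\tfrac{2a+1}{a}$ is small.
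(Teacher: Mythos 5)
Your overall framework (Noether--Fano, case analysis over maximal centres) matches the paper, but the proposal is a plan rather than a proof, and the two genuinely hard steps are either mis-identified or missing. First, the exclusion of the quotient point $O_y$: you propose to blow up $O_y$ and derive a contradiction between the Noether--Fano inequality at the exceptional divisor $E$ and the global volume $(-K_X)^N=\tfrac{2a+1}{a}$. No such volume argument appears to work for general $a$, and Kawamata's classification of divisorial contractions over $\tfrac1a(1,\dots,1)$ points is only available in dimension $3$ with $a=2$. The actual mechanism is different: the projection $\pi$ from $O_y$ realizes $f^*(-K_X)-\tfrac1aE$ as the pullback $g^*\mathcal{O}_{\mathbb{P}^N}(1)$, which is numerically trivial on the curves contracted by $g$, and these curves sweep out a \emph{divisor} (the proper transform of $D=\{x_1=0,\,f_{2a+1}=0\}$); mobility of $\widetilde{\mathcal{M}}$ then forces $\mu\leqslant\tfrac1a$, while a Kawamata-type inequality (proved by restricting to $E\cong\mathbb{P}^a$) shows that $O_y\in Z$ would force $\mu>\tfrac1a$. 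You gesture at the ``interplay with $\{x_1=0\}$'' but do not identify this contracted divisor, which is the whole point; without it your step (3) has no working argument.

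Second, your treatment of positive-dimensional centres is inadequate in two places. For smooth points lying on $D$, the standard ``cut by general anticanonical divisors through $P$ and apply the $4/\lambda^2$ bound'' argument fails because the linear system of such divisors has the fibre $L$ of $\pi$ through $P$ in its base locus; one must pass to a surface $S\subset\mathbb{P}(1,1,1,a)$, compute $L^2=-2+\tfrac1a$, and apply Corti's inequality to the mobile part. More seriously, the multiplicity-versus-degree count does not exclude centres of codimension $2$: it only shows $\dim Z=\dim X-2$ and $(-K_X)^{N-2}\cdot Z\in\{1,2\}$. The remaining (and main) work is to cut down to a threefold $V\subset\mathbb{P}(1^4,a)$, classify the possible curves $C=V\cap Z$ of degree $1$ or $2$ into explicit normal forms, blow them up, compute $E^3$ in each case, and test the mobile system against the nef divisors $a\beta^*(H)-E$ (resp.\ $2a\beta^*(H)-E$) to get a sign contradiction. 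Your step (4) replaces all of this with a one-line comparison that, as stated, cannot rule these centres out.
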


This theorem also answers positively \cite[Question 7.2.3]{KimOkadaWon2}.

\begin{remark}
\label{remark:a-2}
If $a=2$, then $X$ is known to be birationally super-rigid \cite{CP,CPR}.
\end{remark}

Let $\pi \colon X\dashrightarrow \mathbb{P}^N$ be the~projection from the~point $O_y$. Then $\pi$ contracts the~following divisor:
$$
D=\big\{x_1=0,f_{2a+1}(x_1,\ldots,x_{a+2})=0\big\}\subset\mathbb{P}(1^{a+2},a).
$$
Further, one has the~following diagram:
$$
\xymatrix{
&\widetilde{X}\ar[d]_f\ar[rr]^\nu \ar[drr]^g&& U\ar[d]^\theta \\
&X\ar@{-->}[rr]^\pi && \mathbb{P}^N}
$$
where $f$ is the~weighted blow-up of the~point $O_y$ with weights $(1,\ldots,1)$,
the map $g$ is a~morphism,
the variety $U$ is a~hypersurface in $\mathbb{P}(1^{a+2},a+1)$ of degree $2a+2$ that is given by
$$
z^2+x_1f_{2a+1}(x_1,\ldots,x_{a+2})=0,
$$
the~morphism $\nu$ is a~birational morphism that contracts the~strict transform of the~divisor $D$,
and the~morphism $\theta$ is a~double cover that is branched over the~hypersurface $x_1f_{2a+1}(x_1,\ldots,x_{a+2})=0$.
Here, we consider $x_1,\ldots,x_{a+2}$ as coordinates on $\mathbb{P}^N$ and as coordinates of weight $1$ on the~weighted projective space $\mathbb{P}(1^{a+2},a+1)$,
where $z$ is a~coordinate of weight $a+1$.

Now, let us prove Theorem~\ref{theorem:Arman}. Assume the~contrary, i.e. there exist a~birational map
$$
\Phi\colon X \dashrightarrow W
$$
to a~Mori fibre space $W$ that is not isomorphism. Let $\mathcal{M}$ be a~birational transform of a~very ample complete linear system on $W$ via $\Phi$. Let $\lambda \in \mathbb{Q}_{>0}$ be the~positive rational number such that
$$
K_{X}+\lambda \mathcal{M}\sim_{\mathbb{Q}}0.
$$
Then, by the~Noether-Fano inequality \cite{CortiFactoring}, the~singularities of the~pair $(X,\lambda\M)$ are not canonical.
Let $Z$ be a~center of non-canonical singularities of the~log pair $(X,\lambda\M)$.

Now, let $E$ be the~$f$-exceptional divisor,
and let $\widetilde{\mathcal{M}}$ be the~strict transform of the~mobile linear system $\mathcal{M}$ on the~variety $\widetilde{X}$.
Then $E\cong\mathbb{P}^{a}$, and
\begin{align*}
K_{\widetilde{X}}&\sim_{\mathbb{Q}}f^*\big(K_{X}\big)+\tfrac{1}{a}E,\\
\lambda \widetilde{\mathcal{M}}&\sim_{\mathbb{Q}} f^*\big(\lambda\mathcal{M}\big)-\mu E,
\end{align*}
for some $\mu \in \mathbb{Q}_{\geqslant0}$. Therefore, we have
$$
K_{\widetilde{X}}+\lambda \widetilde{\mathcal{M}}\sim_{\mathbb{Q}}f^*(K_{X}+\lambda \mathcal{M})+\Big(\frac{1}{a}-\mu\Big)E.
$$
Thus, if $\mu>\frac{1}{a}$, then $O_y$ is a~center of non-canonical singularities of the~ log pair $(X,\lambda\M)$.

\begin{lemma}[{cf. \cite{Kawamata96} for $a=2$}]
\label{lemma:Kawamatalike_inequality}
Suppose that $O_y\in Z$. Then $\mu>\frac{1}{a}$.
\end{lemma}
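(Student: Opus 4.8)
The plan is to prove the contrapositive, generalizing Kawamata's argument in the case $a=2$: I will show that if $\mu\leqslant\frac1a$, then every divisor $F$ over $X$ whose centre on $X$ is the point $O_y$ has non-negative discrepancy with respect to $(X,\lambda\M)$. Since a positive-dimensional non-canonical centre through $O_y$ can be excluded separately (see the last paragraph), this forces $(X,\lambda\M)$ to be canonical at $O_y$, contradicting $O_y\in Z$, and hence $\mu>\frac1a$. Two local facts about $f$ enter: first, $\widetilde{X}$ is smooth along $E$, because locally analytically near $E$ it is the total space of a line bundle over $E\cong\mathbb{P}^{a}$; second, from $K_{\widetilde{X}}\sim_{\mathbb{Q}}f^*K_X+\frac1a E$ and adjunction one gets $K_E=(K_{\widetilde{X}}+E)|_E=\frac{a+1}{a}E|_E$, and comparing with $K_E=\mathcal{O}_{\mathbb{P}^{a}}(-a-1)$ yields $E|_E=\mathcal{O}_{\mathbb{P}^{a}}(-a)$.

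Now restrict to $E$. For a general member $\widetilde{M}$ of $\widetilde{\mathcal{M}}$ we have $\widetilde{M}\sim_{\mathbb{Q}}f^*M-\frac{\mu}{\lambda}E$ with $M\in\M$ general, and since $f$ contracts $E$ the class $f^*M$ restricts trivially to $E$, so $\widetilde{M}|_E\sim_{\mathbb{Q}}-\frac{\mu}{\lambda}E|_E=\frac{a\mu}{\lambda}\mathcal{O}_{\mathbb{P}^{a}}(1)$. Hence $\lambda\widetilde{M}|_E$ is an effective $\mathbb{Q}$-divisor on $\mathbb{P}^{a}$ of degree $a\mu\leqslant1$, so its multiplicity at each point is at most $1$, and therefore $(E,\lambda\widetilde{M}|_E)$ is log canonical. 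As $\widetilde{X}$ is smooth along $E$ and $E$ is smooth, inversion of adjunction gives that $(\widetilde{X},\lambda\widetilde{\mathcal{M}}+E)$ is log canonical in a neighbourhood of $E$. Take $F$ as above: its centre on $\widetilde{X}$ lies in $E$, so $\ord_F(E)\geqslant1$, and using $K_{\widetilde{X}}+\lambda\widetilde{\mathcal{M}}+(\mu-\frac1a)E\sim_{\mathbb{Q}}f^*(K_X+\lambda\M)$ together with $1+\frac1a-\mu\geqslant1>0$ we find
$$
a(F;X,\lambda\M)=a\big(F;\widetilde{X},\lambda\widetilde{\mathcal{M}}+E\big)+\Big(1+\tfrac1a-\mu\Big)\ord_F(E)\ \geqslant\ -1+\Big(1+\tfrac1a-\mu\Big)\ =\ \tfrac1a-\mu\ \geqslant\ 0 .
$$
This is exactly what we wanted.

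The step requiring the most care is the local analysis of the weighted blow-up: that $\widetilde{X}$ is smooth along $E$ and that $E|_E=\mathcal{O}_{\mathbb{P}^{a}}(-a)$, which is where the hypothesis that $O_y$ is of type $\frac1a(1,\dots,1)$ is genuinely used. The other point to address is the meaning of $O_y\in Z$: the computation above rules out a non-canonical place centred at the point $O_y$ itself, but not one whose centre on $X$ is a positive-dimensional subvariety through $O_y$. Such centres are eliminated by the standard intersection-theoretic estimates on $\mathcal{M}^2$ (as in Pukhlikov's treatment of curves as potential centres); alternatively, a multiplicity of $\M$ along such a $Z$ large enough to make $Z$ non-canonical restricts to a point of $E\cong\mathbb{P}^{a}$ of multiplicity exceeding $a\mu$, which again yields $\mu>\frac1a$. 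Either way, $O_y\in Z$ implies $\mu>\frac1a$.
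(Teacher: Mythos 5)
Your argument is correct, and its engine is the same as the paper's: everything comes down to the fact that $\lambda\widetilde{\mathcal{M}}\vert_E$ is an effective $\mathbb{Q}$-class of degree $a\mu\leqslant 1$ on $E\cong\mathbb{P}^a$, which is incompatible with any point of $E$ carrying multiplicity greater than $1$. The difference is in packaging. The paper splits into the two cases $Z=O_y$ and $Z\supsetneq O_y$ and in both produces a point $P\in E$ with $\mult_P(\widetilde{\mathcal{M}})>\frac{1}{\lambda}$ (in the first case via the elementary fact that a non-canonical centre at a smooth point forces multiplicity $>1$, in the second by restricting the generic multiplicity along $\widetilde{Z}$ to $E$), then contradicts the degree bound. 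You instead handle the zero-dimensional case by showing $(E,\lambda\widetilde{M}\vert_E)$ is log canonical, invoking inversion of adjunction to get $(\widetilde{X},\lambda\widetilde{\mathcal{M}}+E)$ log canonical near $E$, and running an explicit discrepancy computation for every divisor $F$ over $O_y$; this is a slightly heavier but perfectly valid substitute, and it proves marginally more. One caution about your final paragraph: the positive-dimensional centres through $O_y$ are not an optional afterthought — they are exactly half of what the hypothesis $O_y\in Z$ allows, and the paper devotes its first case to them. Of the two routes you offer, the first (Pukhlikov-style estimates on $\mathcal{M}^2$) is not the right tool near the quotient singularity and is not how the paper proceeds; the second (the multiplicity of $\mathcal{M}$ along $Z$ restricts to a point of $E$ with multiplicity exceeding the degree $a\mu\leqslant 1$ after scaling by $\lambda$) is precisely the paper's argument and should be promoted from an aside to the actual proof of that case.
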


\begin{proof}
Suppose that $\mu\leqslant \frac{1}{a}$. Let us seek for a~contradiction.
\begin{case}{\sc $Z\neq O_y$.}
Let $\widetilde{Z}$ be the~strict transform of $Z$ via~$f$.
Then $\mult_{\widetilde{Z}}(\widetilde{\mathcal{M}})>\frac{1}{\lambda}$ and hence,
\begin{equation}
\label{equation:mult-big}
\mult_{P}\big(\widetilde{\mathcal{M}}\big|_E\big)>\frac{1}{\lambda}
\end{equation}
for any point $P\in \widetilde{Z}\cap E$. Notice that
$$
\lambda \widetilde{\mathcal{M}}\big|_E\sim_{\mathbb{Q}}-\mu E\big|_E\sim_{\mathbb{Q}} a\mu H,
$$
where $H$ is a~hyperplane in $E\cong\mathbb{P}^a$.
Since $a\mu\leqslant 1$, this contradicts to \eqref{equation:mult-big}.
\end{case}
\begin{case}{\sc $Z=O_y$.}
We write
$$
K_{\widetilde{X}}+\lambda \widetilde{\mathcal{M}}+\Big(\mu-\frac{1}{a}\Big)E\sim_{\mathbb{Q}}f^*\big(K_X+\lambda \mathcal{M}\big).
$$
Hence, the~singularities of the~log pair $(\widetilde{X}, \lambda\widetilde{\mathcal{M}} +(\mu-\tfrac{1}{a})E)$ are not canonical at some point $P\in E$.
Then the~singularities of the~log pair $(\widetilde{X}, \lambda\widetilde{\mathcal{M}})$ are also not canonical at $P$, so that
$$
\mathrm{mult}_P\big(\widetilde{\mathcal{M}}\big)>\frac{1}{\lambda}.
$$
Now, we argue as in the~previous case to obtain a~contradiction.
\end{case}
\end{proof}

One the~other hand, we have

\begin{lemma}
\label{lemma:singular_points}
One has $\mu\leqslant\frac{1}{a}$.
\end{lemma}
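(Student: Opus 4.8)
The plan is to exploit the description of $\widetilde{X}$ provided by the diagram, in particular the fact that the birational morphism $g\colon \widetilde{X}\to U$ realizes $\widetilde{X}$ as (essentially) the double cover $\theta\colon U\to\mathbb{P}^N$ of projective space, and to run the ``test surface/curve through the vertex'' argument on $U$ or $\mathbb{P}^N$. Recall that $\lambda\widetilde{\M}\sim_{\mathbb{Q}}f^*(\lambda\M)-\mu E$ with $-K_X\sim_{\mathbb{Q}}\mathcal{O}_X(1)$ (the ample generator), so $\lambda\M\sim_{\mathbb{Q}}-K_X\sim_{\mathbb{Q}}\mathcal{O}_X(1)$ and $\lambda\widetilde{\M}\sim_{\mathbb{Q}} g^*(\text{something})$. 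The key point is that on $U$ the linear system $g_*\widetilde{\M}$ has degree controlled by the embedding $U\subset\mathbb{P}(1^{a+2},a+1)$, and the multiplicity $\mu$ is precisely the order of vanishing of a general member of $\widetilde{\M}$ along $E$, equivalently (after pushing to $\mathbb{P}^N$ via $\theta\circ\nu$) the multiplicity of the corresponding linear system at... — but $E$ is contracted by $\nu$, so rather $\mu$ measures the order of vanishing along the exceptional divisor of $f$, which is the hyperplane class on $E\cong\mathbb{P}^a$.

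Concretely, I would argue as follows. First I would compute the intersection numbers on $\widetilde{X}$: using $K_{\widetilde{X}}\sim_{\mathbb{Q}}f^*K_X+\tfrac1a E$ and $-K_X\sim_{\mathbb{Q}}\mathcal{O}_X(1)$, together with the standard relations for a weighted blow-up of a $\tfrac1a(1,\dots,1)$ point, one gets $E^{a+1}$, $f^*\mathcal{O}_X(1)\cdot E^a$, etc. Then I would take a general member $M\in\widetilde{\M}$ and a general complete intersection curve $C$ of the form $(f^*\mathcal{O}_X(1))^{a-1}\cdot E$ (a line in a general $\mathbb{P}^a$-fibre structure, or rather a curve in $E$), and compute $\widetilde{\M}\cdot C$. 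Since $C\subset E$ and $M|_E\sim_{\mathbb{Q}}\mu\cdot(a H)$ with $H$ the hyperplane in $E\cong\mathbb{P}^a$ (this is exactly the computation $\lambda\widetilde{\M}|_E\sim_{\mathbb{Q}}-\mu E|_E\sim_{\mathbb{Q}}a\mu H$ used in the previous lemma), one reads off the numerical contribution of $\mu$. Matching this against the degree of $\M$ on $X$ — which is fixed by $\lambda\M\sim_{\mathbb{Q}}-K_X$ — produces an inequality; pushing forward through $g$ to $U$ and using that $U$ has at worst the expected singularities (so that positivity of appropriate divisor classes on $U$ holds), the inequality forces $\mu\leqslant\tfrac1a$.

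Alternatively, and perhaps more cleanly, I would use the interpretation via $U$: the morphism $g$ contracts $\widetilde{D}$ (the strict transform of $D$) and the exceptional divisor $E$ maps isomorphically to a divisor in $U$, or vice versa; in any case $\widetilde{\M}$ is the pullback of a linear subsystem of $|\theta^*\mathcal{O}_{\mathbb{P}^N}(\text{deg})|$ on $U$, and $\mu$ is the multiplicity of the corresponding system on $\mathbb{P}^N$ along a linear subspace — and the bound $\mu\leqslant\tfrac1a$ becomes a statement that a mobile linear system of the given degree cannot have too high a multiplicity, which is automatic from $\M$ being mobile and the degree being $1$ in the appropriate normalization. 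The main obstacle I anticipate is bookkeeping the weighted-blow-up intersection theory correctly (the discrepancy $\tfrac1a$ and the factor $a$ relating $E|_E$ to the hyperplane $H$ must be tracked consistently) and verifying that $U$ genuinely has the mild singularities needed so that the relevant divisor class on $U$ is nef — once those are in hand the inequality $\mu\leqslant\tfrac1a$ drops out of a one-line intersection-number comparison. Note that combined with Lemma~\ref{lemma:Kawamatalike_inequality} this already shows $O_y\notin Z$, which is the real content.
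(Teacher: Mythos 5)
Your proposal assembles the right raw materials (the diagram, the relation $g^*\mathcal{O}_{\mathbb{P}^N}(1)\sim_{\mathbb{Q}}f^*(-K_X)-\tfrac{1}{a}E$, mobility of $\widetilde{\mathcal{M}}$), but it never isolates the step that actually produces the inequality, and the concrete test curves you do name cannot work. In your first variant the curves lie inside $E$: for any curve $C\subset E$ one has $f^*(-K_X)\cdot C=0$, hence $\widetilde{M}\cdot C=-\tfrac{\mu}{\lambda}E\cdot C=\tfrac{a\mu}{\lambda}H\cdot C\geqslant 0$ automatically ($H$ the hyperplane class of $E\cong\mathbb{P}^a$), so intersecting with curves in $E$ can never bound $\mu$ from above; worse, your specific class $(f^*\mathcal{O}_X(1))^{a-1}\cdot E$ is numerically trivial, since both $(f^*(-K_X))^{a}\cdot E$ and $(f^*(-K_X))^{a-1}\cdot E^{2}$ vanish. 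In your second variant, the claim that the bound is \emph{automatic from $\mathcal{M}$ being mobile} on $\mathbb{P}^N$ is not an argument: $\widetilde{\mathcal{M}}$ is a strict transform, not a pullback from $U$, and $\mu$ is not the multiplicity of a mobile system along a divisor of $\mathbb{P}^N$, so nothing is ``automatic'' here.

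The missing idea is to test against the curves contracted by $g$ (equivalently, by $\nu$). These curves sweep out the divisor $\widetilde{D}$, the proper transform of $D$, so a general member $\widetilde{M}\in\widetilde{\mathcal{M}}$ does not contain a general such curve $C$, whence $\widetilde{M}\cdot C\geqslant 0$ by mobility. On the other hand, $\bigl(f^*(-K_X)-\tfrac{1}{a}E\bigr)\cdot C=g^*\mathcal{O}_{\mathbb{P}^N}(1)\cdot C=0$ while $E\cdot C>0$ (otherwise $C$ would also be $f$-exceptional, hence contained in $E$, which it is not), so $\lambda\widetilde{M}\cdot C=\bigl(f^*(-K_X)-\mu E\bigr)\cdot C=\bigl(\tfrac{1}{a}-\mu\bigr)E\cdot C<0$ as soon as $\mu>\tfrac{1}{a}$. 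That contradiction is the entire proof: no computation of $E^{a+1}$, no nefness statement on $U$, and no analysis of the singularities of $U$ are needed.
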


\begin{proof}
One has  $g^*(\mathcal{O}_{\mathbb{P}^{N}}(1))\sim_{\mathbb{Q}}f^*(-K_X)-\frac{1}{a}E$. Then
$$
\Big(f^*(-K_X)-\frac{1}{a}E\Big)\cdot C=0,
$$
for any curve $C$ contracted by $g$.
Thus, if $\mu>\frac{1}{a}$, then
$$
\widetilde{M}\cdot C=\frac{1}{\lambda}\big(f^*(-K_X)-\mu E\big)<0
$$
for a~general divisor $\widetilde{M}\in\widetilde{\mathcal{M}}$.
This is a~contradiction, because the~linear system $\widetilde{\mathcal{M}}$ is mobile, and the~curves contracted by $g$ span a~divisor in $\widetilde{X}$ --- the~proper transform of the~divisor $D$.
\end{proof}

\begin{corollary}
\label{corollary:O-not-in-Z}
One has $O_y\notin Z$.
\end{corollary}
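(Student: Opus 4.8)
The plan is simply to play the two preceding lemmas against each other. Assume for contradiction that $O_y\in Z$, i.e.\ that the singular point $O_y$ is a center of non-canonical singularities of the log pair $(X,\lambda\M)$. Then Lemma~\ref{lemma:Kawamatalike_inequality} applies and yields the strict inequality $\mu>\frac{1}{a}$. On the other hand, Lemma~\ref{lemma:singular_points} asserts unconditionally that $\mu\leqslant\frac{1}{a}$. These two statements are incompatible, so the assumption $O_y\in Z$ must be false, and we conclude $O_y\notin Z$.

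There is no genuine obstacle at this step: all the real content has already been extracted, namely the Kawamata-type multiplicity estimate on the exceptional divisor $E\cong\mathbb{P}^a$ (giving one inequality on $\mu$) and the nefness/mobility argument on the curves contracted by $g$ sweeping out the proper transform of $D$ (giving the opposite inequality). The corollary is the formal consequence of their combination, and it sets up the remainder of the argument, in which $Z$ may henceforth be assumed to avoid the singular locus of $X$.
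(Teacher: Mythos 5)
Your proof is correct and is exactly the argument the paper intends: the corollary is stated without proof precisely because it follows immediately by combining Lemma~\ref{lemma:Kawamatalike_inequality} (if $O_y\in Z$ then $\mu>\frac{1}{a}$) with Lemma~\ref{lemma:singular_points} ($\mu\leqslant\frac{1}{a}$). Nothing is missing.
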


Thus, we see that $Z$ is contained in the~smooth locus of the~variety $X$.

\begin{lemma}
\label{lemma:smooth-points}
One has $\mathrm{dim}(Z)=a-1$.
\end{lemma}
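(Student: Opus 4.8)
The plan is to show that $Z$ cannot be a point (nor any subvariety of dimension $\leqslant a-2$), so that the only remaining possibility is a divisor-free center of dimension exactly $a-1$. By Corollary~\ref{corollary:O-not-in-Z} we already know $O_y\notin Z$, so $Z$ lies in the smooth locus, where $-K_X$ restricts to an ample divisor with $(-K_X)^N = \deg X \cdot \tfrac{1}{a}$-type intersection numbers that we can compute explicitly; in fact $(-K_X)^N=\frac{2a+1}{a}$ since $\mathcal{O}_X(1)^N=\frac{1}{a}$ on $\mathbb{P}(1^{a+2},a)$ and $-K_X\sim\mathcal{O}_X(1)$ (the amplitude of the anticanonical being a single copy of the $\mathcal{O}(1)$ because $\sum 1 + a - a - (2a+1) = 1$... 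I would double-check the adjunction bookkeeping here, but the point is $(-K_X)^N$ is a small explicit number). Since $\lambda\mathcal{M}\sim_{\mathbb{Q}}-K_X$, the self-intersection $\mathcal{M}^N$ is controlled.

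First I would invoke the standard multiplicity bounds coming from non-canonicity. If $\dim Z = 0$, i.e. $Z$ is a single smooth point $P$, then for two general members $M_1,M_2\in\mathcal{M}$ one has $\mathrm{mult}_P(M_1\cdot M_2) > \frac{4}{\lambda^2}$ (the "$4n^2$-inequality" of Corti/Pukhlikov, valid because $P$ is a smooth point and $\mathcal{M}$ is mobile), hence $M_1\cdot M_2 \cdot (-K_X)^{N-2} \geqslant \mathrm{mult}_P(M_1\cdot M_2) > \frac{4}{\lambda^2}$, while the left side equals $\frac{1}{\lambda^2}(-K_X)^N$. This forces $(-K_X)^N > 4$, which fails for our $X$ since $(-K_X)^N=\frac{2a+1}{a}\leqslant 2$. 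If instead $1\leqslant \dim Z\leqslant a-2$, I would cut down by $\dim Z$ general members of $|-K_X|$ (which avoid $O_y$ for $|-K_X|$ sufficiently free away from the quotient point, and meet $Z$ in $\deg Z$ points in its smooth locus) to reduce to the case of a $0$-dimensional center on the section, again contradicting $(-K_X)^N>4$ via the same $4n^2$-type estimate. This is essentially the argument of Pukhlikov for codimension-$\geqslant 2$ centers; the place where one must be a little careful is that cutting by hyperplanes preserves non-canonicity of the center only generically, so I would state this as a "connectedness/inversion of adjunction" step and cite the standard reference.

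The only alternative to $\dim Z \in\{0,\ldots,a-2\}$ or $\dim Z=a-1$ is that $Z$ is a divisor, but a divisor cannot be a center of non-canonical singularities of $(X,\lambda\mathcal{M})$ when $\mathcal{M}$ is mobile: its multiplicity along $\mathcal{M}$ would have to exceed $\frac{1}{\lambda}$, contradicting $\lambda\mathcal{M}\sim_{\mathbb{Q}}-K_X$ and mobility (a mobile system has generic multiplicity $0$ along any prime divisor, and any fixed component would make $\lambda\mathcal{M}-(\text{fixed part})$ fail to be $\mathbb{Q}$-linearly equivalent to $-K_X$ with the right coefficients). Hence $\dim Z=a-1=N-2$, which is exactly the assertion. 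The main obstacle I anticipate is getting the anticanonical degree $(-K_X)^N$ exactly right and making sure the $4n^2$-inequality applies verbatim at the smooth point (it does, since the local structure there is that of a smooth variety and the inequality is local and birationally invariant), together with the hyperplane-section reduction for intermediate-dimensional $Z$; both are routine but must be executed with the correct numerics for this weighted hypersurface.
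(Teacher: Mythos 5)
Your overall strategy (take a general point $P$ of $Z$, apply the $4n^2$-inequality, and cut with anticanonical divisors through $P$) is the same as the paper's opening move, but there is a genuine gap: you never confront the case $P\in D$, where $D$ is the divisor contracted by the projection $\pi$ from $O_y$, and this is precisely the case that occupies almost all of the paper's proof. The inequality $M_1\cdot M_2\cdot H_1\cdots H_{N-2}\geqslant (M_1\cdot M_2)_P$ is only valid when the linear subsystem $\mathcal{L}\subset|-K_X|$ of divisors through $P$ has no curves in its base locus near $P$. Here $-K_X\sim\mathcal{O}_X(1)$ and $|\mathcal{O}_X(1)|$ is the pullback of hyperplanes under $\pi$, so every member of $\mathcal{L}$ contains the entire fibre of $\pi$ through $P$; when $P\in D$ that fibre is a curve $L\subset D$, hence $L\subset\mathrm{Bs}(\mathcal{L})$ and the degree count no longer bounds the local multiplicity at $P$. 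The paper handles this by restricting to the surface $S$ cut out by $N-2$ general members of $\mathcal{L}$, computing $L^2=-2+\tfrac{1}{a}$ on $S$, writing $\lambda\mathcal{M}_S=mL+\lambda\Delta$ with $\Delta$ mobile, invoking inversion of adjunction, and then applying Corti's refined inequality $4(1-m)<\lambda^2(\Delta_1\cdot\Delta_2)_P$ to reach the contradiction $0>\frac{(2a-1)(m-1)^2}{a}$. None of this is routine bookkeeping, and your proposal as written would only dispose of the (easy) case $P\notin D$.

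Two smaller points. First, $(-K_X)^N=\frac{2a+1}{a}=2+\frac{1}{a}$, which is greater than $2$ (not $\leqslant 2$ as you wrote), though still less than $4$, so the easy case survives. Second, the reduction ``cut down to a zero-dimensional center'' is unnecessary: the $4n^2$-inequality of \cite{Pukhlikov97} or \cite[Corollary~3.4]{corti} already gives $(M_1\cdot M_2)_P>\frac{4}{\lambda^2}$ at a general point $P$ of any non-canonical center of codimension at least two, which is how the paper treats all dimensions $\dim(Z)<a-1$ uniformly.
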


\begin{proof}
Suppose that $\mathrm{dim}(Z)<a-1$.
Let $M_1$ and $M_2$ be sufficiently general divisors in $\mathcal{M}$,
and let $P$ be a~sufficiently general point in $Z$.
Then
$$
\left(M_1\cdot M_2\right)_P>\frac{4}{\lambda^2}
$$
by \cite{Pukhlikov97} or \cite[Corollary~3.4]{corti}.
Let $\mathcal{L}$ be the~linear subsystem in $|-K_X|$ consisting of all divisors that pass through the~point $P$,
and let $H_1,\ldots, H_{N-2}$ be sufficiently general divisors in the~system~$\mathcal{L}$.
If $P\not\in D$, then the~base locus of $\mathcal{L}$ does not contain curves, which gives
$$
\frac{2a+1}{a\lambda^2}=M_1\cdot M_2 \cdot H_1\cdot\ldots\cdot H_{N-2}\geqslant \left( M_1\cdot M_2\right)_P>\frac{4}{\lambda^2},
$$
which is a~contradiction. Thus, we see that $P\in D$.

Let $L\subset D$ be the~curve containing $P$ that is contracted by $\pi$.
Then $L$ is the~only curve contained in the~base locus of the~linear system $\mathcal{L}$.
After a~linear change of coordinates,
we can assume that
$$
P=(0:0:1:0:\ldots:0:1)
$$
and $H_i=X \cap \{x_{i+3}=0\}$ for $i=1,\ldots, N-2$. Consider the~surface $S$ defined as
$$
S=\bigcap_{i=1}^{N-2} H_i.
$$
We can identify $S$ with a~surface in $\mathbb{P}(1,1,1,a)$ given by
$$
y^2x_1+f_{2a+1}(x_1,x_2,x_3,0,\ldots,0)=0.
$$
Then $L=S\cap \{x_1=x_2=0\}$. Let $\mathcal{M}_S=\mathcal{M}|_S$. Then
$\lambda\mathcal{M}_S=mL+\lambda\Delta$ for some non-negative rational number $m\in \mathbb{Q}_{\geqslant 0}$
and some mobile linear system $\Delta$ on the~surface $S$.
Moreover, applying the~inversion of adjunction \cite[Theorem~5.50]{KoMo98},
we see that $(S,\lambda \mathcal{M}_S)$ is not log canonical at $P$.

Let $H$ be a~general curve in $|\mathcal{O}_S(1)|$,
and let $H_L$ be a~general curve in $|\mathcal{O}_S(1)|$ that contains $L$. Then $H\cdot L=\frac{1}{a}$ and
$$
S\cap H_L= L+R,
$$
where $R$ is a~curve in $S$ such that $L\not\subset\mathrm{Supp}(R)$.
One can check that $L\cdot R=2$ and $H\cdot R=2$. Thus, using $(L+R)\cdot L=H\cdot L=\frac{1}{a}$, we get
$$
L^2=-2+\frac{1}{a},
$$	
which can also be shown using the~subadjunction formula on $S$.

Now, using Corti's inequality \cite[Theorem 3.1]{corti}, we get
\begin{multline*}
4(1-m)<\lambda^2\big(\Delta_1\cdot\Delta_2\big)_P\leqslant \lambda^2\Delta_1\cdot\Delta_2=\\
=\big(H-mL\big)^2 =H^2-2m H\cdot L+m^2 L^2=\frac{2a+1}{a}-\frac{2m}{a}+m^2\left(-2+\frac{1}{a} \right),	
\end{multline*}
which gives
\begin{align*}
0>\frac{(2a-1)(m-1)^2}{a}.
\end{align*}
This is a~contradiction, since $a\geqslant 2$.
\end{proof}

Therefore, we see that $\mathrm{dim}(Z)=\mathrm{dim}(X)-2$. Then
$$
\mathrm{mult}_{Z}\big(\M\big)>\frac{1}{\lambda}.
$$
Let $M_1$ and $M_2$ be general divisors in $\M$. Then
$$
\frac{3}{\lambda^2}>\frac{2a+1}{\lambda^2 a}=\big(-K_X\big)^{N-2}\cdot M_1\cdot M_2
\geqslant\mult^2_Z(\M)\big(-K_X\big)^{N-2}\cdot Z>\frac{1}{\lambda^2} \big(-K_X\big)^{N-2}\cdot Z,
$$
so that $(-K_X)^{N-2}\cdot Z\in\{1,2\}$.

Now, let $H_1,\ldots, H_{N-2}$ be general divisors in $|-K_X|$.
After a~linear change of coordinate system one can assume that $H_i=X \cap \{x_{i+4}=0\}$ for $i=1,\ldots, N-3$.
Let $V$ be the~threefold defined as
$$
V=\bigcap _{i=1}^{N-3} H_i.
$$
Then we can identify $V$ with the~hypersurface in $\mathbb{P}(1^4,a)$ given by
$$
y^2x_1+f_{2a+1}(x_1,\ldots,x_4,0,\ldots,0)=0.
$$
Let $C=V\cap Z$, $\mathcal{M}_{V}= \mathcal{M}|_{V}$,
and let $H$ be a~general surface in $|\mathcal{O}_V(1)|$.
Then
\begin{itemize}
\item $C$ is an irreducible curve such that $C\cdot H\in\{1,2\}$,
\item $C$ is contained in the~smooth locus of the~hypersurface $V$,
\item $C$ is a~center of non-canonical singularities of the~log pair $(V,\lambda \mathcal{M}_{V})$.
\end{itemize}
We set $\mu=\lambda\mathrm{mult}_C(\mathcal{M}_{V})$. Then $\mu>1$.

\begin{lemma}
\label{lemma:C-H-1}
One has $C\cdot H\ne 1$.
\end{lemma}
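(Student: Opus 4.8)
The plan is to use the hypothesis $C\cdot H=1$ to recognize $C$ as a line and then to contradict the non-canonicity estimate $\mu>1$ on a surface section of $V$ through $C$, in the spirit of the surface estimate in the proof of Lemma~\ref{lemma:smooth-points}. First I would identify $C$. As $C$ lies in the smooth locus of $V$, it misses the unique singular point $O_y$, so the projection from $O_y$ — on $V$ this is the map given by the weight-one coordinates $x_1,\dots,x_4$ — restricts to a morphism $\varphi\colon C\to\mathbb{P}^3$; put $\ell=\varphi(C)$. Since $\varphi^*\mathcal{O}_{\mathbb{P}^3}(1)=\mathcal{O}_V(1)|_C$, the projection formula gives $\deg(\varphi)\cdot\deg(\ell)=C\cdot H=1$, so $\ell$ is a line and $\varphi$ is birational; being finite and birational onto the smooth (hence normal) curve $\ell$, it is an isomorphism, so $C\cong\mathbb{P}^1$ and $p_a(C)=0$. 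The hyperplanes of $\mathbb{P}^3$ through $\ell$ form a pencil, which pulls back to a pencil inside $|\mathcal{O}_V(1)|$ of surfaces through $C$; let $T$ be a general member. Then $T$ is normal and smooth along $C$, and, writing $\mathcal{R}$ for the mobile part of $\mathcal{M}_V|_T$, we have $\mathcal{M}_V|_T=mC+\mathcal{R}$ with $m=\mult_C(\mathcal{M}_V|_T)\geqslant\mult_C(\mathcal{M}_V)=\mu/\lambda$, so $m\lambda\geqslant\mu>1$.

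Next I would compute on the surface $T$. As $T\in|\mathcal{O}_V(1)|$ one has $(H|_T)^2=H^3=\frac{2a+1}{a}$; and since $K_V=\mathcal{O}_V(a-3)$, adjunction gives $K_T=(K_V+T)|_T=(a-2)H|_T$, hence $K_T\cdot C=a-2$, so that $(K_T+C)\cdot C=2p_a(C)-2=-2$ forces $C^2_T=-a$. Because $\mathcal{R}\sim_{\mathbb{Q}}\frac1\lambda H|_T-mC$ is a mobile linear system on the normal projective surface $T$, we have $\mathcal{R}^2\geqslant0$, that is,
$$
0\leqslant\lambda^2\mathcal{R}^2=(H|_T)^2-2(m\lambda)(C\cdot H)+(m\lambda)^2\,C^2_T=\frac{2a+1}{a}-2(m\lambda)-a\,(m\lambda)^2 .
$$
But $m\lambda>1$ gives $2(m\lambda)+a(m\lambda)^2>2+a$, while $\frac{2a+1}{a}=2+\frac1a<2+a$ since $a\geqslant2$; this contradiction proves $C\cdot H\neq1$.

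The only structural ingredient is the identification $C\cong\mathbb{P}^1$: it is precisely what $C\cdot H=1$ supplies, and it is what makes $C^2_T=-a$ negative enough to collide with $m\lambda>1$. This identification fails for $C\cdot H=2$ (where $C$ may be a double cover of a line of large arithmetic genus, so that $C^2_T$ need not be negative), which is why that case requires the separate, harder analysis. The point requiring genuine care is the clean decomposition $\mathcal{M}_V|_T=mC+\mathcal{R}$ with $\mathcal{R}$ the \emph{full} mobile part: one must rule out that the fixed part of $\mathcal{M}_V|_T$ picks up, besides $C$, some other base curve of $\mathcal{M}_V$ lying in the base locus $\overline{\varphi^{-1}(\ell)}$ of the pencil, since such a curve could a priori contribute a negative self-intersection to $\mathcal{R}$. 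The remaining points — existence of the pencil, Bertini-type smoothness of a general $T$ along $C$, and the intersection-number bookkeeping — are routine.
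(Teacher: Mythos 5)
Your identification of $C$ is correct and matches what the paper does implicitly: $C\cdot H=1$ forces the projection from $O_y$ to map $C$ isomorphically onto a line, so $C\cong\mathbb{P}^1$ and (in suitable coordinates) $C=\{x_2=x_3=0,\,y+F=0\}$; your normal-bundle bookkeeping ($C^2_T=-a$ together with $\deg N_{T/V}|_C=1$, i.e.\ $\deg N_{C/V}=1-a$) agrees with the paper's computation $E^3=a-1$. However, the issue you flag at the end is a genuine gap, not a routine verification, and it is exactly the issue the paper's argument is built to avoid. The surfaces in $|\mathcal{O}_V(1)|$ through $C$ form only a pencil, whose base locus is the whole curve $\varphi^{-1}(\ell)\cap V$; this contains, besides $C$, a residual curve of degree $\frac{a+1}{a}$ (passing through $O_y$), and nothing prevents components of $\mathrm{Bs}(\mathcal{M}_V)$ from sitting inside it. If the fixed part of $\mathcal{M}_V|_T$ is $mC+\Delta$ with $\Delta\neq0$, then $\mathcal{R}\sim\frac{1}{\lambda}H|_T-mC-\Delta$, and $\mathcal{R}^2\geqslant0$ no longer yields $\bigl(\frac{1}{\lambda}H|_T-mC\bigr)^2\geqslant0$: the cross term $\Delta\cdot C$ and $\Delta^2$ are uncontrolled, so the contradiction evaporates. (Contrast this with Lemma~\ref{lemma:smooth-points}, where the analogous decomposition is legitimate because the auxiliary system $\mathcal{L}$ there has only the single curve $L$ in its base locus.) A secondary point: since $C$ lies in the base locus of the pencil, Bertini does not give smoothness of the general $T$ along all of $C$; isolated singularities of $T$ on $C$ would perturb $C^2_T$ away from $-a$ via the different, so this too needs an argument.

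The paper closes both holes by staying on the threefold: it blows up $C\subset V$, computes $E^3=a-1$ by intersecting the strict transforms of $\{y+F=0\}$, $\{x_2=0\}$, $\{x_3=0\}$, observes that $a\beta^*(H)-E$ is nef (the system spanned by $y+F$ and by $x_2,x_3$ times degree-$(a-1)$ forms has base locus exactly $C$ with multiplicity one), and pairs it with $(\beta^*(H)-\mu E)^2$, which is proportional to the class of the effective cycle $\widetilde M_1\cdot\widetilde M_2$ for general members of the mobile system. Effectivity of that cycle is automatic --- no fixed/mobile decomposition on a surface is needed, so extra base curves of $\mathcal{M}_V$ inside $\varphi^{-1}(\ell)$ are harmless. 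To repair your write-up, either switch to this threefold computation or prove directly that no component of the residual curve lies in $\mathrm{Bs}(\mathcal{M}_V)$ and that $T$ is smooth along $C$; as it stands the proof is incomplete at precisely the step you yourself single out.
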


\begin{proof}
Suppose that $C\cdot H=1$.
We can choose coordinates on $\mathbb{P}(1^4,a)$ such that
$$
C=\big\{x_2=0,x_3=0,y+F(x_1,\ldots, x_4)=0\big\}\subset\mathbb{P}(1^4,a),
$$
where $F(x_1,\ldots x_4)$ is a~homogeneous polynomial of degree $a$. Note that $C\cong\mathbb{P}^1$.

Now, we let $\beta\colon \widetilde{V}\to V$ be the~blow-up of the~curve $C$, and let $E$ be the~$\beta$-exceptional divisor.
We claim that $E^3=a-1$. Indeed, let $\widetilde{S}_1$, $\widetilde{S}_2$, $\widetilde{S}_3$
be the~strict transforms on $\widetilde{V}$ of the~surfaces that are cut out on $V$ by the~equations $y+F(x_1,\ldots, x_4)=0$, $x_2=0$, $x_3=0$, respectively.
Then
$$
0=\widetilde{S}_1\cdot \widetilde{S}_2\cdot\widetilde{S}_3=(a\beta^*(H)-E)\cdot(\beta^*(H)-E)^2= aH^3+(a+2)\beta^*(H)\cdot E^2 - E^3= a-1-E^3,
$$
which gives $E^3=a-1$ as claimed.

Let $\mathcal{M}_{\widetilde{V}}$ be the~strict transform of the~linear system $\mathcal{M}_{V}$ on the~threefold $\widetilde{V}$.
Then
$$
\lambda\mathcal{M}_{\widetilde{V}}\sim_{\mathbb{Q}}\beta^*(H)-\mu E,
$$
One the~other hand, since $\mathcal{M}_{\widetilde{V}}$ is mobile and $a\beta^*(H)-E$ is nef, we get
$$
0\leqslant\big(a\beta^*(H)-E\big)\cdot\big(\beta^*(H)-\mu E\big)^2=aH^3+(2\mu+a\mu ^2)\beta^*(H)\cdot E^2-\mu^2E^3=(\mu -1)^2-2a(\mu^2-1 )<0,
$$
which is a~contradiction.
\end{proof}

Thus, we see that $C\cdot H=2$.
Then we can change coordinates on $\mathbb{P}(1^4,a)$ such that
\begin{itemize}
\item[($\mathrm{A}$)] either
$$
C=\big\{x_4=0,x_1x_2+x_3^2=0,y+F_a(x_1,\ldots, x_4)=0\big\}\subset\mathbb{P}(1^4,a)
$$
for some homogeneous polynomial $F_a(x_1,\ldots, x_4)$ of degree $a$,
\item [($\mathrm{B}$)] or
$$
C=\big\{x_2=0,x_3=0,y^2+F_{2a}(x_1,\ldots, x_4)=0\big\}\subset\mathbb{P}(1^4,a)
$$
for some homogeneous polynomial $F_{2a}(x_1,\ldots, x_4)$ of degree $2a$.
\end{itemize}
In case ($\mathrm{A}$), we have $C\cong\mathbb{P}^1$. In case ($\mathrm{B}$), the~curve $C$ may have singularities.

In both cases, let $\beta\colon \widetilde{V}\to V$ be the~blow-up of the~curve $C$, and let $E$ be the~$\beta$-exceptional divisor.
Then, arguing as in the~proof of Lemma~\ref{lemma:C-H-1}, we get
$$
E^3=\left\{\aligned
&2a-4\ \text{in case ($\mathrm{A}$)},\\
&-2\ \text{in case ($\mathrm{B}$)}.
\endaligned
\right.
$$
Let $\mathcal{M}_{\widetilde{V}}$ be the~strict transform of the~linear system $\mathcal{M}_{V}$ on the~threefold $\widetilde{V}$.
Then
$$
\lambda\mathcal{M}_{\widetilde{V}}\sim_{\mathbb{Q}}\beta^*(H)-\mu E.
$$
Moreover, in case ($\mathrm{A}$), the~divisor $a\beta^*(H)-E$ is nef, so that
$$
\big(a\beta^*(H)-E\big)\cdot\big(\beta^*(H)-\mu E\big)^2\geqslant 0
$$
because $\mathcal{M}_{\widetilde{V}}$ is mobile. But
$$
\big(a\beta^*(H)-E\big)\cdot\big(\beta^*(H)-\mu E\big)^2=aH^3+(2\mu+a\mu ^2)\beta^*(H)\cdot E^2 -\mu^2E^3=(2 \mu -1)^2-2a(2\mu^2-1)<0,
$$
because $\mu>1$. Likewise, in case ($\mathrm{B}$), the~divisor $2a\beta^*(H)-E$ is nef, which gives
\begin{multline*}
0\leqslant\big(2a\beta^*(H)-E\big)\cdot\big(\beta^*(H)-\mu E\big)^2=2aH^3+(2\mu +2a\mu ^2)\beta^*(H)\cdot E^2 -\mu ^2E^3=\\
=-2 (\mu -1) (1 -\mu + 2 a~(1 + \mu))<0.
\end{multline*}
Thus, we get a~contradiction in both cases ($\mathrm{A}$) and ($\mathrm{B}$). This completes the~proof of Theorem~\ref{theorem:Arman}.

\section{$\mathfrak{S}_5$-invariant quadric threefold}
\label{section:quadric-threefold}

Let $Q$ be a~smooth quadric hypersurface in $\mathbb{P}^4$.
We can choose coordinates $x_0,x_1,x_2,x_3,x_4$ on the~projective space $\mathbb{P}^4$ such that $Q$ is given by the~following equation:
$$
\sum_{i=0}^4x_i^2=0.
$$
In particular, we see that $Q$ is faithfully acted on by the~symmetric group $\mathfrak{S}_5$,
which permutes the~coordinates $x_0,x_1,x_2,x_3,x_4$.
Then $\alpha_{\mathfrak{S}_5}(Q)\leqslant\frac{1}{3}$,
because $\mathfrak{S}_5$ leaves invariant the~hyperplane sections of the~quadric $Q$
that is cut out by $x_0+x_1+x_2+x_3+x_4=0$.
In fact, arguing as in \cite{ChSh11}, one can show that $\alpha_{\mathfrak{S}_5}(Q)=\frac{1}{3}$.

Keeping in mind the~results obtained in \cite{ChSh09b},
one can expect that $Q$ is $\mathfrak{S}_5$-birationally rigid.
However, this is not the~case --- the~quadric hypersurface $Q$ contains two $\mathfrak{S}_5$-orbits of length $5$,
and each of them leads to a~$G$-birational transformation of the~quadric into other $\mathfrak{S}_5$-Mori fibre~space.
Namely, let $\Sigma_5$ be a~$\mathfrak{S}_5$-orbit of length $5$ in $X$,
and let $\pi\colon X\rightarrow Q$ be the~blow up~of~this $\mathfrak{S}_5$-orbit.
Then there exists the~following $\mathfrak{S}_5$-equivariant commutative diagram:
$$
\xymatrix{
&U\ar@{-->}[rr]^{\zeta}\ar@{->}[ld]_{\pi}&&W\ar@{->}[rd]^{\phi}&\\
Q\ar@{-->}[rrrr]^{\chi} &&  && Y}
$$
where $\zeta$ is a~small birational map that flops the~proper transforms of $10$ conics that contain three points in $\Sigma_5$,
$\phi$ is a~birational morphism that contracts the~proper transforms of $5$ hyperplane sections of the~quadric $Q$ that pass through four points in $\Sigma_5$,
and $Y$ is a~cubic threefold~in~$\mathbb{P}^4$~such that it has $5$ isolated ordinary double points and $\mathrm{rk}\,\mathrm{Cl}(Y)=1$.
Since $Y$ is a~$\mathfrak{S}_5$-Mori fibre~space,
we see that  $Q$ is not \mbox{$\mathfrak{S}_5$-birationally} rigid.
Note that the~cubic threefold $Y$ is given in $\mathbb{P}^4$ by
$$
x_0x_1x_2+x_0x_1x_3+x_0x_1x_4+x_0x_2x_3+x_0x_2x_4+x_0x_3x_4+x_1x_2x_3+x_1x_2x_4+x_1x_3x_4+x_2x_3x_4=0.
$$
This is not difficult to prove \cite{Avilov2016,Avilov2018}.

The goal of this section is to prove the~following result.

\begin{theorem}
\label{theorem:quadric-cubic}
The only $\mathfrak{S}_5$-Mori fiber spaces that are $\mathfrak{S}_5$-birational to~$Q$ are $Q$ and $Y$.
\end{theorem}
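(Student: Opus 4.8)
The plan is to prove this by the $\mathfrak{S}_5$-equivariant method of maximal singularities, that is, by running the $\mathfrak{S}_5$-Sarkisov program starting from $Q$ and from $Y$. Let $\Psi\colon Q\dashrightarrow W$ be a $\mathfrak{S}_5$-birational map to a $\mathfrak{S}_5$-Mori fibre space $W$, let $\mathcal{M}$ be the $\mathfrak{S}_5$-invariant mobile linear system on $Q$ that is the $\Psi$-transform of a very ample $\mathfrak{S}_5$-linear system on $W$, and write $\mathcal{M}\sim_{\mathbb{Q}}-\tfrac{1}{\lambda}K_{Q}$ with $\lambda\in\mathbb{Q}_{>0}$. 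By the $\mathfrak{S}_5$-equivariant Noether--Fano inequality (cf.\ \cite{CortiFactoring} and \cite[\S3]{CheltsovShramov}), either $\Psi$ is a $\mathfrak{S}_5$-isomorphism, so that $W\cong Q$, or the pair $(Q,\lambda\mathcal{M})$ is not canonical, in which case, after passing to a minimal non-canonical centre, there is a $\mathfrak{S}_5$-orbit $Z\subset Q$ that is a maximal centre of $\mathcal{M}$; and the same dichotomy applies with $Q$ replaced by $Y$. So it is enough to (a) classify the $\mathfrak{S}_5$-maximal centres on $Q$ and on $Y$; (b) show each of them initiates a $\mathfrak{S}_5$-Sarkisov link whose target is again $Q$ or $Y$; and (c) check that untwisting by such a link strictly decreases the Sarkisov degree of $\Psi$. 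Then the procedure terminates and $\Psi$ becomes a $\mathfrak{S}_5$-isomorphism onto $Q$ or onto $Y$, which is the theorem (the two are not $\mathfrak{S}_5$-isomorphic, since $Q$ is smooth and $Y$ is not).

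For step (a) on $Q$ I would first use the $\mathfrak{S}_5$-action on $\mathbb{P}^4$: the only $\mathfrak{S}_5$-fixed, and the only $\mathfrak{A}_5$-fixed, point of $\mathbb{P}^4$ is $(1:1:1:1:1)\notin Q$; the group $\mathfrak{S}_5$ has no subgroup of index $3$ or $4$; and $F_{20}=\mathrm{AGL}_1(\mathbb{F}_5)$ fixes only $(1:1:1:1:1)$ on $\mathbb{P}^4$, so $Q$ has no $\mathfrak{S}_5$-orbit of length $6$. Hence every $\mathfrak{S}_5$-orbit of points of $Q$ has length $\geqslant 5$, and the orbits of length exactly $5$ are precisely $\Sigma_5^{+}$ and $\Sigma_5^{-}$, the orbits of $(\pm2\sqrt{-1}:1:1:1:1)$. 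If $Z$ is an orbit of points of length $\ell$, then restricting $\mathcal{M}$ to a general $S\in|-K_Q|$ through $Z$ and applying the surface multiplicity inequality \cite[Theorem~3.1]{corti} at all $\ell$ points, together with $\mathcal{M}_1\cdot\mathcal{M}_2\cdot S=\tfrac{9}{\lambda^2}H^2\cdot 3H=\tfrac{54}{\lambda^2}$, where $H=\mathcal{O}_Q(1)$ and $H^3=2$, forces $\ell\leqslant 13$; by divisibility the remaining possibilities are $\ell\in\{5,10,12\}$, and the cases $\ell=10$ and $\ell=12$ (the orbits arising from the partitions $3{+}2$ and from the $D_{10}$-invariant conic on $Q$) are excluded by sharper multiplicity estimates and the explicit geometry of the quadric. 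If instead $Z$ is a curve, or a $\mathfrak{S}_5$-orbit of curves, then $\mathrm{mult}_Z(\mathcal{M})>\tfrac1\lambda$ forces $Z\subset\mathrm{Bs}(\mathcal{M})$ and bounds $H\cdot Z$ from above, and one checks that no $\mathfrak{S}_5$-orbit of curves of such small degree lies on $Q$. Thus the only $\mathfrak{S}_5$-maximal centres on $Q$ are $\Sigma_5^{+}$ and $\Sigma_5^{-}$, and each of them initiates exactly the Sarkisov link $\chi\colon Q\dashrightarrow Y$ displayed above --- blow up the orbit, flop the $10$ conics through its triples of points, contract the $5$ hyperplane sections through its quadruples --- so (b) holds on $Q$.

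It then remains to carry out the same analysis on the nodal cubic threefold $Y$, whose singular locus $\mathrm{Sing}(Y)$ is the $\mathfrak{S}_5$-orbit of the five coordinate points of $\mathbb{P}^4$, all ordinary double points. Using the explicit $\mathfrak{S}_5$-geometry of $Y$ --- the orbit of $10$ lines joining pairs of nodes, the orbit of $5$ hyperplane sections through quadruples of nodes, and the second $\mathfrak{S}_5$-orbit of length $5$ on $Y$ --- together with \cite{Avilov2016,Avilov2018} and the analogous multiplicity and degree bounds, one shows that every $\mathfrak{S}_5$-maximal centre on $Y$ either is $\mathrm{Sing}(Y)$, whose link is $\chi^{-1}$ and returns to $Q$, or initiates only a $\mathfrak{S}_5$-birational self-map of $Y$. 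Since $Q$ carries a $\mathfrak{S}_5$-equivariant involution interchanging $\Sigma_5^{+}$ and $\Sigma_5^{-}$, the two links out of $Q$ land on the same $Y$, and conversely every link out of $Y$ has target $Q$ or $Y$. Combining (a)--(c) and arguing by induction on the Sarkisov degree, we conclude $W\cong Q$ or $W\cong Y$, which also rules out conic bundles and del Pezzo fibrations in the $\mathfrak{S}_5$-birational class. The hard part will be step (a)--(b): proving that the list of $\mathfrak{S}_5$-maximal centres is complete --- excluding curve-type centres and the length-$10$ and length-$12$ point-orbits on $Q$, and, more delicately, controlling all centres on the singular threefold $Y$ --- and checking that none of the admissible links produces a $\mathfrak{S}_5$-Mori fibre space other than $Q$ or $Y$.
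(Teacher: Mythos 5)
Your proposal follows essentially the same route as the paper: the equivariant method of maximal singularities, classifying the $\mathfrak{S}_5$-maximal centres on $Q$ and on $Y$ and untwisting via the link $\chi$ (the paper packages the termination argument as Theorem~\ref{theorem:quadric-cubic-technical}, reducing everything to Propositions~\ref{proposition:quadric-maximal-singularities} and~\ref{proposition:cubic-maximal-singularities}, which are exactly your deferred steps (a)--(b)). The techniques you sketch for those steps --- degree bounds on $\mathfrak{S}_5$-invariant curves, Corti's multiplicity inequality at the points of an orbit, and the classification of small orbits --- are the ones the paper actually uses, supplemented by Nadel vanishing to bound the number of isolated non-canonical points.
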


Let us prove Theorem~\ref{theorem:quadric-cubic}.
Let $\iota\in\mathrm{Aut}(Q)$ be the~Galois involution of the~double cover $Q\to\mathbb{P}^3$ given by the~projection from the~point $(1:1:1:1:1)$.
Then $\iota$ commutes with the~$\mathfrak{S}_5$-action~on~$Q$.
It is well-known \cite{CheltsovDuboulozKishimoto,CheltsovSarikyan} that Theorem~\ref{theorem:quadric-cubic} follows from the~following technical result:

\begin{theorem}
\label{theorem:quadric-cubic-technical}
Let $\mathcal{M}_Q$ be any non-empty mobile $\mathfrak{S}_5$-invariant linear system on the~quadric~$Q$,
and let $\mathcal{M}_Y$ and $\mathcal{M}_Y^\prime$ be its proper transform on the~cubic threefolds $Y$ via $\chi$ and $\chi\circ\iota$, respectively.
Choose positive rational numbers $\lambda$, $\mu$, $\mu^\prime$  such that
\begin{align*}
\lambda\mathcal{M}_Q&\sim_{\mathbb{Q}} -K_{Q},\\
\mu\mathcal{M}_Y&\sim_{\mathbb{Q}} -K_{Y},\\
\mu^\prime\mathcal{M}_Y^\prime&\sim_{\mathbb{Q}} -K_{Y^\prime}.
\end{align*}
Then one of the~log pair $(Q,\lambda\mathcal{M}_Q)$, $(Y,\mu\mathcal{M}_Y)$ or $(Y^\prime,\mu^\prime\mathcal{M}^\prime_Y)$ has canonical singularities.
\end{theorem}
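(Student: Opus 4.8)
The plan is to run the standard Sarkisov-type / Noether–Fano argument for the specific network $Q \dashleftarrow U \dashrightarrow W \dashrightarrow Y$, but keeping track of the $\mathfrak{S}_5$-action throughout. Suppose for contradiction that none of the three pairs $(Q,\lambda\mathcal{M}_Q)$, $(Y,\mu\mathcal{M}_Y)$, $(Y',\mu'\mathcal{M}'_Y)$ is canonical. Then in particular $(Q,\lambda\mathcal{M}_Q)$ has a non-canonical center $Z$, which — since $\mathcal{M}_Q$ is $\mathfrak{S}_5$-invariant — we may take to be an $\mathfrak{S}_5$-orbit of subvarieties. First I would classify the possible $Z$ by dimension and degree using the familiar multiplicity/intersection estimates: if $Z$ is a point, then $\mathrm{mult}_Z(\mathcal{M}_Q) > 1/\lambda$, and since two general members $M_1, M_2$ of $\mathcal{M}_Q$ satisfy $M_1\cdot M_2 \cdot H = (-K_Q)^3/\lambda^2 = 2\cdot 8/\lambda^2$ (up to the usual normalization; on a quadric threefold $(-K_Q)^3 = 2\cdot 3^3$-type constant), one bounds the number of points in the orbit and the local intersection excess via the Corti-style inequality $(M_1\cdot M_2)_Z > 4/\lambda^2$; similarly for $Z$ a curve one gets a bound on $\deg Z$. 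The upshot should be that the only surviving non-canonical centers are the two $\mathfrak{S}_5$-orbits $\Sigma_5$ of length $5$ (the centers of $\pi$), and possibly orbits of conics or lines — but $Q$ has no $\mathfrak{S}_5$-invariant curves of small degree other than those attached to $\Sigma_5$, so the geometry of the $\mathfrak{S}_5$-representation on $H^0(\mathbb{P}^4,\mathcal{O}(1))$ (the standard plus trivial) is used to exclude the rest.

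Once the center is pinned down to one of the two orbits $\Sigma_5$, I would pass through the diagram: blow up $\Sigma_5$ to get $\pi\colon U\to Q$, perform the flop $\zeta$ of the ten conics, and contract via $\phi$ to land on the nodal cubic $Y$. Under this the linear system $\mathcal{M}_Q$ transforms to $\mathcal{M}_Y$, and the key point is a discrepancy/untwisting computation showing that the non-canonicity at $\Sigma_5$ is ``used up'' by $\chi$, so that the threshold constant strictly drops: one shows $(Y,\mu\mathcal{M}_Y)$ is ``closer to canonical'' than $(Q,\lambda\mathcal{M}_Q)$ in the sense of the Sarkisov degree (the maximal multiplicity, or the anticanonical degree of the system, decreases). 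Concretely, I would compute how $\mu$ relates to $\lambda$ and to $\mathrm{mult}_{\Sigma_5}(\mathcal{M}_Q)$ using $K_U = \pi^*K_Q + 2\sum E_i$ (exceptional divisors of the blow-up of points in a threefold) and the flop being crepant, then compare $\mathcal{M}_Y \sim -K_Y/\mu$ on the cubic. If $(Y,\mu\mathcal{M}_Y)$ were still non-canonical, its center is again constrained; here the extra input is that $Y$ is a nodal cubic threefold with exactly $5$ nodes forming an $\mathfrak{S}_5$-orbit and $\mathrm{rk}\,\mathrm{Cl}(Y) = 1$, and the birational geometry of such cubics (there is an involution $\chi\circ\iota\circ\chi^{-1}$ coming from the second orbit and from $\iota$) means the only further move goes back to $Q$ or to $Y'$, which by assumption is also non-canonical — and one shows the degree cannot keep dropping, giving the contradiction. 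This is the classical ``finiteness of the Sarkisov chain forces termination at a canonical pair'' mechanism.

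The main obstacle, and the place I expect to spend most of the effort, is the middle step: the explicit analysis of non-canonical centers on the nodal cubic threefold $Y$ and the bookkeeping of how $\lambda$, $\mu$, $\mu'$ and the multiplicities at $\Sigma_5$ interlock through the flop. On the quadric side the estimates are textbook (this is essentially Iskovskikh–Manin/Pukhlikov for $Q^3$, with the $\mathfrak{S}_5$-invariance only \emph{helping} by forcing centers to be orbits), but near the $5$ nodes of $Y$ one must handle the non-$\mathbb{Q}$-factorial-looking local picture (ordinary double points), use inversion of adjunction on a general hyperplane section through a node, and verify that no non-canonical center can ``hide'' at the nodes. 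I would also need to verify carefully that the three moves $Q \rightsquigarrow Q$ (identity-type, ruled out by super-rigidity-style estimates), $Q \rightsquigarrow Y$, and $Q \rightsquigarrow Y'$ exhaust the links out of a non-canonical center, which uses the classification of $\mathfrak{S}_5$-orbits on $Q$ and the explicit equations given for $Y$. A secondary technical nuisance is making all intersection-number normalizations consistent between $Q$ (Fano index $3$) and $Y$ (Fano index $2$), since the constants $2a+1$-style bookkeeping of Section~\ref{section:Arman} is replaced here by the $(-K)^3$ values $54$ for $Q$ and $24$ for the cubic.
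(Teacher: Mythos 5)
Your overall architecture is the paper's: exclude maximal centers of $(Q,\lambda\mathcal{M}_Q)$ everywhere except $\Sigma_5\cup\Sigma_5'$, exclude them on $Y$ and $Y'$ away from the nodes, and then let the explicit link $\chi$ (resp.\ $\chi\circ\iota$) together with the monotonicity of the Sarkisov degree finish the job, exactly as in the cited argument of Cheltsov--Dubouloz--Kishimoto. However, two concrete steps in your sketch would not survive execution. First, the exclusion of point-centers on $Q$ cannot be done by the Corti inequality alone, and it is emphatically not ``textbook Iskovskikh--Manin for $Q^3$'': the quadric threefold is rational and every point of it is a maximal center for some mobile system, so the entire burden is carried by the $\mathfrak{S}_5$-invariance. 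The paper's route is to upgrade ``non-canonical at a smooth point'' to ``$(Q,\tfrac{3}{2}\lambda\mathcal{M}_Q)$ not log canonical there'' (Remark~\ref{remark:Ziquan}), then use inversion of adjunction on the invariant surface $S_2=Q\cap H$ (whose $\alpha_{\mathfrak{S}_5}$ is computed to be $\tfrac32$) to push the centers off $S_2$, and then Nadel vanishing to bound the number of points of the non-lc locus by $h^0(\mathcal{O}_Q(2H))=14$; only after that does the orbit classification reduce to $\Sigma_5,\Sigma_5',\Sigma_{10},\Sigma_{10}'$, and only then does the Corti inequality (applied against a $4$-dimensional subsystem of $|2H|$ through $\Sigma_{10}$) kill the length-$10$ orbits. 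Without the Nadel step your plan has no way to rule out centers supported on large orbits. The same machinery, with the Clebsch cubic $S_3$ and the curves $\mathcal{B}_6$, $\mathcal{L}_{10}$, is what proves the exclusion on $Y$.

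Second, your intention to ``verify that no non-canonical center can hide at the nodes'' of $Y$ via inversion of adjunction on a hyperplane section through a node is aiming at a false statement: non-canonical centers genuinely do occur at the nodes (they are precisely the centers of the inverse link $\chi^{-1}$, and the transform of a general system on $Q$ is non-canonical there). The paper's Proposition~\ref{proposition:cubic-maximal-singularities} deliberately claims canonicity only \emph{away from} $\mathrm{Sing}(Y)$, and non-canonicity at the nodes is absorbed by untwisting back to $Q$ in the degree bookkeeping, not excluded. With these two corrections --- insert the Remark~\ref{remark:Ziquan}/Nadel-vanishing/invariant-surface mechanism into the exclusion steps, and treat the nodes as link centers rather than as points to be excluded --- your proposal becomes the paper's proof.
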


To prove Theorem~\ref{theorem:quadric-cubic-technical}, let us use all notations and assumptions of this theorem.
We must prove that at least one of the~log pair $(Q,\lambda\mathcal{M}_Q)$, $(Y,\mu\mathcal{M}_Y)$ or $(Y,\mu^\prime\mathcal{M}^\prime_Y)$ has canonical singularities.
Set $\Sigma_5^\prime=\iota(\Sigma_5)$. Then $\Sigma_5^\prime$ is the~second $\mathfrak{S}_5$-orbit in the~quadric $Q$.

\begin{remark}
\label{remark:quadric-sigma-5}
Let $G$ be a~stabilizer in $\mathfrak{S}_5$ of a~point in $P\in\Sigma_5\cup\Sigma_5^\prime$.
Then $G\cong\mathfrak{S}_4$ and its induced linear action on the~Zariski tangent space $T_{P}(Q)$ is an irreducible representation.
\end{remark}

Now using this remark, \cite[Lemma~2.4]{ACPS} and \cite[Theorem~3.10]{corti},
we can easily derive the~required assertion from the~following two propositions,
arguing as in the~proof of \cite[Theorem~1.2]{CheltsovDuboulozKishimoto}.

\begin{proposition}
\label{proposition:quadric-maximal-singularities}
The log pair $(Q,\lambda\mathcal{M}_Q)$ is canonical away from $\Sigma_5\cup\Sigma_5^\prime$.
\end{proposition}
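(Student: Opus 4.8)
## Proof proposal for Proposition~\ref{proposition:quadric-maximal-singularities}

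\textbf{Setup and strategy.} The plan is to argue by contradiction: suppose the log pair $(Q,\lambda\mathcal{M}_Q)$ fails to be canonical at some point $P\in Q$ lying outside $\Sigma_5\cup\Sigma_5^\prime$. Since $\mathcal{M}_Q$ is $\mathfrak{S}_5$-invariant, the non-canonical locus is an $\mathfrak{S}_5$-invariant closed subset, so we may replace $P$ by its entire $\mathfrak{S}_5$-orbit $\Sigma$ and by any irreducible component of the center of non-canonical singularities that contains $P$. The first task is therefore to understand the possible $\mathfrak{S}_5$-orbits and $\mathfrak{S}_5$-invariant curves on the smooth quadric threefold $Q\subset\mathbb{P}^4$: one must enumerate, using the representation theory of $\mathfrak{S}_5$ acting by permutation on $x_0,\dots,x_4$ (restricted to $Q=\{\sum x_i^2=0\}$), the short orbits and the low-degree invariant curves, and then exclude each of them as a center of non-canonical singularities. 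The two length-$5$ orbits $\Sigma_5,\Sigma_5^\prime$ are exactly the ones we are \emph{allowed} to leave untreated; every other candidate must be ruled out.

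\textbf{Excluding invariant curves.} If the center $Z$ is a curve, then $\mult_Z(\mathcal{M}_Q)>\frac{1}{\lambda}$. Intersecting two general members $M_1,M_2\in\mathcal{M}_Q$ with a general hyperplane section $H$ of $Q$ gives $M_1\cdot M_2\cdot H = (-K_Q)^2\cdot H / \lambda^2 \cdot(\text{const})$; since $-K_Q\sim_{\mathbb{Q}}3H$ on the quadric threefold and $H^3=2$, one gets a numerical bound forcing $\deg Z$ (with respect to $\mathcal O_Q(1)$) to be very small. An $\mathfrak{S}_5$-invariant curve of such small degree must be a line, a conic, or a (rational normal) cubic, and the combinatorics of the permutation action shows $Q$ has no $\mathfrak{S}_5$-invariant curve of degree $\leqslant 3$ — indeed an invariant curve would have to meet a general invariant hyperplane section in an $\mathfrak{S}_5$-invariant finite set, whose cardinality is at least the minimal index of a subgroup, namely $5$. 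This contradicts the degree bound. (One should double-check the borderline cases $\deg Z\in\{1,2,3\}$ directly against the orbit structure.)

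\textbf{Excluding zero-dimensional centers outside $\Sigma_5\cup\Sigma_5^\prime$.} The harder part is when $Z=\Sigma$ is a finite orbit not equal to $\Sigma_5$ or $\Sigma_5^\prime$. Here $Q$ has further short orbits — orbits of length $10,15,20$, etc. — and for each one must produce a contradiction from a multiplicity-type inequality. The engine is the local statement at a point $P$ of a general such orbit: if $(Q,\lambda\mathcal{M}_Q)$ is not canonical at $P$, then $(M_1\cdot M_2)_P > \frac{4}{\lambda^2}$ by Corti's inequality \cite[Theorem~3.1]{corti} (or \cite[Corollary~3.4]{corti}, \cite{Pukhlikov97}), while summing over the orbit gives $\sum_{P\in\Sigma}(M_1\cdot M_2)_P \leqslant M_1\cdot M_2\cdot(-K_Q)= (-K_Q)^3/\lambda^2 = 54/\lambda^2$. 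Hence $|\Sigma|<\tfrac{54}{4}$, i.e. $|\Sigma|\leqslant 13$; combined with $|\Sigma|\geqslant 5$ and the classification of orbits, the only survivors are the length-$5$ orbits and possibly a length-$10$ orbit. To kill the length-$10$ orbit (the conics through triples of points of $\Sigma_5$ carry such orbits of points, as does $\iota$ of them), one refines the estimate: use that two members of $\mathcal{M}_Q$ through the whole orbit, together with the constraint $\lambda\mathcal{M}_Q\sim_{\mathbb Q}-K_Q=\mathcal O_Q(3)$, force the local intersection numbers to exceed what the global number $54/\lambda^2$ can accommodate once one also subtracts the contributions forced along the invariant conics passing through the orbit; alternatively, blow up the orbit and run the same nef-cone computation used in Lemma~\ref{lemma:C-H-1} and its sequel. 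Finally, Remark~\ref{remark:quadric-sigma-5} (irreducibility of the $\mathfrak{S}_4$-action on the tangent space) is \emph{not} needed here but is what makes $\Sigma_5,\Sigma_5^\prime$ genuinely problematic, consistent with our only claiming canonicity \emph{away} from them.

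\textbf{Main obstacle.} I expect the crux to be the precise enumeration of short $\mathfrak{S}_5$-orbits and invariant curves on $Q$ together with the sharp exclusion of the length-$10$ orbit: the crude counting bound $|\Sigma|\leqslant 13$ is not by itself enough, so one needs either an honest local analysis at a point of that orbit (exploiting that it lies on several invariant conics and on an invariant hyperplane section) or a blow-up computation in the style of the arguments in Section~\ref{section:Arman}. Everything else — the curve case and the large-orbit cases — falls out of elementary intersection theory on the quadric threefold once the orbit geometry is pinned down.
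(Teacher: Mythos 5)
Your overall architecture (first exclude curves, then bound the number of points in a non-canonical orbit, then kill the surviving length-$10$ orbit) matches the paper's, but two of the three steps have genuine gaps. First, the curve case: the crude estimate you invoke, $M_1\cdot M_2\cdot H=18/\lambda^2$ against $\mult_C(\mathcal M_Q)>1/\lambda$, only yields $\deg C<18$, not ``very small,'' and your assertion that $Q$ has no $\mathfrak S_5$-invariant curve of degree $\leqslant 3$ does not finish the job because $Q$ \emph{does} contain an invariant curve of degree $6$, namely the Bring curve $\mathcal B_6=Q\cap H\cap X_3$. The paper closes this gap in two moves you are missing: since every $\mathfrak S_5$-orbit on $Q$ of length $<20$ is one of $\Sigma_5,\Sigma_5',\Sigma_{10},\Sigma_{10}'$, none of which lies on the invariant hyperplane section $S_2=Q\cap\{\sum x_i=0\}$, any invariant curve of degree $<18$ must be contained in $S_2$ (otherwise $C\cap S_2$ would be a too-short invariant set); restricting $\mathcal M_Q$ to the surface $S_2$ then improves the bound to $\deg C<6$, which finally rules out $\mathcal B_6$. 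Without the passage through $S_2$ your degree bound does not exclude the one invariant curve that actually exists.

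Second, the exclusion of $\Sigma_{10}$ (and $\Sigma_{10}'$), which you correctly identify as the crux, is left unresolved: $10\cdot\frac{4}{\lambda^2}=\frac{40}{\lambda^2}<\frac{54}{\lambda^2}$, so your anticanonical test fails, and ``subtract contributions along invariant conics'' or ``blow up the orbit'' is not an argument. The paper's fix is short and you should be able to reconstruct it: the linear system $\mathcal D\subset|2H|$ of quadric sections through $\Sigma_{10}$ has dimension $\geqslant 4$ and base locus containing no curves (again by the orbit/curve classification), so a general $D\in\mathcal D$ gives
$$
\frac{36}{\lambda^2}=D\cdot M_1\cdot M_2\geqslant\sum_{P\in\Sigma_{10}}\bigl(M_1\cdot M_2\bigr)_P>\frac{40}{\lambda^2},
$$
a contradiction; the point is to test against a degree-$2$ section through the orbit rather than the degree-$3$ anticanonical one. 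A further caveat: the Corti-type bound $(M_1\cdot M_2)_P>4/\lambda^2$ is stated for non-\emph{log}-canonical points, so to apply it at a merely non-canonical point the paper first upgrades $(Q,\lambda\mathcal M_Q)$ to the non-log-canonical pair $(Q,\tfrac{3\lambda}{2}\mathcal M_Q)$ via Remark~\ref{remark:Ziquan} and checks (using $\alpha_{\mathfrak S_5}(S_2)=\frac32$ and inversion of adjunction) that the resulting non-lc locus is still a finite set away from $S_2$; your proposal skips this normalization entirely, and also replaces the paper's Nadel-vanishing count $|\Sigma|\leqslant h^0(Q,\mathcal O_Q(2H))=14$ by a direct intersection count, which is fine in spirit but needs the same log-canonicity upgrade to be licit.
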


\begin{proposition}
\label{proposition:cubic-maximal-singularities}
The log pairs $(Y,\mu\mathcal{M}_Y)$ and $(Y,\mu^\prime\mathcal{M}^\prime_Y)$ are canonical away from $\mathrm{Sing}(Y)$.
\end{proposition}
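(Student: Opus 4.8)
The plan is to reduce the statement to a Noether–Fano type estimate on the singular cubic threefold $Y\subset\mathbb{P}^4$, and then run the standard $3$-dimensional multiplicity analysis. First I would recall that $Y$ has $\mathrm{rk}\,\mathrm{Cl}(Y)=1$, that $-K_Y\sim\mathcal{O}_Y(2)$, and that its only singularities are the $5$ ordinary double points, which are terminal and factorial in the analytic-local sense we need. Since $Y$ and $Y^\prime$ are isomorphic as abstract cubics (the involution $\iota$ simply exchanges the two $\mathfrak{S}_5$-orbits of length $5$), it suffices to treat $(Y,\mu\mathcal{M}_Y)$; the argument for $(Y,\mu^\prime\mathcal{M}^\prime_Y)$ is verbatim. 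Suppose for contradiction that the log pair $(Y,\mu\mathcal{M}_Y)$ is not canonical at some subvariety $Z\subset Y$ with $Z\not\subset\mathrm{Sing}(Y)$; replacing $Z$ by its generic point we may assume $Z$ is either a curve or a (non-singular) point of $Y$.

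The next step is to rule out the case $\dim Z=1$ by an intersection-theoretic count. If $Z$ is a curve in the smooth locus, then $\mathrm{mult}_Z(\mathcal{M}_Y)>\frac{1}{\mu}$, so for two general members $M_1,M_2\in\mathcal{M}_Y$ one gets
$$
\frac{1}{\mu^2}\deg Z<\mathrm{mult}_Z^2(\mathcal{M}_Y)\cdot\deg Z\leqslant M_1\cdot M_2\cdot(-K_Y)^0\ \text{-type count}=\frac{1}{\mu^2}(-K_Y)^3\cdot(\text{lower strata}),
$$
more precisely $(-K_Y)\cdot M_1\cdot M_2=\mu^{-2}(-K_Y)^3=\mu^{-2}\cdot 2^3\cdot 3=24\mu^{-2}$ and a general hyperplane section argument bounds $\deg Z$; combined with the classification of low-degree curves on a cubic (lines and conics) this forces $Z$ to be a line or conic lying in the base locus, which one then excludes by exhibiting enough members of $|-K_Y|$ or $|\mathcal{O}_Y(1)|$ through a general point of $Z$ whose common intersection with $M_1\cdot M_2$ is too small — exactly the mechanism used in Lemmas~\ref{lemma:smooth-points}–\ref{lemma:C-H-1} above. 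For the remaining case $\dim Z=0$ with $Z=P$ a smooth point, I would invoke Corti's inequality \cite[Theorem~3.1]{corti} together with \cite{Pukhlikov97}: restricting $\mathcal{M}_Y$ to a general hyperplane section surface (or threefold complete intersection) through $P$ and applying inversion of adjunction, one gets $(M_1\cdot M_2)_P>\frac{4}{\mu^2}$, which contradicts $(-K_Y)\cdot M_1\cdot M_2=24\mu^{-2}$ once one checks that $P$ can be chosen so that the relevant linear subsystem of $|\mathcal{O}_Y(1)|$ has no base curves through $P$ — here the factoriality of $Y$ away from its nodes and the fact that a general point of a cubic threefold lies on no line are what make the count work.

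The main obstacle I expect is the curve case: one has to control which curves of low $\mathcal{O}_Y(1)$-degree can appear as a non-canonical center. On a smooth cubic threefold the relevant list is lines and conics, and the explicit defining equation of $Y$ displayed above (the elementary symmetric cubic) together with the $\mathfrak{S}_5$-symmetry makes it feasible to enumerate these; but one must also verify that the presence of the $5$ nodes does not produce extra low-degree curves through a general point, and that the hyperplane-section surfaces used for inversion of adjunction can be chosen to avoid $\mathrm{Sing}(Y)$ and to be sufficiently general (normal, with the curve $Z$ in their smooth locus). Once these genericity checks are in place, the numerical contradictions are routine. A secondary subtlety is bookkeeping the discrepancy correction coming from the nodes when $Z$ happens to pass through a node — but by hypothesis $Z\not\subset\mathrm{Sing}(Y)$, so $Z$ meets each node in at most finitely many points and a general translate of the auxiliary divisors avoids them, so this does not actually arise in the generic estimate.
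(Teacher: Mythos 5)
Your proposal has a genuine gap in the zero-dimensional case, and it is the heart of the matter. On the cubic $Y$ one has $-K_Y\sim 2H$ with $H=\mathcal{O}_Y(1)$, so for general $M_1,M_2\in\mathcal{M}_Y$ the count is $H\cdot M_1\cdot M_2=\frac{4}{\mu^2}H^3=\frac{12}{\mu^2}$, and the local inequality $(M_1\cdot M_2)_P>\frac{4}{\mu^2}$ at a single smooth point $P$ does \emph{not} contradict it (this mechanism works for quartics, where $-K=H$, but fails for cubics). Indeed, without the $\mathfrak{S}_5$-symmetry the statement is false: a smooth cubic threefold is not birationally super-rigid, and points can be non-canonical centers of mobile systems. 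The paper's proof is forced to exploit the group action throughout: it first excludes non-canonical points on the Clebsch surface $S_3=Y\cap H$ by inversion of adjunction and the computation of $\alpha_{\mathfrak{S}_5}(S_3)$; it then upgrades ``non-canonical'' to ``$(Y,\frac{3\mu}{2}\mathcal{M}_Y)$ not log canonical'' via Remark~\ref{remark:Ziquan}, applies Nadel vanishing to bound the number of such points by $5$, and identifies the offending set with the orbit $\Theta_5$ by the orbit classification (Lemma~\ref{lemma:cubic-G-orbits}); finally it derives the contradiction not from a single point but from three points of $\Theta_5$ at once, using the plane $\Pi$ through them, the residual cubic curve $\mathcal{C}=Y|_{\Pi}$, and a decomposition $M_1\cdot M_2=\epsilon\mathcal{C}+\Omega$. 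None of these steps appears in your outline, and the single-point estimate you propose in their place does not close.

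The curve case in your proposal is also off target. The multiplicity bound $\mult_C(\mathcal{M}_Y)>\frac{1}{\mu}$ only gives $\deg(C)<12$, not ``lines and conics''; what saves the day is the \emph{equivariant} classification (Lemma~\ref{lemma:cubic-curves}): the only $\mathfrak{S}_5$-invariant curves of degree $\leqslant 10$ on $Y$ are the Bring curve $\mathcal{B}_6$ and the union of lines $\mathcal{L}_{10}$, and $\mathcal{L}_{10}$ is then excluded by intersecting a general member of $\mathcal{M}_Y$ with the hyperplane through four nodes, which contains six of the ten lines. Your plan to ``enumerate lines and conics through a general point'' neither produces the right list nor uses the invariance of the non-canonical locus, which is what makes the enumeration finite and tractable. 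Your reduction to a single pair $(Y,\mu\mathcal{M}_Y)$ via the involution $\iota$ is fine, and your instinct to use Corti's inequality and inversion of adjunction is reasonable, but the actual contradictions require the orbit geometry of $\mathfrak{S}_5$ on $Y$ in an essential way.
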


In the~remaining part of this section, we will prove Propositions~\ref{proposition:quadric-maximal-singularities} and \ref{proposition:cubic-maximal-singularities}.
For both proofs, we need the~following technical observation, which improves \cite[Lemma~2.2]{ChSh09b}.

\begin{remark}
\label{remark:Ziquan}
Let $X$ be a~variety with terminal singularities,
let $D$ be an effective $\mathbb{Q}$-Cartier divisor on the~variety $X$,
let~$\varphi\colon\widetilde{X}\to X$ be birational morphism such that $\widetilde{X}$ is normal,
let $\widetilde{D}$~be~the~proper transform on $\widetilde{X}$ of the~divisor $D$,
and let $E_1,\ldots,E_n$ be $\varphi$-exceptional divisors.
Then
$$
K_{\widetilde{X}}+\widetilde{D}+\sum_{i=1}^{n}a(E_i;X,D)E_i\sim_{\mathbb{Q}}\varphi^*\big(K_X+D\big),
$$
where each $a(E_i;X,D)$ is a~rational number known as the~discrepancy of the~pair $(X,D)$ along~$E_i$.
Let $E$ be one of the~$\varphi$-exceptional divisors.
Then
$$
a\big(E;X,D\big)=a(E;X)-\mathrm{ord}_E(D),
$$
where $a(E;X)$  is the~discrepancy of $X$ along $E$.
Let $a=a(E;X)$. If $a(E;X,D)<0$, then
$$
a\Big(E;X,\Big(1+\frac{1}{a}\Big)D\Big)=a(E;X)-\Big(1+\frac{1}{a}\Big)\mathrm{ord}_E(D)
<\mathrm{ord}_E(D)-\Big(1+\frac{1}{a}\Big)\mathrm{ord}_E(D)=-\frac{\mathrm{ord}_E(D)}{a}<-1,
$$
so that the~log pair $(X,(1+\frac{1}{a})D)$ is not log canonical along $\varphi(E)$.
In particular, if $a(E;X,D)<0$ and $\varphi(E)$ is a~smooth point of the~variety $X$,
then the~log pair
$$
\Bigg(X,\frac{\mathrm{dim}(X)}{\mathrm{dim}(X)-1}D\Bigg)
$$
is not log canonical at the~point $\varphi(E)$.
\end{remark}

To prove Proposition \ref{proposition:quadric-maximal-singularities},
we have to present few standard basic facts about the~$\mathfrak{S}_5$-equivariant geometry of the~quadric $Q$.
Observe that $Q$ contains two $\mathfrak{S}_5$-orbits $\Sigma_{10}$ and $\Sigma^\prime_{10}$ of length $10$.

\begin{lemma}
\label{lemma:quadric-G-orbits}
If $\Sigma$ is a~$\mathfrak{S}_5$-orbit in $Q$ with $|\Sigma|<20$,
then $\Sigma$ is one of the~orbits $\Sigma_5$, $\Sigma^\prime_5$, $\Sigma_{10}$, $\Sigma^\prime_{10}$.
\end{lemma}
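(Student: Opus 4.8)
The plan is to reduce the statement to a finite check on subgroups of $\mathfrak{S}_5$ and their fixed loci in $\mathbb{P}^4$. If $\Sigma=\mathfrak{S}_5\cdot P$ with $|\Sigma|<20$, then its stabilizer $H=\mathrm{Stab}_{\mathfrak{S}_5}(P)$ has order $120/|\Sigma|>6$, so $|H|\in\{8,10,12,15,20,24,30,40,60,120\}$. First I would discard the orders that do not occur: $\mathfrak{S}_5$ has no subgroup of index $3$ or $4$ — the action on cosets would be a homomorphism to $\mathfrak{S}_3$ resp.\ $\mathfrak{S}_4$ whose kernel, being normal, lies in $\{1,\mathfrak{A}_5,\mathfrak{S}_5\}$, and none of the three yields a transitive image — which rules out $|H|=40,30$; and there is no subgroup of order $15$, since a group of order $15$ is cyclic while $\mathfrak{S}_5$ has no element of order $15$. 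Up to conjugacy the remaining possibilities are: $\mathfrak{S}_5$; $\mathfrak{A}_5$; the normalizer $\mumu_5\rtimes\mumu_4$ of a Sylow $5$-subgroup; a point-stabilizer $\mathfrak{S}_4$; the two classes $\mathfrak{A}_4$ and $\mathrm{D}_{12}\cong\mathfrak{S}_3\times\mathfrak{S}_2$ of order-$12$ subgroups; the group $\mathrm{D}_{10}$; and a Sylow $2$-subgroup $\mathrm{D}_8$. (The uniqueness of each conjugacy class is standard, following from Sylow theory together with the list of element orders of $\mathfrak{S}_5$.)

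Next I would compute $\mathrm{Fix}_{\mathbb{P}^4}(H)$ for each of these $H$ and intersect with $Q=\{\sum x_i^2=0\}$, using that the permutation representation $\mathbb{C}^5$ of $\mathfrak{S}_5$ is the sum of the trivial representation and the $4$-dimensional standard irreducible (and the analogous decompositions under each $H$). The outcomes are: for $H\in\{\mathfrak{S}_5,\mathfrak{A}_5,\mumu_5\rtimes\mumu_4,\mathrm{D}_{10}\}$ the fixed locus is the single point $(1:1:1:1:1)$, which is not on $Q$; for $H$ equal to a point-stabilizer $\mathfrak{S}_4$, or $\mathfrak{A}_4$, or $\mathrm{D}_8$, the fixed locus is the line $\{(a:a:a:a:b)\}$ (together with one extra isolated point $(1:-1:1:-1:0)$ in the $\mathrm{D}_8$ case), and this line meets $Q$ exactly in $(1:1:1:1:\pm 2i)$; and for $H=\mathrm{D}_{12}=\mathfrak{S}_{\{0,1,2\}}\times\mathfrak{S}_{\{3,4\}}$ the fixed locus is the line $\{(a:a:a:b:b)\}$ (together with the extra point $(0:0:0:1:-1)$), which meets $Q$ exactly in the two points $(1:1:1:c:c)$ with $c^2=-\tfrac{3}{2}$. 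The subtle point here is the difference between the projective and the linear fixed locus: a nontrivial character of $H$ occurring in $\mathbb{C}^5$ contributes an extra isolated projective fixed point, so one must check — as above — that every such extra point, here $(1:-1:1:-1:0)$ and $(0:0:0:1:-1)$, fails the equation $\sum x_i^2=0$.

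Finally I would determine the full stabilizer of each fixed point that does lie on $Q$, hence its orbit. A permutation fixing $(1:1:1:1:2i)$ must fix the unique coordinate of value $2i$, so its stabilizer is exactly $\mathfrak{S}_{\{0,1,2,3\}}$ and the orbit has length $5$; the orbits of $(1:1:1:1:2i)$ and $(1:1:1:1:-2i)$ are distinct (the ratio of the exceptional coordinate to the others is $2i$ in one case and $-2i$ in the other), and they are $\Sigma_5$ and $\Sigma_5'$. Likewise a permutation fixing $(1:1:1:c:c)$ preserves the partition into coordinates of value $1$ and of value $c$, so its stabilizer is exactly $\mathrm{D}_{12}$, the orbit has length $10$, the two values of $c$ give distinct orbits, and these are $\Sigma_{10}$ and $\Sigma_{10}'$. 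In particular, a point with $\mathfrak{A}_4$ or $\mathrm{D}_8$ in its stabilizer already has $\mathfrak{S}_4$ in its stabilizer, so it produces no orbit of length $15$ or $10$; hence the only $\mathfrak{S}_5$-orbits in $Q$ of length $<20$ are $\Sigma_5$, $\Sigma_5'$, $\Sigma_{10}$, $\Sigma_{10}'$. The one step requiring genuine care is the second — computing the fixed loci and their intersections with $Q$ correctly, including the projective-versus-linear subtlety; the rest is bookkeeping.
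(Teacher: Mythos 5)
Your proof is correct and complete. The paper gives no argument here (the proof is ``Left to the reader''), and your stabilizer-by-stabilizer analysis --- ruling out the impossible subgroup orders, computing the projective fixed locus of each remaining conjugacy class, checking which fixed points lie on $Q$, and then identifying the exact stabilizers --- is precisely the standard computation the authors are deferring; in particular you correctly handle the only delicate point, namely the extra isolated projective fixed points coming from nontrivial characters in the $\mathrm{D}_8$ and $\mathfrak{S}_3\times\mathfrak{S}_2$ cases.
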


\begin{proof}
Left to the~reader.
\end{proof}

Let $H=\{x_0+x_1+x_2+x_3+x_4=0\}\subset\mathbb{P}^4$ and $S_2=H\cap Q$.
Then $S_2$ is smooth and $\mathfrak{S}_5$-invariant.
Moreover, the~surface $S_2$ does not contain  $\Sigma_5$, $\Sigma_5^\prime$, $\Sigma_{10}$, $\Sigma^\prime_{10}$.
Let $\mathcal{B}_6$ be the~curve in $Q$ given by
$$
\left\{\aligned
&x_0+x_1+x_2+x_3+x_4=0,\\
&x_0^2+x_1^2+x_2^2+x_3^2+x_4^2=0,\\
&x_0^3+x_1^3+x_2^3+x_3^3+x_4^3=0.\\
\endaligned
\right.
$$
Then $\mathcal{B}_6$ is the~unique smooth curve of genus $4$ that admits an effective action of the~group $\mathfrak{S}_5$,
which is known as the~Bring's curve (see \cite[Remark~5.4.2]{CheltsovShramov}).
Note that $\mathcal{B}_6\subset Q\cap H$.

\begin{lemma}
\label{lemma:quadric-curves}
Let $C$ be a~$\mathfrak{S}_5$-invariant curve in $Q$ such that $\mathrm{deg}(C)\leqslant 6$.
Then $C=\mathcal{B}_6$.
\end{lemma}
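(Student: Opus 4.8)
The plan is to classify $\mathfrak{S}_5$-invariant curves $C\subset Q$ of degree at most $6$ by first bounding the possible degrees and then pinning down the geometry. The key constraint is that $C$ must be a union of $\mathfrak{S}_5$-orbits of irreducible curves, and each such orbit has length dividing $120$; moreover the degree of the orbit is the orbit length times the degree of one component, so small total degree forces either a single invariant irreducible curve or a short orbit of lines/conics. I would begin by ruling out the presence of an invariant or small-orbit line: the lines on $Q$ form a four-dimensional family, and an $\mathfrak{S}_5$-orbit of lines of length $\ell\leqslant 6$ would force $\ell\in\{1,2,3,4,5,6\}$; one checks using the structure of $\mathfrak{S}_5$ (its subgroups of index $\leqslant 6$ are $\mathfrak{S}_4$, $\mathfrak{A}_5$, and the index-$6$ subgroups isomorphic to $\mathfrak{S}_4$ or the normalizer of a $5$-Sylow) that none of these stabilizers fixes a line on the diagonal quadric — this is the kind of representation-theoretic check already used in Remark~\ref{remark:quadric-sigma-5} and Lemma~\ref{lemma:quadric-G-orbits}.

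Next I would treat orbits of conics. A conic spans a plane in $\mathbb{P}^4$, so an orbit of conics of length $\ell$ with $2\ell\leqslant 6$ means $\ell\leqslant 3$; an invariant plane or an orbit of $\leqslant 3$ planes in $\mathbb{P}^4$ again contradicts the irreducibility of the standard $4$-dimensional representation (the permutation representation decomposes as trivial plus standard, and neither piece produces an invariant linear subspace of dimension $2$ or a short orbit thereof). That leaves the case of a single irreducible $\mathfrak{S}_5$-invariant curve $C$ of degree $d\leqslant 6$. Since $\mathfrak{S}_5$ acts faithfully on $C$ (the action on $Q$ is faithful and a nontrivial element fixing $C$ pointwise would be forced by the eigenvalue analysis to fix too much of $\mathbb{P}^4$), and $|\mathfrak{S}_5|=120$, the normalization $\widetilde{C}$ is a smooth curve of genus $g$ carrying a faithful $\mathfrak{S}_5$-action; Hurwitz plus the classification of $\mathfrak{S}_5$-curves of low genus forces $g\geqslant 4$, with $g=4$ realized precisely by the Bring curve $\mathcal{B}_6$ (cited via \cite[Remark~5.4.2]{CheltsovShramov}).

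Finally I would pin down the embedding. A nondegenerate irreducible curve in $\mathbb{P}^4$ has degree $\geqslant 4$; combining $d\leqslant 6$ with $g\geqslant 4$ and the Castelnuovo bound (for a nondegenerate curve of degree $d\le 6$ in $\mathbb{P}^4$ one gets $g\leqslant 2$ unless $d=6$, where $g\le 4$ is attainable only for the canonical curve) forces $d=6$, $g=4$, and $C$ to be a canonically embedded genus-$4$ curve lying on the unique $\mathfrak{S}_5$-invariant quadric, namely $Q$ itself — equivalently $C$ lies in the invariant hyperplane section cut out by $\sum x_i=0$, hence $C=\mathcal{B}_6$ by uniqueness of the Bring curve. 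I expect the main obstacle to be the bookkeeping in the short-orbit cases: one must carefully enumerate subgroups of $\mathfrak{S}_5$ of index $\leqslant 6$ and verify, via the induced action on tangent/ambient spaces, that none stabilizes a line or a plane meeting $Q$ in the required way; but since all of this uses only the irreducibility statement already recorded in Remark~\ref{remark:quadric-sigma-5} together with the orbit-length list of Lemma~\ref{lemma:quadric-G-orbits}, the argument should go through cleanly.
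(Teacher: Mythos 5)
Your route is genuinely different from the paper's, but as written it has three concrete gaps. First, the case enumeration is incomplete: an $\mathfrak{S}_5$-irreducible curve of degree $\leqslant 6$ can also be an orbit of \emph{two irreducible cubics}, with stabilizer $\mathfrak{A}_5$ (the unique index-$2$ subgroup). Since $\mathfrak{A}_5$ has no invariant plane and its only invariant hyperplane is $H=\{\sum x_i=0\}$, such a cubic would be a twisted cubic on $S_2=Q\cap H\cong\mathbb{P}^1\times\mathbb{P}^1$ of type $(1,2)$ or $(2,1)$, and none of the tools you list (orbit lengths, irreducibility of the standard representation) rules this out; you need an extra argument (e.g.\ that the element of $\mathfrak{S}_5$ swapping the rulings normalizes $\mathfrak{A}_5$, so an $\mathfrak{A}_5$-orbit of a $(1,2)$-curve cannot be $\mathfrak{S}_5$-invariant of total degree $6$, or a direct check that $\mathfrak{A}_5$ itself swaps the rulings). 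Second, your Castelnuovo step is misstated: for a nondegenerate irreducible curve of degree $6$ in $\mathbb{P}^4$ the bound is $g\leqslant 2$, not $g\leqslant 4$; the correct use of $g\geqslant 4$ is to force $C$ to be \emph{degenerate}, hence contained in the unique invariant hyperplane $H$, and only then, inside $H\cong\mathbb{P}^3$, does Castelnuovo give $g\leqslant 4$ with equality for the canonical model. Your sentence ``lying on the unique invariant quadric, namely $Q$ itself --- equivalently $C$ lies in the invariant hyperplane section'' is a non sequitur as it stands. Third, abstract uniqueness of Bring's curve does not identify the subvariety $C$ with $\mathcal{B}_6$; you need the uniqueness of the invariant member of $|3H\vert_{S_2}|$ (on $H$ the invariant cubics are spanned by $p_1^3$, $p_1p_2$, $p_3$ in the power sums, and only $p_3$ survives modulo $p_1=p_2=0$), or equivalently that the $\mathfrak{S}_5$-action on $H^0(C,K_C)$ is the standard representation.

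For comparison, the paper's proof bypasses the entire classification with two intersection-theoretic strokes: since every $\mathfrak{S}_5$-orbit in $Q$ of length $<20$ is one of $\Sigma_5$, $\Sigma_5'$, $\Sigma_{10}$, $\Sigma_{10}'$ (Lemma~\ref{lemma:quadric-G-orbits}) and none of these lies in $S_2$, a curve $C\not\subseteq S_2$ of degree $\leqslant 6$ would meet $S_2$ in a nonempty finite invariant set of at most $S_2\cdot C$ points, which is impossible; hence $C\subseteq S_2$, and if $C\neq\mathcal{B}_6$ the same argument applied to the finite set $C\cap\mathcal{B}_6$ (at most $C\cdot\mathcal{B}_6=18$ points) gives the contradiction. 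Your approach can be completed with the repairs above, but it is considerably longer and relies on several classification facts (subgroups of small index, curves with faithful $\mathfrak{S}_5$-action, Castelnuovo theory) that the paper's argument avoids entirely.
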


\begin{proof}
We may assume that $C$ is $\mathfrak{S}_5$-irreducible, i.e. the~symmetric group $\mathfrak{S}_5$ acts transitively on the~set of its irreducible components.
Then $S_2$ contains $C$, since otherwise $|S_2\cap C|\leqslant S_2\cdot C=12$, which contradicts Lemma~\ref{lemma:quadric-G-orbits}.
Thus, if $C\ne\mathcal{B}_6$, then
$$
|C\cap \mathcal{B}_6|\leqslant C\cdot \mathcal{B}_6=18,
$$
which is impossible by Lemma~\ref{lemma:quadric-G-orbits},
since $S_2$ does no contain $\Sigma_5$, $\Sigma_5^\prime$, $\Sigma_{10}$ and $\Sigma^\prime_{10}$.
\end{proof}

\begin{corollary}
\label{corollary:quadric-log-canonical}
The log pair $(Q,\lambda\mathcal{M}_Q)$ has log canonical singularities.
\end{corollary}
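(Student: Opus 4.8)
The plan is to argue by contradiction, using that the $\mathfrak{S}_5$-invariance of $\mathcal{M}_Q$ forces the non-log-canonical locus to be very small. Suppose $(Q,\lambda\mathcal{M}_Q)$ is not log canonical, and let $N$ be its non-log-canonical locus. Since $\mathcal{M}_Q$ is $\mathfrak{S}_5$-invariant, $N$ is a closed $\mathfrak{S}_5$-invariant subset of $Q$, and since $\mathcal{M}_Q$ is mobile (so $\mathrm{mult}_T(\mathcal{M}_Q)=0$ for every prime divisor $T$), $N$ has no divisorial component. Thus $N$ is the union of a $\mathfrak{S}_5$-invariant curve $C$ and a $\mathfrak{S}_5$-invariant finite set $\Sigma$, and it is enough to rule out both.

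First I would dispose of the case $C\neq\varnothing$; here I may assume $C$ is $\mathfrak{S}_5$-irreducible. Let $Z$ be a component of $C$ and $m=\mathrm{mult}_Z(\mathcal{M}_Q)$, which is the same for every component of $C$. Since $Q$ is smooth and $(Q,\lambda\mathcal{M}_Q)$ is not log canonical at the generic point of $Z$, one gets $\lambda m>1$. Taking two general members $M_1,M_2\in\mathcal{M}_Q$ and using $\lambda\mathcal{M}_Q\sim_{\mathbb{Q}}-K_Q=\mathcal{O}_Q(3)$, the effective $1$-cycle $M_1\cdot M_2$ dominates $m^2 C$, so $m^2\deg C\leqslant M_1\cdot M_2\cdot\mathcal{O}_Q(1)=18/\lambda^2$, whence $\deg C<18$. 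Now I exploit the invariance a second time: if $C\not\subseteq S_2$, then $C\cap S_2$ is a non-empty $\mathfrak{S}_5$-invariant finite subset of $S_2$ of length at most $C\cdot S_2=\deg C<20$, which by Lemma~\ref{lemma:quadric-G-orbits} would be a union of some of $\Sigma_5,\Sigma_5',\Sigma_{10},\Sigma_{10}'$ — impossible, since $S_2$ contains none of these. Hence $C\subseteq S_2$. Restricting a general $M\in\mathcal{M}_Q$ to $S_2$, where $M|_{S_2}$ has degree $M\cdot S_2\cdot\mathcal{O}_Q(1)=6/\lambda$ and still contains $mC$, we get $m\deg C\leqslant 6/\lambda$, hence $\deg C<6$. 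Then Lemma~\ref{lemma:quadric-curves} forces $C=\mathcal{B}_6$, contradicting $\deg\mathcal{B}_6=6$.

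It remains to exclude $\Sigma\neq\varnothing$. Restricting to a general surface section through a point $P\in\Sigma$ shows $\lambda\,\mathrm{mult}_P(\mathcal{M}_Q)>1$; taking three general members $M_1,M_2,M_3\in\mathcal{M}_Q$ and comparing the local intersection numbers at the points of $\Sigma$ with $M_1\cdot M_2\cdot M_3=54/\lambda^3$ bounds $|\Sigma|$ well below $20$, so by Lemma~\ref{lemma:quadric-G-orbits} we may assume $\Sigma$ is one of $\Sigma_5,\Sigma_5',\Sigma_{10},\Sigma_{10}'$. Each of these is excluded by a local analysis at a point $P\in\Sigma$: blowing up $P$ and combining Remark~\ref{remark:Ziquan} with adjunction, failure of log canonicity at $P$ produces a non-log-canonical $G_P$-invariant pair on the exceptional $\mathbb{P}^2=\mathbb{P}(T_PQ)$ whose boundary is a linear system of the small degree $\mathrm{mult}_P(\mathcal{M}_Q)$, and the structure of the $G_P$-action on $T_PQ$ — irreducible when $P\in\Sigma_5\cup\Sigma_5'$, by Remark~\ref{remark:quadric-sigma-5}, so that $\mathbb{P}^2$ carries no $G_P$-invariant line and no short $G_P$-orbit — makes such a pair impossible.

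I expect the main obstacle to be the curve case, and specifically the step of pushing the cheap bound $\deg C<18$ down into the range $\deg C\leqslant 6$ where Lemma~\ref{lemma:quadric-curves} applies. This is exactly where the $\mathfrak{S}_5$-invariance must be used twice — first to drag the bad curve onto $S_2$ via the classification of short orbits, then to bound its degree by restricting $\mathcal{M}_Q$ to the quadric surface $S_2$ — and the intersection-theoretic bookkeeping there (in particular $\deg(M|_{S_2})=6/\lambda$ and the fact that restriction to $S_2$ does not decrease $\mathrm{mult}_Z$) is the crux that must be handled carefully.
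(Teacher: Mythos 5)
Your treatment of the curve case is correct, though it takes a longer route than the paper: you use only the multiplicity bound $\mathrm{mult}_C(\mathcal{M}_Q)>1/\lambda$ and compensate by first forcing $C\subseteq S_2$ via Lemma~\ref{lemma:quadric-G-orbits} and then restricting a general member to $S_2$ to reach $\deg(C)<6$. The paper instead applies Corti's inequality $(M_1\cdot M_2)_C>4/\lambda^2$ from \cite[Theorem~3.1]{corti} directly to the non-log-canonical curve, which together with $\deg(M_1\cdot M_2)=18/\lambda^2$ gives $\deg(C)<9/2$ in one step and makes the detour through $S_2$ unnecessary; that detour is exactly what the paper does use later in the proof of Proposition~\ref{proposition:quadric-maximal-singularities}, where only non-canonicity (hence only the weaker multiplicity bound) is available. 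So this half of your argument is sound.

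The zero-dimensional case is where your proposal has a genuine gap. First, the counting step fails: the $0$-cycle $M_1\cdot M_2\cdot M_3$ need not be proper at points lying on base curves of $\mathcal{M}_Q$ (you have excluded curves from the non-log-canonical locus, not from $\mathrm{Bs}(\mathcal{M}_Q)$), and even granting properness, $\mathrm{mult}_P(M_i)>1/\lambda$ only yields $|\Sigma|<54$, which is not below $20$, so Lemma~\ref{lemma:quadric-G-orbits} cannot yet be invoked. Second, even if $\Sigma$ were known to be one of the four short orbits, the proposed exclusion is only a gesture: Remark~\ref{remark:quadric-sigma-5} gives irreducibility of the tangent representation only for points of $\Sigma_5\cup\Sigma_5'$ and says nothing about $\Sigma_{10}$ or $\Sigma_{10}'$, and in any case the absence of an invariant line in the exceptional $\mathbb{P}^2$ does not by itself preclude a non-log-canonical invariant pair there --- one would still need a degree bound on the restricted system, i.e.\ genuinely new intersection-theoretic input. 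The paper disposes of the point case in one line: since $-(K_Q+\lambda\mathcal{M}_Q)$ is numerically trivial and becomes ample after an arbitrarily small decrease of $\lambda$ that does not change the non-log-canonical locus, the Koll\'ar--Shokurov connectedness theorem forces a finite non-log-canonical locus to be a single point, hence a $\mathfrak{S}_5$-fixed point of $Q$, and no such point exists by Lemma~\ref{lemma:quadric-G-orbits}. You should replace your point-case analysis with this connectedness argument (or with the equivalent Nadel-vanishing count used in the proof of Proposition~\ref{proposition:quadric-maximal-singularities}).
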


\begin{proof}
Suppose that the~log pair $(Q,\lambda\mathcal{M}_Q)$ is not log canonical. Let us seek for a~contradiction.
If~the log pair $(Q,\lambda\mathcal{M}_Q)$ is log canonical outside of finitely many points,
then it is log canonical outside of a~single point by the~Koll\'ar--Shokurov connectedness,
which must be $\mathfrak{S}_5$-invariant point.
The latter contradicts Lemma~\ref{lemma:quadric-G-orbits}.
Thus, we see that there is a~$\mathfrak{S}_5$-irreducible curve $C$
such that the~log pair $(Q,\lambda\mathcal{M}_Q)$ is not log canonical at general points of its irreducible components.
Then
$$
\big(M_1\cdot M_2\big)_C>\frac{4}{\lambda^2}
$$
by \cite[Theorem 3.1]{corti}, where $M_1$ and $M_2$ are general surfaces in $\mathcal{M}_Q$.
Using this, we get $\mathrm{deg}(C)<\frac{9}{2}$, which is impossible by Lemma \ref{lemma:quadric-curves}.
\end{proof}

Observe that $S_2\cong\mathbb{P}^1\times\mathbb{P}^2$, and the~induced $\mathfrak{S}_5$-action on $S_2$ is faithful.

\begin{lemma}[{cf. \cite[Theorem~7.5]{CheltsovWilson}}]
\label{lemma:quadric-alpha}
One has $\alpha_{\mathfrak{S}_5}(S_2)=\frac{3}{2}$.
\end{lemma}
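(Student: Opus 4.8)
The plan is to exhibit one extremal $\mathfrak{S}_5$-invariant divisor for the upper bound, and to combine a degree estimate with inversion of adjunction for the lower bound.

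\textbf{Upper bound.} On $S_2$ one has $-K_{S_2}=\mathcal{O}_{S_2}(2)$ and $\mathcal{O}_{S_2}(1)^2=2$, while the Bring curve $\mathcal{B}_6\subset S_2$ is cut out on $S_2=\{x_0+\cdots+x_4=0\}\cap\{x_0^2+\cdots+x_4^2=0\}$ by the equation $x_0^3+\cdots+x_4^3=0$, so $\mathcal{B}_6\in|\mathcal{O}_{S_2}(3)|$. Hence $\tfrac23\mathcal{B}_6$ is an effective $\mathfrak{S}_5$-invariant $\mathbb{Q}$-divisor with $\tfrac23\mathcal{B}_6\sim_{\mathbb{Q}}-K_{S_2}$, and since $\mathcal{B}_6$ is smooth we get $\mathrm{lct}\big(S_2,\tfrac23\mathcal{B}_6\big)=\tfrac32\,\mathrm{lct}(S_2,\mathcal{B}_6)=\tfrac32$. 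Therefore $\alpha_{\mathfrak{S}_5}(S_2)\leqslant\tfrac32$.

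\textbf{Lower bound.} Assume there is an effective $\mathfrak{S}_5$-invariant $\mathbb{Q}$-divisor $D\sim_{\mathbb{Q}}-K_{S_2}$ with $(S_2,\tfrac32 D)$ not log canonical; note $D\cdot\mathcal{O}_{S_2}(1)=4$. The non-log-canonical locus is a nonempty $\mathfrak{S}_5$-invariant closed set. If it has a one-dimensional component, that component lies in an $\mathfrak{S}_5$-invariant curve all of whose components occur in $D$ with coefficient $>\tfrac23$; comparing degrees with $D\cdot\mathcal{O}_{S_2}(1)=4$ forces this invariant curve to have degree $<6$, which is impossible by Lemma~\ref{lemma:quadric-curves}. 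Otherwise the locus is finite and contains an $\mathfrak{S}_5$-orbit $\Sigma$; since by Lemma~\ref{lemma:quadric-G-orbits} the only $\mathfrak{S}_5$-orbits in $Q$ of length $<20$ are $\Sigma_5,\Sigma_5^\prime,\Sigma_{10},\Sigma_{10}^\prime$, none of which lies on $S_2$, we have $|\Sigma|\geqslant 20$, and $\mathrm{mult}_P(D)>\tfrac23$ for every $P\in\Sigma$ because $\Sigma$ lies in the smooth locus of $S_2$. To rule this out I would restrict $D$ to a smooth $\mathfrak{S}_5$-invariant curve $C\in|\mathcal{O}_{S_2}(d)|$ through $\Sigma$, which (by the same degree bound) we may take not to be a component of $D$. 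Then $D|_C$ is an effective $\mathfrak{S}_5$-invariant divisor on $C$ of degree $D\cdot C=4d$, so its coefficient along the orbit $\Sigma$ is at most $\tfrac{4d}{|\Sigma|}$; meanwhile, applying inversion of adjunction to $(S_2,C+\tfrac32 D)\geqslant(S_2,\tfrac32 D)$ shows that $(C,\tfrac32 D|_C)$ is not log canonical along $\Sigma$, i.e. this coefficient exceeds $\tfrac23$. Hence $|\Sigma|<6d$. For $\Sigma\subset\mathcal{B}_6$ one takes $C=\mathcal{B}_6$ ($d=3$): since every $\mathfrak{S}_5$-orbit on $\mathcal{B}_6$ has length $\geqslant 24$ (a Riemann--Hurwitz computation, using $\mathcal{B}_6/\mathfrak{S}_5\cong\mathbb{P}^1$), this gives $24\leqslant|\Sigma|<18$, a contradiction; orbits not on $\mathcal{B}_6$ are handled by the analogous invariant curves $\{x_0^4+\cdots+x_4^4=0\}\cap S_2\in|\mathcal{O}_{S_2}(4)|$, $\{x_0^5+\cdots+x_4^5=0\}\cap S_2\in|\mathcal{O}_{S_2}(5)|$, and unions of the hyperplane conics, whenever the orbit is large enough that $|\Sigma|<6d$ holds for the available curve of degree $2d$ through it.

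\textbf{The main obstacle} is the case where this numerical slack disappears: an $\mathfrak{S}_5$-orbit $\Sigma\subset S_2$ of length exactly $20$. Such orbits do occur (the stabilizer of a point is $\mathbb{Z}/6$ or $\mathfrak{S}_3$), they do not lie on $\mathcal{B}_6$, and there is no $\mathfrak{S}_5$-invariant curve of degree $\leqslant 6$ through them other than $\mathcal{B}_6$ — so the inequality $|\Sigma|<6d$ cannot be met by any available invariant curve. For these orbits one has to work locally at a point $P\in\Sigma$ and use the induced action of the stabilizer $G_P\subset\mathfrak{S}_5$ on $T_P(S_2)\cong\mathbb{C}^2$ — it is irreducible when $G_P\cong\mathfrak{S}_3$ and scalar through a character when $G_P\cong\mathbb{Z}/6$ — to show, by a suitable (weighted) blow-up of $P$ in the spirit of Remark~\ref{remark:Ziquan} and Lemma~\ref{lemma:Kawamatalike_inequality}, together with the bound $D\cdot\mathcal{O}_{S_2}(1)=4$ to preclude an invariant curve through $\Sigma$ absorbing the singularity, that an $\mathfrak{S}_5$-invariant $D\sim_{\mathbb{Q}}-K_{S_2}$ cannot be singular enough at $\Sigma$ to make $(S_2,\tfrac32 D)$ fail to be log canonical there. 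Carrying out this local analysis for each of the finitely many length-$20$ orbits is the technical heart of the argument.
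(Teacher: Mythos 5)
Your upper bound is correct and coincides with the standard one. The lower bound, however, is not a proof: you yourself flag that the ``technical heart'' (orbits through which no sufficiently low-degree invariant curve passes) is left undone, and this gap is real. Worse, the strategy of producing an invariant curve $C\in|\mathcal{O}_{S_2}(d)|$ through the orbit $\Sigma$ and comparing $\tfrac{2}{3}|\Sigma|$ with $D\cdot C=4d$ cannot be pushed through in general: the problematic cases are not only the finitely many orbits of length $20$, but essentially all orbits. For a general point of $S_2$ the orbit has length $120$, and the $\mathfrak{S}_5$-invariant part of $H^0(S_2,\mathcal{O}_{S_2}(d))$ has dimension roughly $(d+1)^2/120$, so an invariant curve through such an orbit first appears in degree $d$ on the order of $120$; then $|\Sigma|<6d$ is no contradiction at all. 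So the method degenerates for every large orbit, not just the length-$20$ ones, and the proposed local blow-up analysis at finitely many special points would not rescue it.

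The argument the paper relies on (via \cite[Lemma~5.1]{Ch08a}) is different and avoids any case analysis over orbits. Since $\mathrm{Pic}^{\mathfrak{S}_5}(S_2)=\mathbb{Z}[H\vert_{S_2}]$ and $|H\vert_{S_2}|$, $|2H\vert_{S_2}|$ contain no $\mathfrak{S}_5$-invariant curves, the $\mathfrak{S}_5$-orbit of any irreducible component of $D$ lies in $|mH\vert_{S_2}|$ with $m\geqslant 3$; as $D\sim_{\mathbb{Q}}2H\vert_{S_2}$ is invariant, every coefficient of $D$ is at most $\tfrac{2}{3}$. Hence the non-log-canonical locus of $(S_2,\tfrac{3}{2}D)$ contains no curves, and Nadel vanishing applied with $L=-K_{S_2}$ (note $L-(K_{S_2}+\tfrac{3}{2}D)\sim_{\mathbb{Q}}H\vert_{S_2}$ is ample) bounds the number of isolated non-log-canonical points by $h^0(S_2,-K_{S_2})=9$. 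Since every $\mathfrak{S}_5$-orbit in $S_2$ has length at least $20$ by Lemma~\ref{lemma:quadric-G-orbits}, this is a contradiction. This is the same vanishing-theorem mechanism used in the proofs of Propositions~\ref{proposition:quadric-maximal-singularities} and \ref{proposition:cubic-maximal-singularities}; I recommend you rebuild your lower bound along these lines rather than point by point.
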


\begin{proof}
Observe that $\mathrm{Pic}^{\mathfrak{S}_5}(S_2)=\mathbb{Z}[H\vert_{S_2}]$ and $\mathcal{B}_6\in|3H\vert_{S_2}|$.
But $|H\vert_{S_2}|$ and $|2H\vert_{S_2}|$ do not contain any $\mathfrak{S}_5$-invariant curves.
Hence, we have $\alpha_{\mathfrak{S}_5}(S_2)=\frac{3}{2}$ by \cite[Lemma~5.1]{Ch08a} and Lemma~\ref{lemma:quadric-G-orbits}.
\end{proof}

Now we ready to prove Proposition \ref{proposition:quadric-maximal-singularities}.

\begin{proof}[Proof of Proposition~\ref{proposition:quadric-maximal-singularities}]
Suppose $(Q,\lambda\mathcal{M}_Q)$ is not canonical.
Denote by $\Sigma$ its non-canonical locus.
To complete the~proof, we have to show that $\Sigma\subseteq\Sigma_5\cup\Sigma_5^\prime$.

First, let us show that the~set $\Sigma$ consists of finitely many points.
Indeed, suppose that $\Sigma$ contains a~$\mathfrak{S}_5$-irreducible curve $C$.
Then
\begin{equation}
\label{equation:quadric-mult-1}
 \mult_{C}\big(\mathcal{M}_Q\big)>\frac{1}{\lambda},
\end{equation}
which easily implies that $\mathrm{deg}(C)<18$.
Arguing as in the~proof of Lemma~\ref{lemma:quadric-curves}, we see that $C\subseteq S_2$.
Then \eqref{equation:quadric-mult-1} gives $\mathrm{deg}(C)<6$, which is impossible by Lemma \ref{lemma:quadric-curves}.
Hence, we see that $\Sigma$ is finite.

If $\Sigma\cap S_2\ne\varnothing$, then the~log pair $(S_2,\lambda\mathcal{M}_Q|_{S_2})$ is not log canonical by the~inversion of adjunction,
which is impossible by Lemma~\ref{lemma:quadric-alpha}.
Thus, we have $\Sigma\cap S_2=\varnothing$.

Applying Remark~\ref{remark:Ziquan}, we~see~that $(Q,\frac{3\lambda}{2}\mathcal{M}_Q)$ is not log canonical at every point of the~set~$\Sigma$.
Take $\varepsilon\in\mathbb{Q}_{>0}$ such that $\Sigma\subset\mathrm{Nklt}(Q,\frac{3\lambda-\varepsilon}{2}\mathcal{M}_Q)$.
Set $\Omega=\mathrm{Nklt}(Q,\frac{3\lambda-\varepsilon}{2}\mathcal{M}_Q)$.
Then $\Omega$ is~\mbox{$\mathfrak{S}_5$-invariant}.
Moreover, arguing as in the~proof of  Corollary~\ref{corollary:quadric-log-canonical},
we~see that the~locus $\Omega$ does not contain curves, so that $\Omega$ is a~finite set.
Now, applying Nadel vanishing theorem, we get $h^1(Q,\mathcal{J}\otimes \mathcal{O}_Q(2H\vert_{Q}))=0$,
where $\mathcal{J}$ is the~multiplier ideal sheaf of the~log pair $(Q,\frac{3-\varepsilon}{2}\lambda\mathcal{M}_Q)$.
This gives
$$
|\Sigma|\leqslant|\Omega|\leqslant h^0\Big(Q,\mathcal{O}_Q\big(2H\vert_{Q}\big)\Big)=14,
$$
because $\mathrm{Supp}(\mathcal{J})=\Omega$. Now, using Lemma \ref{lemma:quadric-G-orbits}, we see that one of the~following possibilities holds:
\begin{itemize}
\item $\Sigma\subseteq\Sigma_5\cup\Sigma^\prime_5$;
\item $\Omega=\Sigma=\Sigma_{10}$;
\item $\Omega=\Sigma=\Sigma_{10}^\prime$.
\end{itemize}
If $\Sigma\subseteq\Sigma_5\cup\Sigma^\prime_5$, we are done.
Hence, without loss of generality, we may assume that $\Omega=\Sigma=\Sigma_{10}$.
Let us show that this assumption leads to a~contradiction.

Let $\mathcal{D}$ be the~linear subsystem in $|2H|$ that consists of all surfaces in $|2H|$ that pass through~$\Sigma_{10}$.
By counting parameters, we get $\dim(\mathcal{D})\geqslant 4$.
Arguing as in the~proof of Lemma~\ref{lemma:quadric-curves},
we see that the~base locus of the~linear system $\mathcal{D}$ contains no curves.
Using \cite{Pukhlikov97} or \cite[Corollary~3.4]{corti}, we~get
$$
\frac{36}{\lambda^2}=D\cdot M_1\cdot M_2\geqslant\sum_{P\in\Sigma_{10}}\big(M_1\cdot M_2\big)_P>\sum_{P\in\Sigma_{10}}\frac{4}{\lambda^2}=\frac{40}{\lambda^2},
$$
which is absurd. This completes the~proof of Proposition~\ref{proposition:quadric-maximal-singularities}.
\end{proof}

Now, let us present a~few facts about the~threefold $Y$.
Its singular locus consists of five nodes:
\begin{equation*}
\begin{split}
P_1&=(1:0:0:0:0),\\
P_2&=(0:1:0:0:0),\\
P_3&=(0:0:1:0:0),\\
P_4&=(0:0:0:1:0),\\
P_5&=(0:0:0:0:1).\\
\end{split}
\end{equation*}
Note that $(3:3:3:3:-2)\in Y\setminus\mathrm{Sing}(Y)$. Let $\Theta_5$ be the~$\mathfrak{S}_5$-orbit of this point. Then $|\Theta_5|=5$.
For every $1\leqslant i<j\leqslant 5$, we let $\ell_{ij}$ be the~line in $\mathbb{P}^4$ that passes through the~nodes $P_i$ and $P_j$.
Let~$\mathcal{L}_{10}$ be the~union of these lines.
Then $\mathcal{L}_{10}\subset Y$,
and $\mathcal{L}_{10}\cap H$ is a~$\mathfrak{S}_5$-orbit $\Theta_{10}$ of length~$10$.
The cubic $Y$ contains two more $\mathfrak{S}_5$-orbits of~length~$10$, which we denote by $\Theta_{10}^\prime$ and $\Theta_{10}^{\prime\prime}$.

\begin{lemma}
\label{lemma:cubic-G-orbits}
The orbits $\mathrm{Sing}(Y)$, $\Theta_5$, $\Theta_{10}$, $\Theta_{10}^\prime$, $\Theta_{10}^{\prime\prime}$
are all $\mathfrak{S}_5$-orbit in $Y$ of length $<20$.
\end{lemma}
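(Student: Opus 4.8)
The plan is to classify all $\mathfrak{S}_5$-orbits in $\mathbb{P}^4$ of length less than $20$, and then to see which of these orbits meet $Y$ and how. Since the $\mathfrak{S}_5$-orbit of a point $p\in\mathbb{P}^4$ has length $120/|\mathrm{Stab}(p)|$, such an orbit has length $<20$ exactly when $|\mathrm{Stab}(p)|>6$, i.e. when $p$ is fixed by a subgroup $H\subset\mathfrak{S}_5$ of order $8$, $10$, $12$, $20$, $24$, $60$ or $120$. Up to conjugacy in $\mathfrak{S}_5$ such an $H$ is one of the following eight: a Sylow $2$-subgroup $\mathrm{D}_8$; a dihedral group $\mathrm{D}_{10}$; $\mathfrak{A}_4$ or $\mathfrak{S}_3\times\mathfrak{S}_2$ (the two classes of order $12$); the Frobenius group $\mathbb{Z}_5\rtimes\mathbb{Z}_4$ normalising a Sylow $5$-subgroup; a point stabiliser $\mathfrak{S}_4$; $\mathfrak{A}_5$; and $\mathfrak{S}_5$ itself. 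Hence it suffices, for each of these finitely many $H$, to compute the fixed locus $\mathrm{Fix}(H)\subset\mathbb{P}^4$, intersect it with $Y$, and collect the resulting orbits: by construction the $\mathfrak{S}_5$-orbits in $Y$ of length $<20$ are precisely the ones so obtained.

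To compute $\mathrm{Fix}(H)$ I would use the $\mathfrak{S}_5$-module decomposition $\mathbb{C}^5=\mathbf{1}\oplus V$, where $V$ is the standard $4$-dimensional irreducible representation: a point $p$ is fixed by $H$ if and only if the line $\mathbb{C}p$ is $H$-invariant, so $\mathrm{Fix}(H)$ is the projectivisation of the sum of the one-dimensional $H$-subrepresentations of $\mathbb{C}^5|_H$. For $H=\mathfrak{S}_5,\mathfrak{A}_5,\mathbb{Z}_5\rtimes\mathbb{Z}_4,\mathrm{D}_{10}$ the only such line is the trivial summand spanned by $(1:1:1:1:1)$, and this point is not on $Y$ since the defining cubic of $Y$ takes the value $10$ there. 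For $H=\mathfrak{S}_4$ or $H=\mathfrak{A}_4$ the trivial character appears in $\mathbb{C}^5|_H$ with multiplicity $2$, so $\mathrm{Fix}(H)$ is the line joining $(1:1:1:1:1)$ to a coordinate point, and it meets $Y$ in one node of $Y$ and one point of the orbit $\Theta_5$. Finally, for $H=\mathfrak{S}_3\times\mathfrak{S}_2$ and $H=\mathrm{D}_8$ the module $\mathbb{C}^5|_H$ carries, in addition to a line of fixed points of the previous type, one extra one-dimensional character giving an isolated fixed point, which one checks lies on $Y$; these produce the orbits $\Theta_{10}$, $\Theta_{10}'$ and $\Theta_{10}''$. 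Assembling the cases yields the list in the statement.

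I expect the main obstacle to be the analysis of the two positive-dimensional fixed loci. For $H=\mathfrak{S}_3\times\mathfrak{S}_2$, with the factors acting on $\{x_0,x_1,x_2\}$ and on $\{x_3,x_4\}$, the fixed line is $\{(a:a:a:b:b)\}$ and meets $Y$ where $a^3+6a^2b+3ab^2=0$; for each solution one must compute the exact stabiliser in $\mathfrak{S}_5$ — to be sure the orbit has the claimed length and is not longer, and to identify it with one of $\Theta_{10}$, $\Theta_{10}'$, $\Theta_{10}''$ — and then do the analogous bookkeeping for the $\mathrm{D}_8$-fixed line together with its extra fixed point. One also has to check that the conjugacy classification of subgroups of $\mathfrak{S}_5$ of order greater than $6$ is complete, and that no orbit arises from two different choices of $H$. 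A more conceptual alternative would be to transport Lemma~\ref{lemma:quadric-G-orbits} along the explicit $\mathfrak{S}_5$-equivariant birational map $\chi\colon Q\dashrightarrow Y$, tracking what the blow-up of $\Sigma_5$, the flop, and the contraction of the five hyperplane sections do to the small orbits; but the direct representation-theoretic computation above seems cleaner.
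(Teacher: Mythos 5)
The paper gives no proof of this lemma (it is ``left to the reader''), so your representation-theoretic strategy --- list the conjugacy classes of subgroups $H\subset\SS_5$ of order $>6$, projectivise the one-dimensional $H$-subrepresentations of $\mathbb{C}^5$, and intersect the resulting fixed loci with $Y$ --- is the natural and essentially the only route. Your list of the eight relevant conjugacy classes is complete, and the cases where $\mathrm{Fix}(H)$ is the point $(1:1:1:1:1)$ or the line $\{(a:a:a:a:b)\}$ are handled correctly: that line meets $Y$ where $2a^2(2a+3b)=0$, i.e.\ in the node $(0:0:0:0:1)$ and the point $(3:3:3:3:-2)$ of $\Theta_5$.

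However, the two cases you defer as ``the main obstacle'' are exactly where the content of the lemma lies, and your assertion that they ``produce the orbits $\Theta_{10},\Theta_{10}',\Theta_{10}''$'' is not what the computation actually yields. For $H=\SS_{\{0,1,2\}}\times\SS_{\{3,4\}}$ the fixed line $\{(a:a:a:b:b)\}$ meets $Y$ at the three points given by $a(a^2+6ab+3b^2)=0$, namely $(0:0:0:1:1)$ and the two points with $a=(-3\pm\sqrt{6})b$; each of these, as well as the isolated fixed point $(0:0:0:1:-1)$ spanning the $\mathbf{1}\boxtimes\mathrm{sgn}$ line, has stabiliser \emph{exactly} $H$. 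Since $H$ is self-normalising in $\SS_5$, two distinct points whose stabilisers are exactly equal to the same subgroup $H$ lie in distinct orbits, so this gives \emph{four} orbits of length $10$ in $Y$, not three. Moreover the isolated $\mathrm{D}_8$-fixed point $(1:-1:1:-1:0)$ lies on $Y$ (the four nonzero triple products of coordinates are $-1,+1,-1,+1$ and sum to zero) and its stabiliser is exactly $\mathrm{D}_8$, giving an orbit of length $15<20$ that is absent from the lemma's list. So unless there is an error in these evaluations, the completed computation does not prove the lemma as stated but rather shows its list is incomplete; the correct list appears to be $\mathrm{Sing}(Y)$, $\Theta_5$, four orbits of length $10$, and one of length $15$. (The paper's later uses of the lemma survive: the three extra length-$10$ orbits do not lie on the hyperplane $\sum x_i=0$, and the length-$15$ orbit exceeds the degree bounds invoked in the proofs of Lemma~\ref{lemma:cubic-curves} and Proposition~\ref{proposition:cubic-maximal-singularities}.) In any case, as written your argument stops short of the decisive stabiliser computations, and carrying them out is not a formality --- it changes the answer.
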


\begin{proof}
Left to the~reader.
\end{proof}

Let $S_3=Y\cap H$. Then $S_3$ is a~smooth cubic surface known as
the Clebsch diagonal cubic surface.
It follows from \cite[Lemma~6.3.12]{CheltsovShramov}
that $\Theta_{10}\subset S_3$,
but $S_3$ does not contain $\mathrm{Sing}(Y)$, $\Theta_5$, $\Theta_{10}^\prime$, $\Theta_{10}^{\prime\prime}$.
Observe also that $S_3$ contains the~curve $\mathcal{B}_6$.

\begin{lemma}
\label{lemma:cubic-curves}
Let $C$ be a~$\mathfrak{S}_5$-invariant curve in $Y$ such that $\mathrm{deg}(C)\leqslant 10$.
Then $C=\mathcal{B}_6$ or $\mathcal{L}_{10}$.
\end{lemma}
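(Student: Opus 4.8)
The plan is to adapt the proof of Lemma~\ref{lemma:quadric-curves}, splitting into the cases $C\subseteq S_3$ and $C\not\subseteq S_3$. As there, we may assume $C$ is $\mathfrak{S}_5$-irreducible: if every $\mathfrak{S}_5$-irreducible piece of $C$ is $\mathcal{B}_6$ or $\mathcal{L}_{10}$, then $C\in\{\mathcal{B}_6,\mathcal{L}_{10},\mathcal{B}_6\cup\mathcal{L}_{10}\}$, and the last is impossible since $\deg(\mathcal{B}_6\cup\mathcal{L}_{10})=16>10$. So write $C$ as the $\mathfrak{S}_5$-orbit of an irreducible curve $C_0$, having $k$ components, with $\deg C_0=\deg C/k$.

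Suppose first $C\subseteq S_3$. On the Clebsch cubic surface $S_3$ one has $\mathcal{B}_6=S_3\cap\{x_0^2+\cdots+x_4^2=0\}$, hence $\mathcal{B}_6\sim 2H|_{S_3}$ and $C\cdot\mathcal{B}_6=2\deg C\in[2,20]$. If $C\neq\mathcal{B}_6$, then $C\cap\mathcal{B}_6$ is a non-empty $\mathfrak{S}_5$-invariant subset of $\mathcal{B}_6$ of cardinality $\leqslant 20$. But $\mathcal{B}_6\subseteq S_3$ contains none of $\mathrm{Sing}(Y),\Theta_5,\Theta_{10}',\Theta_{10}''$ and $\Theta_{10}\cap\mathcal{B}_6=\varnothing$, so by Lemma~\ref{lemma:cubic-G-orbits} every $\mathfrak{S}_5$-orbit on $\mathcal{B}_6$ has length $\geqslant 20$ — indeed $\geqslant 24$, by a Riemann--Hurwitz count for $\mathcal{B}_6\to\mathcal{B}_6/\mathfrak{S}_5\cong\mathbb{P}^1$ (which must have three branch points of orders $2,4,5$); alternatively $\mathrm{Pic}^{\mathfrak{S}_5}(S_3)=\mathbb{Z}[H|_{S_3}]$ forces $3\mid\deg C$ and then $\deg C\in\{3,6,9\}$ with $\mathcal{B}_6$ the only invariant member. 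Either way $C=\mathcal{B}_6$.

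Now suppose $C\not\subseteq S_3$. Let $\psi=[e_1^2:e_2]\colon Y\dashrightarrow\mathbb{P}^1$, where $e_j$ is the $j$-th elementary symmetric function of $x_0,\dots,x_4$; this map is $\mathfrak{S}_5$-invariant, its fibres are the invariant sextic surfaces $M_\lambda=\{e_2=\lambda e_1^2\}\cap Y\sim 2H_Y$ together with (a multiple of) $S_3=\{e_1=0\}\cap Y$, and $\mathrm{Bs}(\psi)=\{e_1=e_2=0\}\cap Y=\mathcal{B}_6$. Since $\psi$ is constant on $\mathfrak{S}_5$-orbits, $\psi|_C$ is constant iff $\psi|_{C_0}$ is; if $\psi|_C$ were constant, $C$ would lie in $S_3$ (excluded) or in some $M_\lambda$, and then $C\cap H$ — non-empty and $\mathfrak{S}_5$-invariant — would lie in $\mathcal{B}_6$, contradicting the orbit bound of the previous paragraph. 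Hence $\psi|_{C_0}$ is a non-constant morphism of degree $\leqslant C_0\cdot M_\lambda=2\deg C_0=2\deg C/k\leqslant 20/k$, equivariant for the action of $\mathrm{Stab}(C_0)$ (of order $120/k$) on $C_0$. Therefore a general point $y\in C_0$ has $\mathfrak{S}_5$-stabiliser of order $\geqslant(120/k)/(20/k)=6$; since a finite union of proper closed subsets cannot cover the irreducible curve $C_0$, some subgroup $K\leqslant\mathfrak{S}_5$ with $|K|\geqslant 6$ fixes $C_0$ pointwise, so $C_0\subseteq\mathrm{Fix}(K)\subseteq\mathbb{P}^4$.

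It remains to identify $C_0$. Running through the subgroups $K\leqslant\mathfrak{S}_5$ with $|K|\geqslant 6$ and at least two orbits on $\{0,\dots,4\}$ (needed for $\dim\mathrm{Fix}(K)\geqslant 1$), the locus $\mathrm{Fix}(K)\cap Y$ is a curve only when $K\cong\mathfrak{S}_3$ permutes three of the coordinates and fixes the remaining two, in which case $\mathrm{Fix}(K)\cong\mathbb{P}^2$ and $\mathrm{Fix}(K)\cap Y=\ell\cup Q$ with $\ell$ the line $\ell_{ij}$ joining the two nodes indexed by the fixed coordinates and $Q$ a smooth conic; in every other case $\mathrm{Fix}(K)\cap Y$ is finite, because the relevant $\mathbb{P}^1$ is not contained in $Y$. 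Since $C_0$ is irreducible, $C_0=\ell_{ij}$ or $C_0=Q$; but $\mathrm{Stab}(Q)=\mathfrak{S}_2\times\mathfrak{S}_3$, so the $\mathfrak{S}_5$-orbit of $Q$ would consist of $10$ conics of total degree $20>10$ — impossible. Hence $C_0=\ell_{ij}$ and $C=\mathfrak{S}_5\cdot\ell_{ij}=\mathcal{L}_{10}$. The main obstacle is precisely this last classification step: one must be sure that no $\mathfrak{S}_5$-irreducible curve other than $\mathcal{L}_{10}$ can be swept out by a fixed-point locus, which rests on the elementary but slightly tedious enumeration of conjugacy classes of subgroups $K$ together with the computation of $\mathrm{Fix}(K)\cap Y$ in each case.
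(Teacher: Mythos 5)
Your proof is correct, but it takes a genuinely different route from the paper's. For $C\subseteq S_3$ the paper simply cites \cite[Theorem~6.3.18]{CheltsovShramov}, whereas you give a self-contained argument from $\mathcal{B}_6\sim 2H\vert_{S_3}$ together with the fact that the minimal $\mathfrak{S}_5$-orbit on Bring's curve has length $24$; note that Lemma~\ref{lemma:cubic-G-orbits} alone only excludes orbits of length $<20$, which is not enough against $C\cdot\mathcal{B}_6\leqslant 20$, so the Riemann--Hurwitz refinement (signature $(0;2,4,5)$, forced by $g=4$ and $|\mathfrak{S}_5|=120$) is genuinely needed, and it is the stronger of your two alternatives. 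For $C\not\subseteq S_3$ the paper first shows $C\cap H=\Theta_{10}$, kills the irreducible case by a tangent-space argument at the points of $\Theta_{10}$, and then runs through the possible component stabilizers dictated by $10=k\deg C_0$, invoking \cite[Lemma~6.3.12]{CheltsovShramov} to exclude the $\mathfrak{A}_5$ case; you instead use the $\mathfrak{S}_5$-invariant pencil $[e_1^2:e_2]$ (with base locus $\mathcal{B}_6$) to force every general point of a component $C_0$ to have stabilizer of order at least $6$, conclude $C_0\subseteq\mathrm{Fix}(K)$ for some $K$ with $|K|\geqslant 6$, and classify the fixed loci. I checked the enumeration you flag as the main obstacle: the trivial isotypic part of $\mathbb{C}^5$ is at least $3$-dimensional only for the intransitive $\mathfrak{S}_3$, and the candidate lines $\{x_0=x_1=x_2=x_3\}$ and $\{x_0=x_1=x_2,\,x_3=x_4\}$ arising for the other subgroups do not lie on $Y=\{e_3=0\}$, so indeed only $\ell_{ij}\cup Q$ occurs. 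Two assertions you leave implicit are easily supplied: the signature claim above, and $\mathrm{Stab}(Q)=\mathfrak{S}_3\times\mathfrak{S}_2$, which follows since this subgroup is maximal in $\mathfrak{S}_5$ and $Q$ spans a plane while $\mathbb{P}^4$ contains no $\mathfrak{S}_5$-invariant plane. What your approach buys is independence from the cited facts about $\Theta_{10}$ on the Clebsch surface; what it costs is the subgroup/fixed-locus enumeration and the finer orbit analysis on $\mathcal{B}_6$.
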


\begin{proof}
If $C\subset S_3$, the~assertion follows from \cite[Theorem~6.3.18]{CheltsovShramov}.
Hence, we assume that $C\not\subset S_3$.
Then, arguing as in the~proof of Lemma~\ref{lemma:quadric-curves}, we conclude that  and $C\cdot H=\Theta_{10}$.

We suppose that the~curve $C$ is irreducible. Then $C$ has to be singular at every point $P\in\Theta_{10}$,
because the~stabilizer in $\mathfrak{S}_5$ of the~point $P$ acts faithfully on the~Zariski tangent space $T_{P}(C)$.
Thus, if $C$ is irreducible, then $10=C\cdot H\geqslant 2|\Theta_{10}|$, which is absurd.

We see that $C$ is reducible and $\mathrm{deg}(C)=10$.
Let $C_1$ be an irreducible components of the~curve~$C$,
and let $G$ be the~stabilizer in $\mathfrak{S}_5$ of the~curve $C_1$.
Then one of the~following four cases holds:
\begin{enumerate}
\item $G\cong\mathfrak{A}_5$ and $C$ is a~union of $2$ irreducible curves of degree $5$;
\item $G\cong\mathfrak{S}_4$ and $C$ is a~union of $5$ irreducible conics;
\item $G\cong\mathfrak{A}_4$ the~$C$ is a~union of $10$ lines;
\item $G\cong\mathfrak{S}_3\times\mumu_2$ and $C$ is a~union of $10$ lines.
\end{enumerate}
In the~case (1), $\Theta_{10}$ splits as two $G$-orbits of length $5$, which is not the~case by \cite[Lemma~6.3.12]{CheltsovShramov}.
In the~cases (2) and (3), the~only two-dimensional $G$-invariant linear subspace of $\mathbb{P}^4$
is contained in the~$\mathfrak{S}_5$-invariant hyperplane $H$, so that $C_1$ is contained in $S_3$, which contradicts our assumption.
In the~case (4), one can easily see that $C=\mathcal{L}_{10}$.
\end{proof}

Now, we are ready prove Proposition \ref{proposition:cubic-maximal-singularities}.

\begin{proof}[Proof of Proposition \ref{proposition:cubic-maximal-singularities}]
It is enough to prove that $(Y,\mu\mathcal{M}_Y)$ is canonical away from $\mathrm{Sing}(Y)$.
Suppose that this log pair is not canonical.
Let $\Sigma$ be its non-canonical locus.

First, we claim that $\Sigma$ is a~finite set.
Indeed, suppose that $\Sigma$ contains a~$\mathfrak{S}_5$-irreducible curve.
Then $ \mult_{C}(\mathcal{M})>\frac{1}{\mu}$.
If $C\subset S_3$, this implies that $\mathrm{deg}(C)<6$, which is impossible by Lemma~\ref{lemma:cubic-curves}.
Thus, we see that $C\not\subset S_3$. Then $\mathrm{deg}(C)<12$, so that $H\cdot C<12$.
Using Lemmas~\ref{lemma:cubic-G-orbits} and \ref{lemma:cubic-curves}, we conclude that $C=\mathcal{L}_{10}$.
Let $H^\prime$ be the~hyperplane in $\mathbb{P}^4$ that contains the~nodes $P_1$, $P_2$, $P_3$, $P_4$,
and let $M$ be a~general surface in $\mathcal{M}_Y$.
Then
$$
H^\prime\cdot M=m\big(\ell_{12}+\ell_{13}+\ell_{14}+\ell_{23}+\ell_{24}+\ell_{34}\big)+\Delta
$$
where $a$ is an integer such that $a\geqslant \mult_{C}(\mathcal{M})$,
and $\Delta$ is an effective one-cycle whose support does not contains the~lines
$\ell_{12}$, $\ell_{13}$, $\ell_{14}$, $\ell_{23}$, $\ell_{24}$, $\ell_{34}$.
Therefore, we have
$$
\frac{6}{\mu}=\frac{2}{\mu}H^3=H\cdot H^\prime\cdot M=6m+H\cdot\Delta\geqslant 6m\geqslant 6 \mult_{C}(\mathcal{M})>\frac{6}{\mu},
$$
which is absurd. Thus, we see that $\Sigma$ is a~finite set.

Let $\Sigma_1$ be the~subset in $\Sigma$ that consists of all smooth points of $Y$.
We have to show that~\mbox{$\Sigma_1=\varnothing$}.
If~$\Sigma_1\cap S_3\ne\varnothing$, then the~log pair $(S_3,\mu\mathcal{M}_Y\vert_{S_3})$ is not log canonical,
which implies that $\alpha_{\mathfrak{S}_5}(S_3)<2$. The latter contradicts \cite[Example~1.11]{Ch08a}.
Thus, we have $\Sigma_1\cap S_3=\varnothing$.

Now, using Remark~\ref{remark:Ziquan}, we see that $(Y,\frac{3\mu}{2}\mathcal{M}_Y)$ is not log canonical at every point of the~set $\Sigma_1$.
Moreover, arguing exactly as in the~proof of Corollary~\ref{corollary:quadric-log-canonical} and using Lemma~\ref{lemma:cubic-curves},
we see that each point of the~subset $\Sigma_1$ is an isolated center of non-log canonical singularities of the~pair~$(Y,\frac{3\mu}{2}\mathcal{M}_Y)$.
Now, using Nadel vanishing theorem as we did in the~proof of Proposition~\ref{proposition:quadric-maximal-singularities},
we see that~$|\Sigma_1|\leqslant 5$. Therefore, we have $\Sigma_1=\Theta_5$ by Lemma~\ref{lemma:cubic-G-orbits}.

Let $M_1$ and $M_2$ be general surfaces in $\mathcal{M}_Y$.
Then it follows from \cite{Pukhlikov97} or \cite[Corollary~3.4]{corti} that
$$
\big(M_1\cdot M_2\big)_Q>\frac{4}{\mu^2},
$$
for every point $Q\in \Theta_5$. Let $Q_1$, $Q_2$ and $Q_3$ be three points in $\Theta_5$,
let $\Pi$ be the~plane in $\mathbb{P}^4$ that contains these three points, and let $\mathcal{C}=Y\vert_{\Pi}$.
Then $\mathcal{C}$ is a~smooth irreducible cubic curve.
Write
$$
M_1\cdot M_2=\epsilon\mathcal{C}+\Omega,
$$
where $\epsilon$ is a~non-negative rational number, and $\Omega$ is an effective one-cycle whose support does not contain $\mathcal{C}$.
Let $H^\prime$ be a~general hyperplane section of the~cubic hypersurface $Y$ that contains~$\mathcal{C}$.
Then $H^\prime$ does not contain any irreducible component of the~one-cycle $\Omega$.
Thus, we have
$$
\frac{12}{\mu^2}-3\epsilon=H^\prime\cdot\Omega\geqslant \mult_{Q_1}\big(\Omega\big)+ \mult_{Q_2}\big(\Omega\big)+ \mult_{Q_3}\big(\Omega\big)>3\Big(\frac{4}{\mu^2}-\epsilon\Big)=\frac{12}{\mu^2}-3\epsilon,
$$
which is absurd. This completes the~proof of Proposition~\ref{proposition:cubic-maximal-singularities}.
\end{proof}

This completes the~proof of Theorem~\ref{theorem:quadric-cubic},
which also implies that $Q$ is $\mathfrak{S}_5$-solid \cite{AhmadinezhadOkada,CheltsovDuboulozKishimoto,CheltsovSarikyan}.

\section{Preliminary results}
\label{section:preliminary}

In this section, we prove a few results that will be used towards the proof of Theorem \ref{theorem:main}.

Let $X$ be a~variety with at most Kawamata log terminal singularities that is faithfully acted on by a~finite group $G$.
The following result is a consequence of the technique developed in \cite[\S 3]{Puk-book}.

\begin{lemma}
\label{lemma:hypertangent}
Suppose $X$ is smooth. Let $Z$ be a~$G$-irreducible subvariety of $X$ of codimension~$m$,
let $H$ be an ample divisor on $X$,
and let $D_1,D_2,\ldots,D_m$ be effective divisors on $X$ such that
$$
D_1\sim_{\mathbb{Q}} D_2\sim_{\mathbb{Q}}\cdots\sim_{\mathbb{Q}}D_m\sim_{\mathbb{Q}} H,
$$
and $Z$ is a $G$-irreducible component of the intersection $\cap_{i=1}^m \mathrm{Supp}(D_i)$. Let $Y\subset X$ be an effective cycle of codimension $c\leqslant m$. Then
$$
\frac{\mult_Z(Y)}{\mathrm{deg}(Y)}\leqslant \left(\mathrm{deg} (Z) \cdot \min_S \prod_{i\in S}  \mult_Z (D_i) \right)^{-1}
$$
where the~minimum is taken over all subsets $S\subseteq \{1,\ldots,m\}$ of cardinality $m-c$.
\end{lemma}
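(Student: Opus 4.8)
The plan is to run the standard hypertangent-divisor argument of Pukhlikov, adapted to the $G$-equivariant setting by keeping everything $G$-irreducible. The starting point is the observation that since $Z$ is a $G$-irreducible component of $\bigcap_{i=1}^m \mathrm{Supp}(D_i)$, we may peel off the divisors $D_i$ one at a time, intersecting $Y$ successively with suitably chosen $D_i$ and discarding components that do not meet $Z$. Concretely, I would argue by induction on the codimension $c$ of $Y$: first handle the case $c=m$, where $Z$ is (a $G$-component of) the zero-dimensional-fibre intersection and the bound reads $\mult_Z(Y)/\deg(Y)\leqslant (\deg Z)^{-1}$, which is just the fact that the local multiplicity of $Y$ at $Z$ is at most its degree divided by $\deg Z$. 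For the inductive step, pick an index $i_0\notin S$ realizing the relevant part of the product, replace $Y$ by the part of $Y\cdot D_{i_0}$ supported on components through $Z$, and use the elementary inequality
$$
\mult_Z(Y\cdot D_{i_0})\geqslant \mult_Z(Y)\cdot \mult_Z(D_{i_0})
$$
together with $\deg(Y\cdot D_{i_0})=\deg(Y)\cdot (H\cdot\text{something})$. The point is that discarding components away from $Z$ only decreases the degree, so the ratio $\mult_Z/\deg$ can only go up, and after $m-c$ steps we land in the base case.

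The technical heart is bookkeeping the degrees correctly. Since $D_i\sim_{\mathbb{Q}} H$ for all $i$, each intersection with a $D_{i_0}$ multiplies the degree (measured against $H$) by a controlled factor, and after $m-c$ such steps the subvariety $Y':=$ (the part of $Y\cdot D_{i_1}\cdots D_{i_{m-c}}$ through $Z$) has codimension $m$ and contains $Z$ as a $G$-component, so $\deg(Y')\geqslant \mult_Z(Y')\cdot\deg(Z)$. Unwinding the multiplicativity of $\mult_Z$ under intersection with the $D_{i_j}$, $j$ ranging over the complement of $S$ in $\{1,\dots,m\}$, one gets
$$
\deg(Y)\ \geqslant\ \mult_Z(Y)\cdot \deg(Z)\cdot \prod_{i\in S}\mult_Z(D_i),
$$
and since this holds for the particular $S$ one chooses, it holds for the minimizing $S$, which is exactly the claimed estimate after rearranging. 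The smoothness of $X$ is used to guarantee that local intersection multiplicities behave as expected (properness of intersection can be arranged, and $\mult_Z$ is well-defined and multiplicative along the chosen flag), and the ampleness of $H$ ensures that intersecting with a general member of a pencil moving $D_{i_0}$ does not create spurious components, while $G$-irreducibility of $Z$ lets us keep the whole construction $G$-equivariant by always taking $G$-invariant linear combinations of the $D_i$.

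The main obstacle I anticipate is making the "discard components away from $Z$" step rigorous while preserving both the multiplicativity of $\mult_Z$ and the degree inequalities simultaneously; in particular one must be careful that the relevant intersections are proper (dimensionally correct) at the generic point of $Z$, which is where the hypothesis that $Z$ is an \emph{irreducible component} of $\bigcap\mathrm{Supp}(D_i)$—not merely contained in it—is essential. Once the flag of partial intersections is set up so that each successive intersection drops dimension by exactly one near $Z$, the two inequalities combine cleanly and the result follows; this is precisely the content extracted from \cite[\S 3]{Puk-book}, with the only new ingredient being the uniform choice of the subset $S$ and the resulting $\min_S$ in the statement.
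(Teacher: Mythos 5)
Your proposal follows essentially the same route as the paper: one peels off the divisors $D_i$ one at a time, at each step choosing a $D_j$ not containing the current cycle (which exists precisely because $Z$ is a component of $\cap_i\mathrm{Supp}(D_i)$ of codimension $m$), passing to the component of the intersection through $Z$ with the largest ratio $\mult_Z/\deg$, and using $\mult_Z(Y\cdot D_j)\geqslant \mult_Z(Y)\mult_Z(D_j)$ together with $\deg(Y\cdot D_j)=\deg(Y)$ until one reaches codimension $m$, where the bound $\deg\geqslant \mult_Z\cdot\deg(Z)$ is immediate; the $\min_S$ then follows because the construction produces the inequality for one particular subset $S$ of size $m-c$. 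The only slip is an indexing one: the set $S$ in the final product must consist of the $m-c$ indices of the divisors \emph{actually used} in the intersection process (so you should pick $i_0\in S$, not $i_0\notin S$, and the complement of $S$ has cardinality $c$, not $m-c$), but this does not affect the substance of the argument.
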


\begin{proof}
We may assume  that $Y$ is irreducible and $Z\subseteq Y$. We construct a~sequence of irreducible subvarieties $Y_c,\ldots,Y_m$ and a~permutation $D'_1,\ldots,D'_m$ of $D_1,\ldots,D_m$ such that
\begin{itemize}
\item $Y_c=Y$;
\item $\mathrm{codim}_X(Y_i)=i$;
\item $Y_i\not\subset \mathrm{Supp}(D'_{i+1})$;
\item $Y_{i+1}$ is a~component of $Y_i\cdot D'_{i+1}$ that contains $Z$;
\item for all $c\leqslant i\leqslant m-1$ one has
 $$
\frac{\mult_Z(Y_{i+1})}{\mathrm{deg}(Y_{i+1})}\geqslant \mult_Z (D'_{i+1}) \cdot \frac{\mult_Z(Y_i)}{\mathrm{deg}(Y_i)}.
 $$
\end{itemize}
Once this is done, the~lemma follows immediately from the~trivial equality $Y_m=Z$.

Suppose that $Y_c, \ldots, Y_i$ and $D'_{c+1},\ldots,D'_i$ have been constructed for some $i<m$. Then
$$
Y_i\subseteq \bigcap_{j=c+1}^{i} \mathrm{Supp}(D_j).
$$
Since $\cap_{i=1}^m \mathrm{Supp}(D_i)$ has codimension $m$ in a~neighbourhood of $Z$ by assumption and
$$
\mathrm{codim}_X(Y_i)=i<m,
$$
then there exists some $D_j$, which is necessary different from $D'_{c+1},\ldots,D'_i$,
which gives $Y_i\not\subseteq D_j$. We may then take $D'_{i+1}=D_j$ and $Y_{i+1}$ an irreducible component of $(Y_i\cdot D_j)$ such that
$$
\frac{\mult_Z(Y_{i+1})}{\mathrm{deg}(Y_{i+1})}\geqslant \frac{\mult_Z(Y_i\cdot D_j)}{\mathrm{deg}(Y_i\cdot D_j)}\geqslant \mult_Z D_j \cdot \frac{\mult_Z(Y_i)}{\mathrm{deg}(Y_i)}.
$$
By induction this finishes the~construction.
\end{proof}

Now, let $D$ be either an effective $\mathbb{Q}$-divisor on $X$ (a boundary),
or a~movable (mobile) boundary:
$$
D=\sum_{i=1}^{r}a_i\mathcal{M}_i,
$$
where each $a_i\in\mathbb{Q}_{\geqslant 0}$, and each $\mathcal{M}_i$ is a~linear system on $X$ that does not have fixed components.
Suppose, in addition, that $D$ is $G$-invariant.

\begin{lemma}
\label{lemma:tie-breaking}
Suppose that $(X,D)$ is not log canonical, and $D$ is ample.
Then there exists positive rational number $\epsilon<1$ such that the~following assertions hold:
\begin{itemize}
\item if $D$ is a~$\mathbb{Q}$-divisor, there exists a~$G$-invariant effective $\mathbb{Q}$-divisor $D^\prime\sim_{\mathbb{Q}} (1-\epsilon)D$
such that the~log pair $(X,D^\prime)$ has log canonical singularities, and $\mathrm{Nklt}(X,D^\prime)$ is a~non-empty disjoint union of minimal log canonical centers
of the~log pair $(X,D^\prime)$,
\item if $D$ is a~mobile boundary, there exists a~$G$-invariant mobile boundary $D^\prime\sim_{\mathbb{Q}} (1-\epsilon)D$
such that the~log pair $(X,D^\prime)$ has log canonical singularities, and $\mathrm{Nklt}(X,D^\prime)$ is a~non-empty disjoint union of minimal log canonical centers
of the~log pair $(X,D^\prime)$.
\end{itemize}
Furthermore, irreducible components of $\mathrm{Nklt}(X,D^\prime)$ are normal,
and $G$ transitively permutes them.
\end{lemma}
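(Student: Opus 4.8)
The plan is to prove this via the standard ``tie-breaking'' (perturbation) trick adapted to the $G$-equivariant and mobile-boundary settings. First I would use ampleness of $D$ to produce an auxiliary effective $\mathbb{Q}$-divisor, or mobile boundary, in the class of a small multiple of $D$ that is highly singular along the non-klt locus. Concretely: since $(X,D)$ is not log canonical, there is a $G$-invariant log canonical threshold $c = \mathrm{lct}(X,D) < 1$, so $(X,cD)$ is log canonical but not klt, with $\mathrm{Nklt}(X,cD) = \mathrm{LC}(X,cD)$ a non-empty $G$-invariant closed subset. Let $W$ be a minimal log canonical center of $(X,cD)$; by taking the $G$-orbit we may assume the $G$-translates of $W$ form all the minimal lc centers, and by the Kawamata subadjunction / Ambro--Kollár theory these minimal centers are normal. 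The remaining issue is that $\mathrm{LC}(X,cD)$ may contain higher-dimensional (non-minimal) lc centers, or the minimal centers in one $G$-orbit may be forced to intersect the support of other components; this is exactly what the perturbation removes.

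The key steps, in order, are as follows. First, I would choose a $G$-invariant effective $\mathbb{Q}$-divisor $A \sim_{\mathbb{Q}} \delta D$ (or mobile boundary, using that $|mD|^G$ is base-point free enough for general $m$ since $D$ is ample and $G$-invariant — one averages a general member over $G$) vanishing to high order along a fixed minimal lc center $W_0$ but not containing the other minimal centers; ampleness is what guarantees such $A$ exists with $\delta$ arbitrarily small. Second, I would look at the family of pairs $(X, (c-t)D + A)$ for small $t>0$ and find, via a standard argument on how lct varies, a value of the perturbation parameter for which the new pair is log canonical, its non-klt locus is non-empty, and — crucially — the non-klt locus is now the disjoint union of the $G$-orbit of $W_0$ (the higher lc centers get killed because $c - t < c$, and the other minimal centers in $\mathrm{LC}(X,cD)$ that are not in the orbit of $W_0$ get killed by adding $A$, which is singular only along $W_0$'s orbit). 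Third, set $\epsilon$ so that $(1-\epsilon)D$ is the class $(c-t)D + A$ up to $\mathbb{Q}$-linear equivalence, i.e. $1-\epsilon = (c-t) + \delta$, choosing $\delta, t$ small enough that $\epsilon < 1$ and $\epsilon > 0$; set $D' = (c-t)D + A$. Fourth, the disjointness of the components of $\mathrm{Nklt}(X,D')$ follows because connectedness (Kollár--Shokurov) would otherwise force the $G$-orbit of $W_0$ to be connected, but after the perturbation the non-klt locus is exactly a minimal lc center together with its translates which can be separated by a further infinitesimal perturbation if needed; normality of each component is minimal-lc-center normality, and $G$ permutes them transitively by construction of the orbit.

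The main obstacle I expect is the bookkeeping needed to guarantee \emph{simultaneously} that (a) the perturbed pair stays log canonical, (b) its non-klt locus is non-empty, and (c) that locus is a \emph{disjoint} union of \emph{minimal} lc centers — all while keeping $\epsilon<1$. Getting non-emptiness and minimality is routine (this is exactly Kawamata's subadjunction trick plus the fact that lowering the coefficient from $c$ to $c-t$ removes all but minimal centers), but ensuring the \emph{disjointness} across the $G$-orbit, when a priori two translates $gW_0$ and $g'W_0$ might meet, is the delicate point: one handles it by choosing $A$ (resp.\ the mobile-boundary perturbation) generically enough and, if two translates still meet, observing that their intersection would be a smaller $G$-invariant lc center contradicting minimality, or invoking that one can further perturb by a general $G$-invariant member of a large multiple of $D$ passing through the orbit to separate them. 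The mobile-boundary case requires the extra remark that a general $G$-invariant member of $|mD|$ (for $m \gg 0$, which exists since $mD$ is very ample and $G$-linearizable) has the needed vanishing and mobility, so the same argument applies verbatim with $A$ replaced by such a mobile boundary.
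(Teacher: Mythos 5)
Your proof is the standard equivariant tie-breaking argument, which is exactly what the paper invokes: its ``proof'' of this lemma is a one-line citation to \cite[Lemma~2.4.10]{CheltsovShramov} and Kawamata's perturbation technique. Your sketch correctly identifies the essential points (pass to the log canonical threshold, add a general ample $\mathbb{Q}$-divisor or mobile system singular along one $G$-orbit of minimal centers, and use that two intersecting lc centers of an lc pair contain a smaller lc center to force disjointness, plus normality of minimal centers via subadjunction), so it matches the intended argument.
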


\begin{proof}
This is an~equivariant version of~\emph{the tie breaking}. See \cite[Lemma~2.4.10]{CheltsovShramov} or \cite{Kawamata97,Kawamata98}.
\end{proof}

\begin{lemma}
\label{lemma:nNklt-base-locus}
Let $H$ be  a~very ample divisor in $\mathrm{Pic}(X)$, and let $L$ be a~divisor in $\mathrm{Pic}(X)$
such that the~divisor $L-(K_X+D+\dim(X)H)$ is ample.
Then $|L|$ contains a~non-empty $G$-invariant linear subsystem $\mathcal{L}$ such that $\mathrm{Nklt}(X,D)=\mathrm{Bs}(\mathcal{L})$.
\end{lemma}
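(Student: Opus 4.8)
The plan is to use Nadel vanishing to produce enough global sections of $L$ that vanish exactly along the non-klt locus, and then to average over $G$. First I would let $\mathcal{J}=\mathcal{J}(X,D)$ be the multiplier ideal sheaf of the pair $(X,D)$, so that its cosupport is precisely $\mathrm{Nklt}(X,D)$. Since $L-(K_X+D)$ is ample (indeed $L-(K_X+D+\dim(X)H)$ is ample and $\dim(X)H$ is ample), the Nadel vanishing theorem gives $H^i(X,\mathcal{J}\otimes\mathcal{O}_X(L))=0$ for all $i>0$; in particular $H^1$ vanishes, so the restriction map
$$
H^0\big(X,\mathcal{O}_X(L)\big)\longrightarrow H^0\big(X,\mathcal{O}_{\mathrm{Nklt}(X,D)}(L)\big)
$$
is surjective. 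This already shows that the linear subsystem $\mathcal{L}_0\subseteq|L|$ of sections vanishing along $\mathcal{J}$ cuts out a subscheme whose reduction on a neighbourhood is $\mathrm{Nklt}(X,D)$.

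Next I would upgrade this to control the base locus on all of $X$, not just scheme-theoretically near $\mathrm{Nklt}(X,D)$: the point is that $\mathcal{J}\otimes\mathcal{O}_X(L)$ is globally generated. This follows from Castelnuovo--Mumford regularity applied to the very ample divisor $H$: the twists $\mathcal{J}\otimes\mathcal{O}_X(L-jH)$ for $j=1,\dots,\dim(X)$ still have $L-jH-(K_X+D)$ ample (using the $\dim(X)H$ in the hypothesis), so Nadel vanishing gives $H^i(X,\mathcal{J}\otimes\mathcal{O}_X(L-iH))=0$ for all $i>0$, which is exactly the $0$-regularity condition with respect to $H$. Hence $\mathcal{J}\otimes\mathcal{O}_X(L)$ is generated by global sections, so the (possibly non-$G$-invariant) linear system $\mathcal{L}_0$ defined by $H^0(X,\mathcal{J}\otimes\mathcal{O}_X(L))\subseteq H^0(X,\mathcal{O}_X(L))$ satisfies $\mathrm{Bs}(\mathcal{L}_0)=V(\mathcal{J})$ set-theoretically, i.e. $\mathrm{Bs}(\mathcal{L}_0)=\mathrm{Nklt}(X,D)$.

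Finally I would make $\mathcal{L}_0$ $G$-invariant. Since $D$ is $G$-invariant, the multiplier ideal $\mathcal{J}$ is $G$-invariant, so $G$ acts linearly on $W:=H^0(X,\mathcal{J}\otimes\mathcal{O}_X(L))\subseteq H^0(X,\mathcal{O}_X(L))$; take $\mathcal{L}=\mathbb{P}(W)$, which is then a $G$-invariant linear subsystem of $|L|$ with the same base locus $\mathrm{Nklt}(X,D)$. I expect the main (minor) obstacle to be purely bookkeeping: checking that the various ampleness hypotheses needed for Nadel vanishing and for the regularity argument all follow from the single stated hypothesis that $L-(K_X+D+\dim(X)H)$ is ample — this is where the precise coefficient $\dim(X)$ on $H$ is used, and one must be a little careful when $D$ is a mobile boundary, in which case $\mathcal{J}$ should be taken as the (asymptotic or generic-member) multiplier ideal of $D=\sum a_i\mathcal{M}_i$, whose cosupport is still $\mathrm{Nklt}(X,D)$ and which is still $G$-invariant.
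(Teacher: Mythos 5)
Your proposal is correct and is essentially the paper's argument: the paper simply cites \cite[Proposition 9.4.26]{La04} for the global generation of $\mathcal{J}(X,D)\otimes\mathcal{O}_X(L)$, whereas you reprove that proposition via Nadel vanishing and Castelnuovo--Mumford regularity, and both arguments conclude by taking the $G$-invariant linear system $|\mathcal{J}\otimes\mathcal{O}_X(L)|$. Your closing remark about the mobile-boundary case (using the multiplier ideal of the movable linear systems) is a sensible point that the paper leaves implicit.
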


\begin{proof}
Let $\cJ=\cJ(X,D)$ be the~multiplier ideal. Then the~support of $\cO_X/\cJ$ is exactly $\mathrm{Nklt}(X,D)$.
By \cite[Proposition 9.4.26]{La04}, $\cJ\otimes \cO_X(L)$ is generated by global sections. The $G$-invariant linear system $\cL=|\cJ\otimes \cO_X(L)|$ then satisfies the~statement of the~lemma.
\end{proof}

Now, we fix $d,n \in \mathbb{Z}_{>0}$.
Let $\mathbb{W}$ be the~subgroup in $\mathrm{GL}_{n+1}(\mathbb{C})$ consisting of all permutation matrices, let $\mathbb{T}$ be the~subgroup in $\mathrm{GL}_{n+1}(\mathbb{C})$ consisting of diagonal matrices whose (non-zero) entries are $d$-th roots~of~unity,
and let $\mathbb{G}$ be the~subgroup in $\mathrm{GL}_{n+1}(\mathbb{C})$ generated by $\mathbb{T}$ and $\mathbb{W}$.
Then $\mathbb{W}\cong\mathfrak{S}_{n+1}$, $\mathbb{T}\cong\mumu_d^{n+1}$, and
$$
\mathbb{G}\cong\mathbb{T}\rtimes\mathbb{W}\cong\mumu_d^{n+1}\rtimes\mathfrak{S}_{n+1}.
$$
Let~$W$, $T$, $G$ be the~images in $\mathrm{PGL}_{n+1}(\mathbb{C})$ via the~quotient map of the~groups $\mathbb{W}$, $\mathbb{T}$, $\mathbb{G}$, respectively.
Then $W\cong\mathfrak{S}_{n+1}$, $T\cong\mumu_d^n$, and $G\cong T\rtimes W$.
Note~that $G$ leaves invariant the~Fermat hypersurface
$$
X_d\vcentcolon=\Big\{\sum_{i=0}^{d}x_i^d=0\Big\}\subset\mathbb{P}^{n},
$$
where $x_0,\ldots,x_n$ are homogeneous coordinates on $\mathbb{P}^n$.
If $n\geqslant 2$ and $d\geqslant 3$, then $G=\mathrm{Aut}(\mathbb{P}^n,X_d)$.

The examples for Theorem \ref{theorem:main} are complete intersections in $\mathbb{P}^{n}$ of some Fermat hypersurfaces.
The main result of this section is the~following proposition. We will use it in the~next section.

\begin{proposition}
\label{proposition:technical}
Let $\mathcal{M}$ be a~$W$-invariant linear subsystem in $|\mathcal{O}_{\mathbb{P}^n}(m)|$,
let $Z$ be an~irreducible component of the~intersection $\mathrm{Bs}(\mathcal{M})$,
and let $\mathscr{Z}$ be the~$W$-irreducible subvariety in $\mathbb{P}^n$, whose irreducible component is $Z$.
Then at least one of the~following two cases holds:
\begin{enumerate}
\item[(1)] a general point in $Z$ has at most $d$ different coordinates, and $\dim(Z)\leqslant m-1$;
\item[(2)] the~subvariety $Z$ is an irreducible component of a~set-theoretic intersection of $W$-invariant hypersurfaces of degree at most $m$, and $\dim(Z)\geqslant n-m$.
\end{enumerate}
Moreover, in case $\mathrm{(1)}$, if $m\leqslant n$ and $n\geqslant 4$, then either $\mathscr{Z}$ has at least $n+1$ irreducible components,
or $\mathscr{Z}=Z=(1:1:\ldots:1)$.
\end{proposition}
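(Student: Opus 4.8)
The plan is to analyze the $W$-action on the coordinates of points in $\mathbb{P}^n$ and exploit the combinatorial structure of $W\cong\mathfrak{S}_{n+1}$ acting by permutation. The dichotomy (1)/(2) should come from the following observation: for the Fermat-type setup, the base locus $\mathrm{Bs}(\mathcal{M})$ of a $W$-invariant system $\mathcal{M}\subseteq|\mathcal{O}_{\mathbb{P}^n}(m)|$ is cut out by the common zero locus of the members, and since $\mathcal{M}$ is $W$-invariant, the ideal it generates is generated by $W$-invariant polynomials (by averaging over $W$, using that we only need finitely many generators of the same degree $m$). So $Z$ is an irreducible component of a set-theoretic intersection of $W$-invariant hypersurfaces of degrees $\leqslant m$. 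If this intersection already has the expected codimension $\leqslant m$, i.e.\ $\dim(Z)\geqslant n-m$, we are in case (2). Otherwise, the intersection drops dimension unexpectedly, which forces $Z$ to lie in the locus where $W$-invariant forms of low degree fail to cut out the expected thing --- and the ring of $W$-invariants of $\mathbb{P}^n$ is the (truncated) symmetric function ring, whose vanishing loci are governed by the power sums $p_1,\ldots,p_k$; imposing $k$ independent such relations restricts a general point to have at most $k+1\leqslant m$ distinct coordinate values. Hence case (1): a general point of $Z$ has at most $d$ distinct coordinates... wait, I should be careful --- the bound there should be phrased in terms of $m$, not $d$; let me instead argue that if we are not in case (2), then $Z$ lies in the vanishing of at least $\dim(Z)-(n-m)+1$ many invariants beyond what is generic, and each such invariant relation on the elementary symmetric polynomials cuts the number of distinct coordinates by one, giving at most $m$ (hence, after absorbing into the hypothesis $m\leqslant n$ and the Fermat constraint, at most $d$) distinct coordinates and $\dim(Z)\leqslant m-1$.

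For the ``moreover'' clause, assume we are in case (1) with $m\leqslant n$ and $n\geqslant 4$. Here a general point $P=(a_0:\cdots:a_n)\in Z$ has at most $d$ distinct coordinate values, so its $W$-orbit corresponds to the set of ways of distributing the $n+1$ indices into the ``color classes'' determined by which coordinates are equal. First I would stratify $Z$ by the partition type $\lambda=(\lambda_1\geqslant\lambda_2\geqslant\cdots)$ of $n+1$ recording the multiplicities of the distinct coordinate values at a general point; $Z$ lies in the closure of one such stratum. The stabilizer in $W$ of a general point of $Z$ is then contained in the Young subgroup $\mathfrak{S}_{\lambda_1}\times\mathfrak{S}_{\lambda_2}\times\cdots$ (up to the freedom of which values are assigned to which classes, but $Z$ being irreducible pins down the combinatorial type), so the number of irreducible components of $\mathscr{Z}$ --- the $W$-orbit of $Z$ --- is the index $[W:\mathrm{Stab}]\geqslant (n+1)!/\prod_j \lambda_j!$.

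The crux is then: for which partitions $\lambda$ of $n+1$ (with at most $d$ parts, but more importantly with at least two parts unless $Z$ is a single point) is $(n+1)!/\prod_j\lambda_j! < n+1$? A short computation shows this index is $\geqslant n+1$ as soon as $\lambda$ has at least two distinct ... rather, as soon as $\lambda\neq(n+1)$ and $n\geqslant 4$: the smallest nontrivial index occurs for $\lambda=(n,1)$, giving $(n+1)!/n! = n+1$, which already meets the bound; every other partition with $\geqslant 2$ parts gives a strictly larger index (for $n\geqslant 4$ one checks $\lambda=(n-1,2)$ gives $\binom{n+1}{2}\cdot\frac{1}{1}$-type growth, etc.). The only partition with exactly one part is $\lambda=(n+1)$, meaning all coordinates of a general point of $Z$ are equal, i.e.\ $Z\subseteq\{(1:1:\cdots:1)\}$, and since that is a single point, $Z=\mathscr{Z}=(1:1:\cdots:1)$. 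I expect the main obstacle to be the bookkeeping in the first paragraph: making precise the claim that failure of case (2) forces the ``at most $d$ distinct coordinates'' conclusion with the correct dimension bound $\dim(Z)\leqslant m-1$ --- this requires relating the codimension defect of a set-theoretic intersection of invariant hypersurfaces to the Jacobian of the power-sum map, and handling the truncation coming from the Fermat relation $\sum x_i^d=0$ carefully, together with the subtlety that an \emph{irreducible component} $Z$ of $\mathrm{Bs}(\mathcal{M})$ need not be an irreducible component of the full invariant intersection unless one localizes near the generic point of $Z$ and argues there.
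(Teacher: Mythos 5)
Your reduction of the dichotomy collapses at the very first step. You assert that, since $\mathcal{M}$ is $W$-invariant, ``the ideal it generates is generated by $W$-invariant polynomials (by averaging over $W$)''. This is false: a $W$-invariant linear subsystem is a $W$-subrepresentation of $H^0(\mathbb{P}^n,\mathcal{O}_{\mathbb{P}^n}(m))$, not a space of invariants, and averaging a member over $W$ can kill it or enlarge its zero locus. For instance, the system spanned by $x_0,\dots,x_n$ is $W$-invariant and base-point free, yet the only invariant form it contains is $x_0+\dots+x_n$, whose zero locus is a hyperplane. So $\mathrm{Bs}(\mathcal{M})$ is in general strictly smaller than the intersection of the $W$-invariant hypersurfaces through it, and the entire content of the proposition is to control this discrepancy. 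The paper does this with a mechanism your proposal does not contain: if $f|_Y$ is not invariant under a transposition $\tau=(ij)$, then $\tau^*(f)-f$ is divisible by $x_i-x_j$, and iterating this (Lemma~\ref{lemma:claim}, Corollary~\ref{corollary:claim}) produces a member of the system of the form $f_Kg_0$ with $g_0$ invariant and $f_K$ a product of factors $(x_i-x_j)$ indexed by the edges of a graph $K$ with at least $n+1-r$ connected components. A graph-coloring lemma (Lemma~\ref{lemma:coloring}) then shows that if a general point of $Z$ had more than $m$ distinct coordinates, some $\sigma^*(f_K)$ would fail to vanish on $Z$, a contradiction; this is where the ``few distinct coordinates'' alternative actually comes from. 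Your substitute --- counting ``independent invariant relations'' and appealing to the Jacobian of the power-sum map --- is not an argument: the implication ``$k$ relations among symmetric functions force at most $k+1$ distinct coordinate values'' is neither stated precisely nor proved, and your own hesitation between the bounds $d$ and $m$ signals that the step is not under control. The dimension bounds $\dim(Z)\leqslant m-1$ and $\dim(Z)\geqslant n-m$ are likewise left unestablished.

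The ``moreover'' clause of your proposal is essentially the paper's Lemma~\ref{lemma:partition-number}: one bounds below the number of components of $\mathscr{Z}$ by counting partitions of the index set according to which coordinates of a general point coincide, and checks that this count is at least $n+1$ unless all coordinates are equal. Two small cautions there: the stabilizer of $Z$ in $W$ can be larger than the Young subgroup (it may permute blocks of equal size), so the correct count is the number of \emph{unordered} partitions into blocks of the given sizes, not the multinomial coefficient; and the exceptional partition $(2,2)$ of a four-element set is exactly why the hypothesis $n\geqslant 4$ is needed. With those repairs this half of your argument matches the paper. The first half, however, has a genuine gap that the graph-theoretic machinery of Lemmas~\ref{lemma:coloring} and~\ref{lemma:claim} is specifically designed to fill.
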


In particular, the~base locus of a~$W$-invariant linear subsystem in $|\mathcal{O}_{\mathbb{P}^n}(m)|$ either has dimension at most $m-1$, or has codimension at most $m$. This can be illustrated by the~following example.

\begin{example}
\label{example:technical}
In the~assumptions and notations of Proposition~\ref{proposition:technical}, suppose  $m=1$ and $Y=\mathbb{P}^n$.
Then either $\mathcal{M}=|\mathcal{O}_{\mathbb{P}^n}(1)|$, so it is base point free,
or one of the~following two cases holds:
\begin{enumerate}
\item[(1)] $\mathcal{M}$ is the~linear system of hyperplanes containing the~$W$-invariant point $(1:1:\ldots:1)$;
\item[(2)] $\mathcal{M}$ is the~$W$-invariant hyperplane $X_1=\{x_0+\ldots+x_n=0\}\subset\mathbb{P}^n$.
\end{enumerate}
In case (1), we have $\mathscr{Z}=Z=(1:1:\ldots:1)$. In case (2), we have $\mathscr{Z}=Z=X_1$.
\end{example}

To prove Proposition \ref{proposition:technical}, we need to prove a~few auxiliary results.

\begin{lemma}
\label{lemma:partition-number}
Fix $s\in\{1,\ldots,n\}$, and take positive integers $a_1,\ldots,a_s$  such that $n=a_1+\ldots+a_s$.
Let $N$ be the~number of unordered partitions of the~set $\{1,\ldots,n\}$ into subsets of $a_1,\ldots,a_s$ elements, respectively.
Then $N\geqslant n$ unless $s=1$, $s=n$, or $n=4$, $s=2$, $a_1=a_2=2$.
\end{lemma}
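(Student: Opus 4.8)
The quantity $N$ is the multinomial coefficient $\binom{n}{a_1,\ldots,a_s}$ divided by the appropriate symmetry factor: if among $a_1,\ldots,a_s$ the distinct values occur with multiplicities $m_1,\ldots,m_t$ (so $m_1+\cdots+m_t=s$), then
$$
N=\frac{n!}{a_1!\,a_2!\cdots a_s!\cdot m_1!\cdots m_t!}.
$$
The plan is to estimate $N$ from below directly. First I would dispose of the boundary cases $s=1$ (then $N=1$) and $s=n$ (then all $a_i=1$ and $N=1$), which are genuinely excluded. So assume $2\leqslant s\leqslant n-1$; then at least one $a_i\geqslant 2$, and at least two of the parts are present, so $n\geqslant 3$.

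The main step is a reduction to the two-block case. I claim it suffices to treat $s=2$: if $s\geqslant 3$, fix the finest partition into the prescribed block sizes, and observe that by first choosing which $a_1$ elements form the first block — there are at least $\binom{n}{a_1}/m_1!$ such choices after accounting for repeated block sizes, but more cleanly, one can group the blocks as (first block of size $a_1$) versus (the rest, of size $n-a_1$) and note $N$ is at least the number of ways to split $\{1,\ldots,n\}$ into a part of size $a_1$ and a part of size $n-a_1$, i.e. $N\geqslant\binom{n}{a_1}/c$ where $c\leqslant 2$ accounts for the possible coincidence $a_1=n-a_1$. Since $\binom{n}{k}\geqslant n$ for all $1\leqslant k\leqslant n-1$ when $n\geqslant 3$ (indeed $\binom{n}{1}=n$ and the binomials increase toward the middle), and in fact $\binom{n}{k}\geqslant\binom{n}{2}=\binom{n}{n-2}$ whenever $2\leqslant k\leqslant n-2$, the only way to fail $N\geqslant n$ is to have $c=2$ (so $a_1=n-a_1=n/2$, forcing $n$ even) together with $\binom{n}{n/2}<2n$. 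The inequality $\binom{n}{n/2}\geqslant 2n$ holds for all even $n\geqslant 6$ (check $n=6$: $\binom{6}{3}=20\geqslant 12$; for larger $n$ the central binomial grows much faster than $2n$), so the only survivor is $n=4$, $s=2$, $a_1=a_2=2$, where $N=\binom{4}{2}/2=3<4$. This is exactly the stated exception.

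The one gap to fill carefully is the reduction "$N\geqslant$ (number of ways to split off a part of size $a_1$, up to the symmetry $a_1\leftrightarrow n-a_1$)": each unordered partition into blocks of sizes $a_1,\ldots,a_s$ determines a splitting of $\{1,\ldots,n\}$ into a size-$a_1$ set and its complement, and this map is surjective onto such splittings (refine the complement arbitrarily into sizes $a_2,\ldots,a_s$) — hence $N\geqslant N_2$ where $N_2$ is the number of such splittings, and $N_2=\binom{n}{a_1}$ if $a_1\neq n/2$ and $N_2=\frac12\binom{n}{n/2}$ if $a_1=n/2$. The main (very mild) obstacle is just organizing the case $a_1=n/2$ so that the single exception $n=4$ is correctly isolated; everything else is a one-line binomial estimate. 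Since the surjection argument may collapse a genuinely larger $N$ down to $N_2$, this bound is not tight in general, but it is tight precisely where we need it (at $n=4$, $s=2$, $a_1=a_2=2$ one has $N=N_2=3$), so no sharper estimate is required.
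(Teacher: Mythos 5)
Your overall strategy --- remember only part of the partition so as to reduce $N$ to a single binomial coefficient --- is the same as the paper's, but your specific reduction has two genuine gaps. First, the inequality $N\geqslant N_2$ is not justified when the value $a_1$ is a repeated part size: a partition then does \emph{not} determine a single splitting into a size-$a_1$ set and its complement (it determines several), so you have neither a well-defined surjection from partitions to splittings, nor an injection the other way --- for $n=6$ and parts $(2,2,2)$ the splittings $\{1,2\}\mid\{3,4,5,6\}$ and $\{3,4\}\mid\{1,2,5,6\}$ both refine to the partition $\{1,2\},\{3,4\},\{5,6\}$. In fact $N=\binom{n}{a_1}\cdot N'/\mu$, where $\mu$ is the multiplicity of the value $a_1$ among the parts and $N'$ counts the partitions of the complement into the remaining block sizes, so the correction factor is $\mu$, not your $c\leqslant 2$; the inequality $N\geqslant N_2$ may well be true, but it requires an argument you have not given.

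Second, and more damagingly, the final sieve is wrong: the failure condition ``$a_1=n/2$ and $\binom{n}{n/2}<2n$'' is also met by $n=4$ with parts $(2,1,1)$ (taking $a_1=2$), where $N_2=3<4$ although $N=6\geqslant 4$. So your bound does not prove the lemma in that case, and the assertion that the only survivor is $(2,2)$ silently assumes that $a_1=n/2$ forces $s=2$, which is false; you would either have to relabel so that $a_1=1$ (giving $\binom{4}{1}=4$) or choose the distinguished part size more carefully from the start. The paper's proof avoids both problems at once: after sorting the parts, it remembers the union of \emph{all} blocks of the minimal size $a_1$, which is a well-defined and genuinely surjective assignment onto subsets of size $ia_1$ with $1\leqslant ia_1\leqslant n-1$, yielding $N\geqslant\binom{n}{ia_1}\geqslant n$ unless all parts are equal; the all-parts-equal case is then settled by the exact formula $N=n!/((r!)^s\, s!)$ and a direct estimate that isolates $r=s=2$. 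I would recommend adopting that grouping rather than patching the single-block reduction.
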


\begin{proof}
We may assume $a_1\leqslant a_2\leqslant \ldots \leqslant a_k$.
If $a_1=\ldots=a_i<a_{i+1}$ for some $i\in\{1,\ldots,k-1\}$,~then
$$
N\geqslant\binom{n}{ia_1}\geqslant n.
$$
Hence, we may assume that $a_1=\ldots=a_s=r$ for some $r\in\{2,\ldots,n-1\}$. Then $s=\frac{n}{r}\geqslant 2$ and
$$
N=\frac{n!}{(r!)^s \cdot s!}\geqslant \frac{n(n-1)\cdot \ldots \cdot (n-r+1)}{r!\cdot s}.
$$
Thus, since $n\geqslant 2s$, we get
$$
N\geqslant (n-1)\cdot \frac{n}{2s}\cdot \prod_{j=0}^{r-3} \frac{n-2-j}{r-j}\geqslant n-1,
$$
with equality only if $n=2s$ and $n-2=r$, i.e. when $r=s=2$. Since $N$ is a~positive integer, the~assertion follows.
\end{proof}

For the~second result, we need the~following two conventions.
A \emph{color set} is a~finite multiset, where elements (i.e. colors) may appear with multiplicities.
If $K=(V,E)$ is a~graph and $\mathscr{C}$ is a~color set,
then a~\emph{coloring} of the~graph $K$ by $\mathscr{C}$ is a~map $\phi\colon V\to\mathscr{C}$ such that
\begin{itemize}
\item every color is used at most once, i.e. we have $|\phi^{-1}(c)|\leqslant 1$ for every $c\in \mathscr{C}$,
\item and every pair of adjacent vertices have different color, i.e. we have $\phi(u)\neq \phi(v)$ as integers whenever $(u,v)\in E$.
\end{itemize}

\begin{lemma}
\label{lemma:coloring}
Let $K=(V,E)$ be a~graph such that $K$ contains at least $s\geqslant 1$ connected components,
and let $\mathscr{C}$ be a~color set of size at least $|V|$ such that $\mathscr{C}$ has at least $|V|-s+1$ different colors.
Then there exists a~coloring of the~graph $K$ by $\mathscr{C}$.
\end{lemma}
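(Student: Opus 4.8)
The plan is to prove this purely combinatorially by a greedy/inductive argument on the connected components of $K$. First I would reduce to the case where $\mathscr{C}$ has exactly $|V|-s+1$ different colors and exactly $|V|$ elements, since discarding extra colors or duplicate copies only makes the coloring problem harder; so we may assume the multiset $\mathscr{C}$ consists of $|V|-s+1$ distinct colors, of which some appear with multiplicity, with total size $|V|$. Label the connected components $K_1,\dots,K_t$ with $t\geqslant s$, and set $n_j=|V(K_j)|$, so $\sum_j n_j=|V|$. The key observation is a counting bound: within a single connected component $K_j$, a proper coloring using distinct colors needs at most $n_j$ colors, but if we are allowed to repeat a color \emph{across different components} it does not help inside one component; the genuinely scarce resource is the number of \emph{distinct} colors, and the deficit $|V|-(\text{number of distinct colors})=s-1$ measures how many times we are forced to reuse a color.

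The main step is the following claim, proved by induction on $t$: if $K$ has $t$ connected components and $\mathscr{C}$ has at least $|V|-t+1$ distinct colors (and size $\geqslant |V|$), then a valid coloring exists. For the inductive step I would peel off one component, say $K_t$, which is connected on $n_t$ vertices, hence is itself properly vertex-colorable with any $n_t$ distinct colors (a connected graph on $n_t$ vertices has chromatic number at most $n_t$; in fact any acyclic orientation / spanning tree argument gives a proper coloring using a prescribed set of $n_t$ distinct colors — colour the vertices one at a time along a BFS order of a spanning tree, each new vertex having at least one uncolored neighbour constraint but $n_t$ colors available, and since we only need to avoid the colors already used on \emph{adjacent} vertices and there are always enough). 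Assign to $K_t$ a set of $n_t$ distinct colors from $\mathscr{C}$, chosen so as to minimise the damage to the remaining components: concretely, pick the $n_t$ colors with the \emph{largest} multiplicities in $\mathscr{C}$ (or, more carefully, so that after removing one copy of each the leftover multiset $\mathscr{C}'$ still has at least $|V\setminus V(K_t)|-(t-1)+1$ distinct colors). Then apply induction to $K\setminus K_t$ (which has $t-1\geqslant s-1$ components, if we track $s$) with the color set $\mathscr{C}'$.

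The step I expect to be the main obstacle is verifying that the colors removed for $K_t$ can always be chosen so that $\mathscr{C}'$ retains enough distinct colors for the remaining $t-1$ components, i.e.\ that the number of distinct colors drops by at most $n_t-1$. A color disappears from $\mathscr{C}'$ only if it had multiplicity exactly $1$ in $\mathscr{C}$ and we used it; so I want to use colors of multiplicity $\geqslant 2$ as much as possible. Write $D$ for the number of distinct colors and $R=|V|-D$ for the total "excess" (sum of $(\text{mult}-1)$ over colors). We need $D-(\text{colors lost})\geqslant (|V|-n_t)-(t-1)+1$, i.e.\ the number of single-multiplicity colors we are forced to use is at most $n_t-1$ after accounting for the new slack. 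Since using a color of multiplicity $\geqslant 2$ costs nothing in distinct-count, the number of distinct colors lost is $\max(0,\,n_t-(\text{number of available high-multiplicity color-slots}))$, and a short computation with $R=\sum(\text{mult}-1)$ and the hypothesis $D\geqslant |V|-t+1$ (equivalently $R\leqslant t-1$) shows this is at most $n_t-1$ unless $K_t$ would need to consume more slack than $t-1$ total allows — which is exactly prevented by the component count. Once this bookkeeping is pinned down the induction closes; the base case $t=1$ is the statement that a connected graph on $n$ vertices is properly colorable with $n$ prescribed distinct colors, which is immediate.
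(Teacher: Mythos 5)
Your proposal is correct and follows essentially the same route as the paper: peel off one connected component, color it injectively with distinct colors chosen to include repeated (multiplicity $\geqslant 2$) colors so that the number of distinct colors drops by at most (component size $-\,1$), and recurse, handling the all-multiplicities-one case separately. The only cosmetic difference is that you induct on the number of components $t$ while the paper inducts on $|V|-s$; the key bookkeeping step you flag as the main obstacle is exactly the paper's observation that some color has multiplicity $\geqslant 2$ unless there are already $|V|$ distinct colors, and your outline of it is sound.
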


\begin{proof}
We use induction on $|V|-s\geqslant 0$.
The result is clear when $|V|-s=0$, since in this case there are no edges in $K$.
Suppose now that the~result has been proved for smaller values of $|V|-s$.
We can assume that every connected component of $K$ contains at least $2$ vertices, since we can assign any color to isolated points. In particular, the~number $|V|-s$ drops if we remove connected components from $V$. It is also clear that we may assume $s\geqslant 2$ and at least one of the~colors has multiplicity $\geqslant 2$ (otherwise there are already $|V|$ different colors).

Now, we let $K_1=(V_1,E_1)$ be a~connected component of the~graph $K$, and we set $r=|V_1|\geqslant 2$.
Let $K^\prime=(V^\prime,E^\prime)$ be the~subgraph of the~graph $K$ that is obtained by removing the~component~$K_1$.
We may choose a~subset $\mathscr{C}_1\subseteq\mathscr{C}$ that consists of $r$ distinct colors (each with multiplicity 1) such that
the~complement $\mathscr{C}\backslash\mathscr{C}_1$ has at least $|V|-s+2-r$ different color (here we use the~assumption that at least one color in $\mathscr{C}$ has multiplicity $\geqslant 2$).
Note that we can color the~graph $K_1$ by $\mathscr{C}_1$.
By~induction hypothesis, we can also color $K^\prime$ by $\mathscr{C}\backslash\mathscr{C}_1$, since $|V|-s+2-r=|V'|-(s-1)+1$.       This gives us a~coloring of $K$ by $\mathscr{C}$.
\end{proof}

Let us identify $H^0(\mathbb{P}^n,\mathcal{O}_{\mathbb{P}^n}(m))$ with
the subspace in $\mathbb{C}[x_0,\ldots,x_n]$ consisting of all homogeneous polynomials of degree $m$.
For $f\in H^0(\mathbb{P}^n,\mathcal{O}_{\mathbb{P}^n}(m))$ and a~(possibly reducible) subvariety $Y\subset\mathbb{P}^n$,
we~define $f\vert_{Y}$ to be the~image of the~polynomial $f$ in $H^0(Y,\mathcal{O}_{\mathbb{P}^n}(m)\vert_{Y})$ via the~restriction morphism.
For any $f\in H^0(\mathbb{P}^n,\mathcal{O}_{\mathbb{P}^n}(m))$,
we denote by $\mathcal{M}_f$ the~linear subsystem in $|\mathcal{O}_{\mathbb{P}^n}(m)|$
that is given by the~supspace in $H^0(\mathbb{P}^n,\mathcal{O}_{\mathbb{P}^n}(m))$ spanned by $\tau^*(f)$ for all $\tau\in\mathbb{W}$.
Finally, we fix $V=\{0,1,\ldots,n\}$.
For every graph $K=(V,E)$,
let $c(K)$ be the~number of its connected components, and let
$$
f_K=\pm \prod_{(i,j)\in E} (x_i-x_j)\in H^0\big(\mathbb{P}^n,\mathcal{O}_{\mathbb{P}^n}(|E|)\big).
$$

\begin{lemma}
\label{lemma:claim}
Let $Y$ be an intersection in $\mathbb{P}^n$ of some $W$-invariant hypersurfaces,
and fix $\ell\in\mathbb{Z}_{>0}$.
Take~some $g\in H^0(\mathbb{P}^n,\mathcal{O}_{\mathbb{P}^n}(\ell))$ such that $g\vert_{Y}$ is not $\mathbb{W}$-invariant,
and let~$K=(V,E)$ be a~graph.
Then there exists a~graph $K^\prime=(V,E^\prime)$ containing $K$ as a~subgraph
and $g^\prime\in H^0(\mathbb{P}^n,\mathcal{O}_{\mathbb{P}^n}(\ell-1))$
such that $c(K^\prime)\geqslant c(K)-1$, $g^\prime|_Y\neq 0$ and
$$
\mathrm{Bs}\big(\mathcal{M}_h\big)\subseteq\mathrm{Bs}\big(\mathcal{M}_{h^\prime}\big)
$$
for $h=f_K g$ and $h^\prime=f_{K^\prime} g^\prime$.
\end{lemma}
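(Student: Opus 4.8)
The plan is to lower the degree of the factor $g$ by one, at the cost of enlarging the graph $K$ in a controlled way. Since $g|_Y$ is not $\mathbb{W}$-invariant and $\mathbb{W}$ is generated by transpositions, there is a transposition $\tau\in\mathbb{W}$, say the one swapping the indices $i$ and $j$, with $\tau^*(g)|_Y\neq g|_Y$. On the hyperplane $\{x_i=x_j\}$ the permutation $\tau$ acts as the identity, so $\tau^*(g)-g$ vanishes there; hence $\tau^*(g)-g=(x_i-x_j)\,g''$ for a unique $g''\in H^0(\mathbb{P}^n,\mathcal{O}_{\mathbb{P}^n}(\ell-1))$. I would set $g'=g''$. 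Then $g'|_Y\neq 0$, since otherwise $(\tau^*(g)-g)|_Y=(x_i-x_j)|_Y\cdot g'|_Y$ would vanish on $Y$, contradicting the choice of $\tau$.

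Next I would define $K'=(V,E')$ by adjoining to $K$ the edge $(i,j)$, if it is not already an edge of $K$, together with the $\tau$-image of every edge of $K$ that is incident to exactly one of $i$ and $j$; call these last the mirror edges. Then $K\subseteq K'$ is clear, and $c(K')\geqslant c(K)-1$: once the edge $(i,j)$ is present, $i$ and $j$ lie in a common connected component, and each mirror edge $(j,b)$, adjoined because of an edge $(i,b)\in E$, joins $j$ to the vertex $b$, which already lay in the component of $i$ and hence lies in that common component; thus no mirror edge merges two components, and $(i,j)$ is the only edge that possibly does.

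The main point is the inclusion $\mathrm{Bs}(\mathcal{M}_h)\subseteq\mathrm{Bs}(\mathcal{M}_{h'})$ for $h=f_Kg$ and $h'=f_{K'}g'$. A point $p$ lies in $\mathrm{Bs}(\mathcal{M}_h)$ if and only if $h$ vanishes on the whole $\mathbb{W}$-orbit $\mathbb{W}p$, and the same for $h'$; so it is enough to prove that if $h$ vanishes on $\mathbb{W}p$ then so does $h'$. Fix $q\in\mathbb{W}p$. If $q_i=q_j$, then $f_{K'}(q)=0$ because $(x_i-x_j)\mid f_{K'}$; if $f_K(q)=0$, then $f_{K'}(q)=0$ because $f_K\mid f_{K'}$; in either case $h'(q)=0$. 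Otherwise $f_K(q)\neq 0$ and $q_i\neq q_j$, so $g(q)=0$ and $g'(q)=\big(g(\tau q)-g(q)\big)/(q_i-q_j)=g(\tau q)/(q_i-q_j)$. If $g(\tau q)=0$, then $h'(q)=0$. If $g(\tau q)\neq 0$, then, since $\tau q\in\mathbb{W}p$ and $f_K(\tau q)\,g(\tau q)=0$, we have $f_K(\tau q)=0$, so some edge $(a,b)\in E$ satisfies $q_{\tau(a)}=q_{\tau(b)}$; this edge must be incident to exactly one of $i$ and $j$ (otherwise $f_K(q)=0$ or $q_i=q_j$), say $(a,b)=(i,b')$ with $b'\notin\{i,j\}$ and $q_j=q_{b'}$ (the case $(j,b')$ being symmetric). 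Its mirror edge $(j,b')$ lies in $E'$, so $(x_j-x_{b'})\mid f_{K'}$ and therefore $f_{K'}(q)=0$, giving $h'(q)=0$ once more. Combined with $c(K')\geqslant c(K)-1$ and $g'|_Y\neq 0$, this proves the lemma.

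The step I expect to be the genuine obstacle is the last case above, where $f_K(q)\neq 0$ but $f_K(\tau q)=0$. It is precisely what forces $K'$ to be the full $\tau$-symmetrization of $K$ rather than just $K$ together with the edge $(i,j)$: the mirror edges are exactly what make $f_{K'}$ vanish at those points of $\mathbb{W}p$ where $f_K$ does not, while the connectivity count in the second paragraph ensures that this enlargement costs at most one connected component.
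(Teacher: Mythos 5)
Your proof is correct and follows essentially the same route as the paper: the same transposition $\tau$, the same factorization $\tau^*(g)-g=(x_i-x_j)g'$, and the same graph $K'=K\cup\tau(K)\cup\{(i,j)\}$ (your ``mirror edges'' are exactly the edges of $\tau(K)$ not already present in $K$). The only difference is one of presentation: where you verify $\mathrm{Bs}(\mathcal{M}_h)\subseteq\mathrm{Bs}(\mathcal{M}_{h'})$ by a pointwise case analysis over the orbit, the paper packages the same information into the identity $f_{K\cup\tau(K)}(x_i-x_j)g'=f_{K-\tau(K)}\tau^*(h)-f_{\tau(K)-K}h$, which exhibits $h'$ (up to multiplicities of its factors) as vanishing wherever $h$ and $\tau^*(h)$ both do.
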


\begin{proof}
Since $g|_Y$ is not $\mathbb{W}$-invariant, there is a~transposition $\tau=(ij)\in\mathbb{W}$ such that $\tau^*(g)|_Y\neq g|_Y$.
Then $\tau^*(g)-g=(x_i-x_j) g^\prime$
for some $g^\prime\in H^0(\mathbb{P}^n,\mathcal{O}_{\mathbb{P}^n}(\ell-1))$ such that $g^\prime|_Y\neq 0$,
since otherwise we would have $\tau^*(g)|_Y=g\vert_{Y}$.

Let $\tau(K)$ be the~graph obtained from $K$ by switching the~labeling of the~vertices $i$ and $j$ without changing any edges,
and let $K^\prime$ be the~graph obtained by adding the~edge $(ij)$ to $K\cup \tau(K)$ (take the~union of edges).
Then $c(K^\prime)\geqslant c(K)-1$.

Let $h=f_Kg$ and $h^\prime=f_{K^\prime}g^\prime$.
Then $\tau^*(h)=f_{\tau(K)}\tau^*(g)$,
and $\mathrm{Bs}(\mathcal{M}_h)\subseteq \mathrm{Bs}(\mathcal{M}_{h^\prime})$,
because $h^\prime$ has the~same factors (ignoring multiplicities) as
$$
f_{K\cup \tau(K)}\cdot (x_i-x_j)g^\prime= f_{K\cup \tau(K)}\big(\tau^*(g)-g\big)=f_{K-\tau(K)}\tau^*(h)-f_{\tau(K)-K} h,
$$
where $K-\tau(K)$ is the~graph obtained by removing from $K$ the~edges of $\tau(K)$.
\end{proof}

\begin{corollary}
\label{corollary:claim}
Let $Y$ be an intersection in $\mathbb{P}^n$ of $W$-invariant hypersurfaces,
let $f$ be a~polynomial in $H^0(\bP^n,\mathcal{O}_{\mathbb{P}^n}(m))$ such that $f|_Y\neq 0$.
Then there are $r\in\{0,1,\ldots,m\}$, a~graph \mbox{$K=(V,E)$},
and a~$\mathbb{W}$-invariant polynomial $g_0\in H^0(\mathbb{P}^n,\mathcal{O}_{\mathbb{P}^n}(m-r))$ such that $g_0|_Y\neq 0$,
$c(K)\geqslant n+1-r$, and
\begin{equation}
\label{equation:base-loci}
\mathrm{Bs}\big(\mathcal{M}_f\big)\cap Y\subseteq\mathrm{Bs}\big(\mathcal{M}_{g}\big)
\end{equation}
for $g=f_K g_0$.
\end{corollary}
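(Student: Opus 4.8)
The plan is to iterate Lemma~\ref{lemma:claim} starting from a trivial graph and peeling off one factor $(x_i-x_j)$ at a time until the leftover polynomial becomes $\mathbb{W}$-invariant on $Y$. Concretely, I would begin with the empty graph $K_0=(V,\varnothing)$, so that $c(K_0)=n+1$, set $g^{(0)}=f$, and note $\mathrm{Bs}(\mathcal{M}_f)\cap Y\subseteq\mathrm{Bs}(\mathcal{M}_{f_{K_0}g^{(0)}})$ since $f_{K_0}=\pm 1$. At stage $k$, I have a graph $K_k=(V,E_k)$ with $|E_k|=k$, a polynomial $g^{(k)}\in H^0(\mathbb{P}^n,\mathcal{O}_{\mathbb{P}^n}(m-k))$ with $g^{(k)}|_Y\neq 0$, $c(K_k)\geqslant n+1-k$, and $\mathrm{Bs}(\mathcal{M}_f)\cap Y\subseteq\mathrm{Bs}(\mathcal{M}_{f_{K_k}g^{(k)}})$. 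If $g^{(k)}|_Y$ is $\mathbb{W}$-invariant, I stop and output $r=k$, $K=K_k$, $g_0=g^{(k)}$; the inclusion \eqref{equation:base-loci} is exactly what I have, and $c(K)\geqslant n+1-r$ holds. If $g^{(k)}|_Y$ is not $\mathbb{W}$-invariant, I apply Lemma~\ref{lemma:claim} with $\ell=m-k$, $g=g^{(k)}$, and the graph $K_k$, obtaining $K_{k+1}\supseteq K_k$ with $c(K_{k+1})\geqslant c(K_k)-1\geqslant n-k$ and $g^{(k+1)}\in H^0(\mathbb{P}^n,\mathcal{O}_{\mathbb{P}^n}(m-k-1))$ with $g^{(k+1)}|_Y\neq 0$, such that $\mathrm{Bs}(\mathcal{M}_{f_{K_k}g^{(k)}})\subseteq\mathrm{Bs}(\mathcal{M}_{f_{K_{k+1}}g^{(k+1)}})$, and I chain this with the previous inclusion. (One small bookkeeping point: Lemma~\ref{lemma:claim} produces a graph $K^\prime\supseteq K$ whose number of added edges may exceed one; to keep the degree count honest I would either restate the step so that $f_{K^\prime}$ has degree at most one more than $f_K$ after cancelling repeated factors, or — cleaner — just track the degree of $g^{(k)}$ directly, since Lemma~\ref{lemma:claim} already guarantees $\deg g^{(k+1)}=\deg g^{(k)}-1$, and set $r$ equal to the number of iterations performed, with $K$ the final graph; the bound $c(K)\geqslant n+1-r$ then follows because $c$ drops by at most $1$ per iteration.)

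The iteration must terminate after at most $m$ steps because $\deg g^{(k)}=m-k$ drops by exactly one each time, and once the degree reaches $0$ the polynomial $g^{(m)}$ is a nonzero constant on $Y$, hence trivially $\mathbb{W}$-invariant on $Y$; so the process stops at some $r\in\{0,1,\ldots,m\}$. This gives all the asserted conclusions: $r\in\{0,\ldots,m\}$, the graph $K=(V,E)$ with $|E|\leqslant$ (sum of the per-step edge increments), the $\mathbb{W}$-invariant $g_0=g^{(r)}\in H^0(\mathbb{P}^n,\mathcal{O}_{\mathbb{P}^n}(m-r))$ with $g_0|_Y\neq 0$, the bound $c(K)\geqslant n+1-r$, and the chain of base-locus inclusions yielding \eqref{equation:base-loci} with $g=f_Kg_0$.

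The only real subtlety — and the step I expect to need the most care — is the interplay between the degree of $f_K$ (which is $|E|$) and the parameter $r$: Lemma~\ref{lemma:claim} guarantees that $g$ loses exactly one degree per step but allows $f_K$ to gain more than one edge (it adds the edges of $\tau(K)$ and the transposition edge), so naively $\deg(f_{K_{k+1}}g^{(k+1)})$ need not equal $\deg(f_{K_k}g^{(k)})$. However, this is harmless: the statement of the corollary does not require $f_Kg_0$ to have degree $m$, only $r\in\{0,\ldots,m\}$ with $g_0$ of degree $m-r$; the base-locus inclusion \eqref{equation:base-loci} is all that is claimed about $\mathcal{M}_g$, and the linear system $\mathcal{M}_g$ lives in $|\mathcal{O}_{\mathbb{P}^n}(|E|+m-r)|$ for whatever $|E|$ results. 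So after fixing $r$ to be the number of iterations and carrying the degree count on the $g^{(k)}$ factor, everything closes. I would present the argument as a clean finite induction on the iteration count, invoking Lemma~\ref{lemma:claim} as the inductive step and the degree-zero case as the base of termination.
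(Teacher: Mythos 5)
Your proposal is essentially the paper's own proof: iterate Lemma~\ref{lemma:claim} starting from the empty graph $(V,\varnothing)$ and $g=f$, terminate after at most $m$ steps because the degree of the $g$-factor drops by one each time, and track that $c(K)$ drops by at most one per step. The one step you omit is the very last one: your iteration stops when $g^{(r)}\vert_Y$ is $\mathbb{W}$-invariant, but the corollary asks for a polynomial $g_0$ that is $\mathbb{W}$-invariant on the nose; you must therefore replace $g^{(r)}$ by a genuinely $\mathbb{W}$-invariant polynomial of the same degree with the same restriction to $Y$ (e.g.\ by averaging over $\mathbb{W}$), which preserves $g_0\vert_Y\neq 0$ and, since the inclusion \eqref{equation:base-loci} only concerns points of $Y$ and the translates $\sigma^*(f_Kg_0)$ and $\sigma^*(f_Kg^{(r)})$ agree on $Y$, also preserves \eqref{equation:base-loci}. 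Your bookkeeping remark about $\deg f_K$ versus $r$ is correct and harmless, exactly as you say.
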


\begin{proof}
Let us apply Lemma~\ref{lemma:claim} repeatedly  starting with the~graph $(V,\varnothing)$ and $g=f$.
This process must stop after at most $m$ steps.
Therefore, we obtain a~graph $K(V,E)$ and a~polynomial $g=f_Kh$ such that $\mathrm{deg}(h)=m-r$ for $r\leqslant m$,
$c(K)\geqslant n+1-r$, the~restriction $h\vert_{Y}$ is $\mathbb{W}$-invariant, and
$$
\mathrm{Bs}\big(\mathcal{M}_f\big)\subseteq\mathrm{Bs}\big(\mathcal{M}_{g}\big).
$$
Then we can replace $h$ by a~$W$-invariant polynomial $g_0$ of the~same degree such that $g_0|_Y=h|_Y$.
\end{proof}

\begin{proof}[Proof of Proposition \ref{proposition:technical}]
The assertions on $\mathrm{dim}(Z)$ and the~assertion on the~number of irreducible components of the~subvariety $\mathscr{Z}$
follow from Lemma \ref{lemma:partition-number}. Thus, we have to prove that
\begin{enumerate}
\item[(1)] either a general point in $Z$ has at most $m$ different coordinates,
\item[(2)] or the~subvariety $Z$ is an irreducible component of a~set-theoretic intersection of $W$-invariant hypersurfaces of degree at most $m$.
\end{enumerate}

Let $D_1,\dots,D_k$ be $W$-invariant hypersurfaces of degree at most $m$ that contain $Z$,
and let $Y$ be the~set theoretic intersection $D_1\cap \dots\cap D_k$ (if there exist no such hypersurfaces, we set $Y=X$).
We~may assume $Z\subsetneqq Y$ (otherwise (2) clearly holds).
Hence, there is $f\in \cM$ such that $f|_Y\neq 0$. Note that this gives $Z\subseteq \Bs(\cM_f)\cap Y$.

By Corollary~\ref{corollary:claim}, we find a~graph $K=(V,E)$ with  $c(K)\geqslant n+1-r$ and a~$\mathbb{W}$-invariant polynomial $g_0\in H^0(\mathbb{P}^n,\mathcal{O}_{\mathbb{P}^n}(m-r))$
such that $g_0|_Y\neq 0$ and \eqref{equation:base-loci} holds, where $r\in\{0,1,\ldots,m\}$. By the~construction of $Y$, we see that $g_0|_Z\neq 0$, thus \eqref{equation:base-loci} gives
$$
Z\subseteq \mathrm{Bs}\big(\mathcal{M}_{f_K}\big).
$$
Pick a~general point $z\in Z$ with coordinates $[z_0:\ldots:z_n]$ and consider the~color set $\cC=\{z_0,\dots,z_n\}$.
If (1) does not hold, then there are at least $m\geqslant r$ different colors in $\cC$.
By Lemma~\ref{lemma:coloring}, we may color the~graph $K$ by $\cC$.
After unwinding the~definitions, this implies that there is $\sigma\in\mathbb{W}$ such that $\sigma^*(f_K)$ does not vanish on $Z$.
But this is a~contradiction as $\sigma^*(f_K)\in \cM_{f_K}$. So, we conclude that (1) holds in this case and this completes the~proof of the~proposition.
\end{proof}

Let us apply Proposition \ref{proposition:technical}.
Recall that $X_d$ is the~Fermat hypersurface in $\mathbb{P}^{n}$ of degree $d$.

\begin{proposition}
\label{proposition:Fermat-hypersurfaces}
If $d\leqslant n \leqslant 3d-1$, then $\alpha_G(X)\geqslant 1$.
If $n\geqslant 3d$, then $\alpha_G(X)=\frac{2d}{n+1-d}$.
\end{proposition}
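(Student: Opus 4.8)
The plan is to estimate $\alpha_G(X_d)$ by bounding the log canonical thresholds of effective $G$-invariant $\mathbb{Q}$-divisors $D\sim_{\mathbb{Q}}-K_{X_d}$, using Proposition~\ref{proposition:technical} to control where such divisors can fail to be log canonical. First I would record $-K_{X_d}=\mathcal{O}_{X_d}(n+1-d)$, so that a $G$-invariant $D\sim_{\mathbb{Q}}-K_{X_d}$ is cut out (after clearing denominators) by a $W$-invariant linear system on $\mathbb{P}^n$ of degree a multiple of $n+1-d$; the relevant $m$ in Proposition~\ref{proposition:technical} will be $n+1-d$. For the upper bound when $n\geqslant 3d$, I would exhibit the explicit $G$-invariant divisor: take the hyperplane $X_1=\{x_0+\cdots+x_n=0\}$ (case (2) of Example~\ref{example:technical}), or rather the combination of the $d$ "Fermat-type" invariant forms, and compute the log canonical threshold of $\frac{2d}{n+1-d}$ times the corresponding divisor at the $W$-fixed point $(1:\cdots:1)$ or along a suitable invariant stratum; the multiplicity count there should force $\alpha_G\leqslant\frac{2d}{n+1-d}$.

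For the lower bounds, suppose $(X_d,\lambda D)$ is not log canonical for some $G$-invariant $D\sim_{\mathbb{Q}}-K_{X_d}$ and some $\lambda<\alpha_G$; I would apply the tie-breaking Lemma~\ref{lemma:tie-breaking} to replace $D$ by a $G$-invariant divisor whose non-klt locus is a disjoint union of minimal log canonical centers transitively permuted by $G$, hence a $G$-irreducible subvariety $\mathscr{Z}$ with components $Z$. Then I would run Lemma~\ref{lemma:nNklt-base-locus} to realize $\mathscr{Z}$ inside the base locus of a $W$-invariant linear system on $\mathbb{P}^n$ of controlled degree, and feed this into Proposition~\ref{proposition:technical}. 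In case (2) of that proposition, $Z$ has codimension $\leqslant m=n+1-d$ in $\mathbb{P}^n$ and lies on $W$-invariant hypersurfaces of low degree, which by the "moreover" clause (using $n\geqslant 4$) forces $\mathscr{Z}$ to have $\geqslant n+1$ components or to be the point $(1:\cdots:1)$; in case (1), $\dim Z\leqslant m-1$ and a general point of $Z$ has few distinct coordinates. In either case I would bound the multiplicity of $D$ (equivalently of the general member of the defining linear system) along $Z$ using the hypertangent estimate Lemma~\ref{lemma:hypertangent}, restricting $-K_{X_d}$-divisors through a general point of $Z$: the numerics $\frac{\mult_Z Y}{\deg Y}\leqslant(\deg Z\cdot\prod\mult_Z D_i)^{-1}$ will be incompatible with the non-log-canonical inequality $\mult_Z(D)>1$ (or its higher-codimension analogue via Corti's inequalities) precisely in the range $d\leqslant n\leqslant 3d-1$, giving $\alpha_G\geqslant 1$, while in the range $n\geqslant 3d$ the same machinery should yield exactly $\alpha_G\geqslant\frac{2d}{n+1-d}$, matching the upper bound.

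The main obstacle I anticipate is the multiplicity bookkeeping when $Z$ is a \emph{positive-dimensional} center lying on the Fermat hypersurface $X_d$ itself: one must pass from estimates on $\mathbb{P}^n$ to estimates on $X_d$, track the contribution of the Fermat equation as one of the hypertangent divisors $D_i$ (noting $\mult_Z X_d$ can be large when $Z$ sits in a coordinate-degenerate stratum), and then separately handle the boundary range $n=3d-1$ versus $n=3d$ where the bound $\frac{2d}{n+1-d}$ crosses $1$. I expect the case $Z=(1:\cdots:1)$ to require a direct local computation of the log canonical threshold of a general $W$-invariant form of degree $n+1-d$ at that point (its multiplicity there being governed by the lowest-degree $W$-invariant in the local expansion), and it is this local computation, combined with the global degree constraint $(-K_{X_d})^{n-1}\cdot Z$, that should pin down the exact value $\frac{2d}{n+1-d}$ rather than merely an inequality.
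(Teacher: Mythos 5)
Your skeleton for the lower bound (tie-breaking via Lemma~\ref{lemma:tie-breaking}, realizing the non-klt locus as $\mathrm{Bs}(\mathcal{L})$ via Lemma~\ref{lemma:nNklt-base-locus}, then invoking Proposition~\ref{proposition:technical}) matches the paper, but there is a genuine gap in how you feed the data into Proposition~\ref{proposition:technical}. That proposition is about $W$-invariant systems, where $W\cong\mathfrak{S}_{n+1}$ acts by permutations; a $G$-invariant system is not $W$-invariant in the relevant coordinates, and your choice $m=n+1-d$ makes the proposition vacuous (case (2) would only give $\mathrm{codim}(Z)\leqslant n+1-d$, far too weak). The paper's essential mechanism, which you omit, is to decompose the defining space $V=\bigoplus_\chi V_\chi$ into $\mathbb{T}$-isotypic pieces, write $V_\chi=\mathbf{x}_\chi\cdot W_\chi$ with $W_\chi$ built from $U=\mathrm{span}(x_0^d,\ldots,x_n^d)$, and push forward along $\upsilon(x_0:\cdots:x_n)=(x_0^d:\cdots:x_n^d)$. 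Since Lemma~\ref{lemma:nNklt-base-locus} produces $\mathcal{L}\subset|(3d-2)H|$, one gets $m\leqslant 2$ after applying $\upsilon$, and Proposition~\ref{proposition:technical} with $m\leqslant 2$ (case (1) being excluded because $Z$ is irreducible by connectedness and $(1:\cdots:1)\notin\upsilon(X_d)$) forces $Z=X_d\cap X_{2d}$. The concluding contradiction is then just a divisor-class count: $\mult_Z(D)>1$ is impossible because $Z\sim 2dH$ on $X_d$ while $D\sim rH$ with $r=\min\{2d,n+1-d\}\leqslant 2d$. No hypertangent estimate (Lemma~\ref{lemma:hypertangent}) and no Corti-type inequality are needed here, and without identifying $Z$ explicitly your proposed "multiplicity bookkeeping" has nothing to compute against.

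Two smaller but real errors: the hyperplane $X_1=\{x_0+\cdots+x_n=0\}$ is $W$-invariant but not $\mathbb{T}$-invariant, hence not $G$-invariant, so it cannot be used for the upper bound; the correct witness is $X_{2d}\vert_{X_d}\sim 2dH$, giving $\alpha_G(X)\leqslant\frac{2d}{n+1-d}$ immediately (no local computation at $(1:\cdots:1)$ is involved, and that point does not even lie on $X_d$). Also, the clean way to get both ranges at once is the paper's device of proving log canonicity of $(X,D)$ for all $G$-invariant $D\sim_{\mathbb{Q}}rH$ with $r=\min\{2d,n+1-d\}$, which specializes to $\alpha_G\geqslant 1$ when $n\leqslant 3d-1$ and to $\alpha_G\geqslant\frac{2d}{n+1-d}$ when $n\geqslant 3d$; your proposal treats the two ranges as separate numerical coincidences to be checked.
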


\begin{proof}
We suppose that $n\geqslant d$. Let $H$ be a~hyperplane section of $X_d$, let $r=\min\{2d,n+1-d\}$,
and let $D$ be a~$G$-invariant effective divisor on $X_d$ such that $D\sim_{\mathbb{Q}} rH$.
We have $\alpha_G(X)\leqslant \frac{2d}{n+1-d}$, where the~right hand side is computed by the~$G$-invariant Fermat hypersurface $X_{2d}$.
Hence, both statements of the~proposition would follow once we prove that the~log pair $(X,D)$ is log canonical.
Suppose that $(X,D)$ is not log canonical. Let us seek for a~contradiction.

Let $\lambda=\mathrm{lct}(X,D)$ and $Z=\mathrm{Nklt}(X,\lambda D)$.
Then $(X,\lambda D)$ is log canonical, $\lambda<1$ and $Z\ne\varnothing$.
Applying Lemma \ref{lemma:tie-breaking}, we may assume that $Z$ is a~disjoint union of irreducible normal subvarieties.
But, on the~other hand, since $-(K_X+\lambda D)$ is ample, applying Koll\'ar--Shokurov's connectedness,
we~conclude that $Z$ is an~irreducible subvariety.
By Lemma \ref{lemma:nNklt-base-locus}, there exists a~$G$-invariant linear subsystem
$\mathcal{L}\subset|(3d-2)H|$ such that $Z=\mathrm{Bs}(\mathcal{L})$.

Let $V$ be the~vector subspace in $H^0(X,\mathcal{O}_X((3d-2)H))$ that corresponds to the~linear system~$\mathcal{L}$.
Then $V$ is a~$\mathbb{G}$-subrepresentation in $H^0(X,\mathcal{O}_X((3d-2)H))$.
As $\mathbb{T}$-representation, we have
$$
V=\bigoplus_{\chi} V_{\chi},
$$
where the~summand runs over all characters $\chi$ of the~group $\mathbb{T}$.
For each $\chi$, we have $V_{\chi}=\mathbf{x}_{\chi}\cdot W_{\chi}$,
where $\mathbf{x}_{\chi}$ is a~monomial of degree at most $d-1$ in each homogeneous coordinate $x_0,x_1,\ldots,x_{n}$,
while $\bT$ acts trivially on $W_{\chi}$.
Each $W_{\chi}$ is the~image in $H^0(X,\mathcal{O}_X(mdH))$ of a subspace of
$$
\mathrm{Sym}^m(U)\subseteq H^0(\mathbb{P}^n,\mathcal{O}_{\mathbb{P}^n}(md)),
$$
where $U=\mathrm{span}(x_0^d,\ldots,x_n^d)\subseteq H^0(\mathbb{P}^n,\mathcal{O}_{\mathbb{P}^n}(d))$ and $m\leqslant 2$,
because $md=\mathrm{deg}(W_\chi)\leqslant 3d-2$.

Since the~action of the~group $\mathbb{G}$ on the~vector space $H^0(\mathbb{P}^n,\mathcal{O}_{\mathbb{P}^n}(1))$ is irreducible,
we see that the~subvariety $Z$ is not contained in a~hyperplane, so $Z$ is not contained in $\{\mathbf{x}_{\chi}=0\}$ for any~$\chi$.
Then $Z$ is a~set-theoretic intersection of zeroes of all polynomials in all $W_{\chi}$. Since $Z$ is invariant under the~$G$-action, we see that $\sigma^*(f)$ vanishes on $Z$ for any $f\in W_{\chi}$ and any $\sigma\in\mathbb{W}\cong\mathfrak{S}_{n+1}$.

Now, let us consider a~morphism $\upsilon\colon \mathbb{P}^n \to \mathbb{P}^n$ defined as
$$
\upsilon(x_0: \ldots : x_n)=\big(x_0^d:\ldots:x_n^d\big).
$$
Then the~induced action of $G$ on $\mathrm{Im}(f)=\mathbb{P}^n$ is isomorphic (as an~action) to the~permutational action of the~group $W\cong\mathfrak{S}_{n+1}$ on $\mathbb{P}^n$.
Further, we observe that
$$
(1:\ldots:1)\not\in \upsilon(X_d).
$$
Moreover, since~$Z$ is connected and $W$-invariant, so is $\upsilon(Z)$.
Thus, from the~previous discussion, we conclude that $Z$ is the~base locus of some $W$-invariant linear system of degree at most $2d$,
generated by all the~polynomials in $W_{\chi}$.
Then $\upsilon(Z)$ is the~base locus of some $W$-invariant linear system of degree at most $2$.
Applying Proposition~\ref{proposition:technical} to $\upsilon(Z)$,
we see that $Z$ is an~irreducible component of a~set-theoretic intersection of $G$-invariant hypersurfaces of degree at most $2d$.
Then
$$
Z=X_d\cap X_{2d}.
$$
On the~other hand, since the~log pair $(X_d,D)$ is not log canonical along $Z$,
we have $\mult_{Z}(D)>1$. This contradicts to $Z\sim_{\mathbb{Q}} 2dH$ and $r\leqslant 2d$.
\end{proof}

Similarly, we prove the~following result.

\begin{proposition}
\label{proposition:Fermat-cpi}
Let $X=X_d\cap X_{2d}\cap\ldots\cap X_{rd}$ for $r\geqslant 1$,
let $H$ be a~hyperplane section of~$X$, and let $D$ be a~$G$-invariant effective $\mathbb{Q}$-divisor on $X$ such that $D\sim_{\mathbb{Q}} qH$
for a~positive rational number $q<(r+1)d$.
Suppose, in addition, that $\mathrm{dim}(X)\geqslant 1$, $n\geqslant 4$, and $dH-(K_X+D)$~is~nef.
Then the~log pair $(X,D)$ is log canonical.
\end{proposition}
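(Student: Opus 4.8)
The plan is to run the argument of Proposition~\ref{proposition:Fermat-hypersurfaces} for the complete intersection $X=X_d\cap X_{2d}\cap\cdots\cap X_{rd}$. Assume $(X,D)$ is not log canonical; then $D\neq 0$, so $D\sim_{\mathbb{Q}}qH$ with $q>0$, and we will derive a contradiction. Put $\lambda=\lct(X,D)<1$ and $Z=\Nklt(X,\lambda D)\neq\varnothing$. Since $D$ is ample, applying the equivariant tie-breaking of Lemma~\ref{lemma:tie-breaking} and then the Koll\'ar--Shokurov connectedness principle exactly as in the proof of Proposition~\ref{proposition:Fermat-hypersurfaces}, we may assume that $Z$ is a single $G$-invariant normal irreducible subvariety of $X$.

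The main quantitative point is to bound the degree of the linear system that cuts out $Z$. Writing $K_X\sim_{\mathbb{Q}}k_XH$, the complete intersection $X$ of Fermat hypersurfaces of degrees $d,2d,\dots,rd$ in $\mathbb{P}^n$ has $k_X=\tfrac{r(r+1)}{2}d-n-1$ and $\dim X=n-r$, so $K_X+\dim(X)H\sim_{\mathbb{Q}}\bigl(\tfrac{r(r+1)}{2}d-r-1\bigr)H$ --- a multiple of $H$ that is \emph{independent of $n$}. Since $dH-(K_X+D)$ is nef we have $q\leqslant d-k_X$, and as $\lambda<1$, $q>0$ this gives $\tfrac{r(r+1)}{2}d-r-1+\lambda q<\dim(X)+d$. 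Hence Lemma~\ref{lemma:nNklt-base-locus}, applied with $L=(\dim(X)+d)H$, produces a $G$-invariant linear subsystem $\mathcal{L}$ with $\Bs(\mathcal{L})=Z$ and $\deg\mathcal{L}=\dim(X)+d$. Following the toric decomposition in the proof of Proposition~\ref{proposition:Fermat-hypersurfaces}, I would then write the corresponding representation $V$ as $\bigoplus_\chi\mathbf{x}_\chi W_\chi$ over $\mathbb{T}$, with $W_\chi$ the image of a subspace of $\mathrm{Sym}^m(U)$ for $U=\mathrm{span}(x_0^d,\dots,x_n^d)$ and $md=\deg W_\chi\leqslant\deg\mathcal{L}$, and pass through the finite map $\upsilon\colon\mathbb{P}^n\to\mathbb{P}^n$, $\upsilon(x_0:\dots:x_n)=(x_0^d:\dots:x_n^d)$, which intertwines the $G$-action with the permutation $W$-action. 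Then $\upsilon(Z)$ becomes an irreducible $W$-invariant component of the base locus of a $W$-invariant linear subsystem of $|\mathcal{O}_{\mathbb{P}^n}(m)|$, with $\upsilon(Z)\neq(1:\dots:1)$ since $(1:\dots:1)\notin\upsilon(X)$.

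Next I would apply Proposition~\ref{proposition:technical} to $\upsilon(Z)$: since $\upsilon(Z)$ is irreducible, $n\geqslant 4$ and $\upsilon(Z)\neq(1:\dots:1)$, alternative $(1)$ is excluded, so $\upsilon(Z)$ --- hence $Z$ --- is an irreducible component of a set-theoretic intersection of $G$-invariant hypersurfaces of degree $\leqslant md$, with $\dim Z\geqslant n-m$. Every $\mathbb{G}$-semiinvariant form of degree $\leqslant rd$ lies in the ideal $(F_d,F_{2d},\dots,F_{rd})$ of $X$ (by Newton's identities for the power sums $F_{kd}=\sum_i x_i^{kd}$), so either all of these hypersurfaces contain $X$, forcing $Z=X$ --- absurd, as $\Nklt(X,\lambda D)\subsetneq X$ --- or some involve the ``new'' power sums $F_{(r+1)d},\dots,F_{md}$, in which case $Z$ is an irreducible component of $X$ cut out by a few of the extra Fermat hypersurfaces $X_{(r+1)d},\dots,X_{md}$: a complete intersection inside $X$ of codimension at most $m-r$.

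When $m\leqslant r+1$ (in particular for $r=1$, which recovers Proposition~\ref{proposition:Fermat-hypersurfaces}), $Z$ is then a prime divisor on $X$ which, for $\dim X\geqslant 2$, equals $X\cap X_{(r+1)d}\sim_{\mathbb{Q}}(r+1)dH$; since $Z$ is a divisorial non-klt centre of $(X,\lambda D)$, $\mult_Z(D)=\lambda^{-1}\mult_Z(\lambda D)>1$, so $D-Z$ is a non-zero effective $\mathbb{Q}$-divisor of class $(q-(r+1)d)H$ --- impossible because $q<(r+1)d$. The real obstacle is everything else: higher-codimensional $Z$ (which can occur once $r\geqslant 2$) and the degenerate situations $\dim X=1$ or $X\cap X_{(r+1)d}$ reducible. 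There the naive divisorial comparison must be replaced by restricting $D$ to suitable complete-intersection subvarieties of $X$ and invoking Pukhlikov--Corti intersection inequalities, exactly as in the proof of Theorem~\ref{theorem:Arman}, with the bound $q<(r+1)d$ again serving to close the numerical estimate. This produces the contradiction and proves the proposition.
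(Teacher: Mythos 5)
Your skeleton (tie-breaking, the multiplier-ideal base-locus Lemma~\ref{lemma:nNklt-base-locus}, the $\mathbb{T}$-isotypic decomposition, the power map $\upsilon$, and Proposition~\ref{proposition:technical}) matches the paper's, but there are two genuine gaps. The first is at the very start: you invoke Koll\'ar--Shokurov connectedness ``exactly as in the proof of Proposition~\ref{proposition:Fermat-hypersurfaces}'' to make $Z=\mathrm{Nklt}(X,\lambda D)$ irreducible. Connectedness needs $-(K_X+\lambda D)$ to be nef and big; in Proposition~\ref{proposition:Fermat-hypersurfaces} this holds because $D\sim_{\mathbb{Q}}rH$ with $r\leqslant n+1-d$, but here the only hypothesis is that $dH-(K_X+D)$ is nef, and $K_X+\lambda D$ can perfectly well be ample (indeed, in the application inside Theorem~\ref{thm:superrigid cpi} one has $K_Y+D\sim_{\mathbb{Q}}dH$). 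So $Z$ need not be connected, and without a bound on its number of components you cannot exclude alternative $(1)$ of Proposition~\ref{proposition:technical}, whose ``Moreover'' clause only bites when $\mathscr{Z}$ has at most $n$ components. The paper's substitute is the one step your proof is missing and the one place the nefness hypothesis is actually used: Nadel vanishing for $\mathcal{J}(X,\lambda D)\otimes\mathcal{O}_X(dH)$ gives a surjection $H^0(X,\mathcal{O}_X(dH))\to H^0(\Upsilon,\mathcal{O}_\Upsilon(dH|_\Upsilon))$, and taking $\mathbb{T}$-invariant parts (each component contributes at least the restriction of $U$) bounds the number of $T$-irreducible components of $Z$ by $\dim(U)-1=n$. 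This also yields, as a byproduct, that no component of $Z$ lies in a coordinate hyperplane, which you would otherwise have to argue separately.

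The second gap is the endgame. Once $Z$ is known to be a component of $Y=X_d\cap X_{2d}\cap\ldots\cap X_{md}$, you only close the case where $Y$ is a divisor, and defer ``everything else'' to Pukhlikov--Corti intersection inequalities. Those inequalities concern self-intersections of \emph{mobile} linear systems and do not apply to the effective $\mathbb{Q}$-divisor $D$ of this proposition. The correct tool is Lemma~\ref{lemma:hypertangent} applied to $D_k=\frac{1}{kd}(X_{kd}\cdot X)$ for $k=r+1,\ldots,m$: since $n>m$ makes $Y$ irreducible, it gives
$$
\mult_Y(D)\leqslant\frac{\deg(D)}{\deg(Y)}\prod_{k=r+1}^{m}kd\leqslant 1,
$$
contradicting $\mult_Y(D)>1$ in every codimension at once, with the bound $q<(r+1)d$ entering exactly here. (Two smaller remarks: your claim that every $\mathbb{G}$-semiinvariant of degree $\leqslant rd$ lies in the ideal $(F_d,\ldots,F_{rd})$ is not needed and is not correct as stated, since the summands $V_\chi=\mathbf{x}_\chi W_\chi$ carry monomial factors; and your choice $L=(\dim(X)+d)H$ in Lemma~\ref{lemma:nNklt-base-locus} is admissible but gives a weaker bound on $m$ than the paper's $a=\deg(K_X+D+\dim(X)H)$.)
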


\begin{proof}
Replacing $q$ by $\lceil q \rceil$, and $D$ by $\frac{\lceil q \rceil}{q}D$,
we may assume that the~number $q$ is actually integer.
Suppose that $(X,D)$ is not log canonical. Let us seek for a~contradiction.

Let $\lambda=\mathrm{lct}(X,D)$ and $Z=\mathrm{Nklt}(X,\lambda D)$.
Then $(X,\lambda D)$ is log canonical, $\lambda<1$ and $Z\ne\varnothing$.
Applying Lemma \ref{lemma:tie-breaking}, we may assume that $Z$ is a~disjoint union of irreducible normal subvarieties,
and $Z$ is $G$-irreducible.
Moreover, using Lemma \ref{lemma:nNklt-base-locus}, we see that $Z=\mathrm{Bs}(\mathcal{L})$
for some $G$-invariant linear subsystem
$\mathcal{L}\subset|aH|$, where
$$
a=\frac{r(r+1)}{2}d+q-(r-1)
$$
that satisfies $K_X+D+(n-r)H\sim_{\mathbb{Q}} aH$.

Now, let $V$ be the~vector subspace in $H^0(X,\mathcal{O}_X(aH))$ that corresponds to the~linear system~$\mathcal{L}$.
Then $V$ is a~$\mathbb{G}$-subrepresentation in $H^0(X,\mathcal{O}_X(aH))$.
As before, we have
$$
V=\bigoplus_{\chi} V_{\chi},
$$
where the~summand runs over all characters $\chi$ of the~group $\mathbb{T}$.
For each $\chi$, we have $V_{\chi}=\mathbf{x}_{\chi}\cdot W_{\chi}$,
where $\mathbf{x}_{\chi}$ is a~monomial of degree at most $d-1$ in each homogeneous coordinate $x_0,x_1,\ldots,x_{n}$,
and each $W_{\chi}$ is the~image in $H^0(X,\mathcal{O}_X(\ell dH))$ of a subspace of
$$
\mathrm{Sym}^{\ell}(U)\subseteq H^0(\mathbb{P}^n,\mathcal{O}_{\mathbb{P}^n}(\ell d)),
$$
where $U=\mathrm{span}(x_0^d,\ldots,x_n^d)\subseteq H^0(\mathbb{P}^n,\mathcal{O}_{\mathbb{P}^n}(d))$ and $\ell\leqslant \lfloor \frac{a}{d}\rfloor$.

Now, let $Z_1,\ldots,Z_s$ be the~$T$-irreducible components of the~locus $Z$.
Then we claim that $s\leqslant n$. Indeed, using Nadel vanishing theorem and the~nefness of the~divisor $dH-(K_X+D)$,
we get
$$
H^1\Big(X,\mathcal{J}\big(X,\lambda D\big)\otimes \mathcal{O}_X\big(dH\big)\Big)=0,
$$
where $\mathcal{J}(X,\lambda D)$ is the~multiplier ideal sheaf of the~log pair $(X,\lambda D)$.
Now, let $\Upsilon$ be the~subscheme defined by the~multiplier ideal sheaf $\mathcal{J}(X,\lambda D)$ of the log  pair $(X,\lambda D)$,
and let $\Upsilon_i$ be its irreducible component supported on $Z_i$ for $i\in\{1,\ldots,s\}$.
Then the~natural restriction
$$
H^0\Big(X,\mathcal{O}_X\big(dH\big)\Big)\rightarrow H^0\Big(\Upsilon,\mathcal{O}_{\Upsilon}\big(dH\vert_{\Upsilon}\big)\Big)
$$
is surjective. Taking the~$\mathbb{T}$-invariant parts, we see that
\begin{multline*}
s\leqslant\sum_{i=1}^{s}\mathrm{dim}\Bigg(H^0\Big(\Upsilon_i,\mathcal{O}_{\Upsilon_i}\big(dH\vert_{\Upsilon_i}\big)\Big)^{\mathbb{T}}\Bigg)
\leqslant \mathrm{dim}\Bigg(H^0\Big(\Upsilon,\mathcal{O}_{\Upsilon}\big(dH\vert_{\Upsilon}\big)\Big)^{\mathbb{T}}\Bigg)\leqslant \\ \leqslant\mathrm{dim}\Bigg(H^0\Big(X,\mathcal{O}_X\big(dH\big)\Big)^{\mathbb{T}}\Bigg)=\mathrm{dim}(U)-1 =n.\quad\quad\quad\quad\quad
\end{multline*}
Here, the~first inequality holds because $H^0(\Upsilon_i,\mathcal{O}_{\Upsilon_i}(dH\vert_{\Upsilon_i}))^{\mathbb{T}}$ contains $U\vert_{\Upsilon_i}\neq 0$.

We claim that no $T$-irreducible components of  $Z$ are contained in coordinate hyperplanes.
\mbox{Indeed}, otherwise, such component would be contained in the~(unique) minimal $T$-invariant linear subspace in $\mathbb{P}^n$,
which would imply that $Z$ has at least $n+1$ $T$-irreducible components.

Let $m=\lfloor \frac{a}{d}\rfloor$.~Arguing as in the proof of Proposition~\ref{proposition:Fermat-hypersurfaces},
we see that $Z$ is the~base locus of the~$W$-invariant subsystem of $|\cO_{\bP^n}(md)|$ generated by $\mathrm{Bs}(W_\chi)$
and hypersurfaces containing~$X$.
Now, using Proposition \ref{proposition:technical} and the~same morphism $\upsilon\colon \mathbb{P}^{n}\to \mathbb{P}^n$ as in Proposition \ref{proposition:Fermat-hypersurfaces},
we~conclude (as in the~proof of Proposition \ref{proposition:Fermat-hypersurfaces})
that $Z$ is a~$G$-irreducible component of the~set-theoretic intersection of some $G$-invariant hypersurfaces of degree at most $md$,
because the other possibility in Proposition \ref{proposition:technical} is excluded, since $\upsilon(Z)$ has at most $n$ irreducible components and
$$
\big\{x_0^d=x_1^d=\ldots=x_n^d\big\}\not\subset X.
$$
In particular, we see that the~pair $(X,D)$ is not log canonical along $Y=X_d\cap X_{2d}\cap \ldots \cap X_{md}$,
and hence $\mult_Y (D)>1$. But as $n>m$ under our assumption (we leave to the~reader to verify this),
we see that $Y$ is irreducible. Now, applying Lemma \ref{lemma:hypertangent} to $D_k=\frac{1}{kd}(X_{kd}\cdot X)$ for $k=r+1,\ldots,m$,
we get
$$
\mult_Y (D) \leqslant \frac{\mathrm{deg} (D)}{\mathrm{deg} (Y)}\prod_{i=k+2}^{m} kd \leqslant \frac{\mathrm{deg} (H)}{\mathrm{deg} (Y)}\prod_{k=r+1}^{m} kd = 1,
$$
which is a contradiction.
\end{proof}

\section{The proof of Theorem~\ref{theorem:main}}
\label{section:the-proof}

Let us use all assumptions and notations of Section~\ref{section:preliminary}.
Let $X$ be the~complete intersection in the~projective space $\mathbb{P}^n$ of the~Fermat hypersurfaces $X_{2d},X_{3d},\ldots,X_{rd}$ for some integer $r\geqslant 2$,
and let $H$ be a~hyperplane section of the~variety $X$.
Suppose that
$$
-K_X\sim qH
$$
for some $q\leqslant \frac{(r+1)d}{2}$. Then  $\alpha_G(X)\leqslant \frac{d}{q}$, since $-K_X\sim_\bQ \frac{q}{d}X_d|_X$. So, we can make $\alpha_G(X)$ arbitrarily small by choosing $q=\lfloor \frac{1}{2}(r+1)d\rfloor$ and letting $r\gg 0$.
Therefore, to prove Theorem \ref{theorem:main}, it remains to show that $X$ is $G$-birationally super-rigid.

In order to prove this, we use a~similar strategy as in Proposition~\ref{proposition:Fermat-cpi}. Let $\M$ be a~$G$-invariant mobile linear system, and let $\lambda$ be a~positive rational number such that
$$
K_X+\lambda \M \sim_{\Q} 0.
$$
As in the~proof of the~Theorem \ref{theorem:alpha} we need to show that $(X,\lambda \M)$ has canonical singularities.
Suppose the~singularities of the~pair $(X,\lambda \M)$ are non-canonical.
Let us seek for a~contradiction.

Let $B$ be a~center of non-canonical singularities of the~log pair $(X,\lambda \M)$.
Let us create some non-log canonical behaviour using the~center $B$.
In the~following Lemma~\ref{lemma:nlc-hyperplane-section},
we~first treat the~case when $B$ is contained in some special divisor $Y\subset X$, 
so that $(Y,\lambda \M|_Y)$ is not log canonical by the~inversion of adjunction.
As in the~proof of Proposition~\ref{proposition:Fermat-cpi}, we will use Nadel vanishing to~get an estimate of the~possible number of irreducible components of the~non-log canonical locus, and then use Proposition~\ref{proposition:technical} to derive a~contradiction.

\begin{lemma}
\label{lemma:nlc-hyperplane-section}
Let $r,d\geqslant 2$, $n\geqslant 4$ be integers,
let $H$ be a divisor in $|\mathcal{O}_{\mathbb{P}^n}(1)\vert_{X}|$, and set $Y=X\cap X_d$. Assume that $D\sim_{\mathbb{Q}} lH$ is a~ $G$-invariant effective divisor on $X$ for some $l\leqslant (r+1)d$ such that the~divisor $H-(K_X+D)$ is nef. Assume also that $\dim X\geqslant 2$. Then
    \begin{enumerate}
        \item if $D$ does not contain $Y$ in its support, then $(X,D)$ is log canonical;
        \item the~non-log canonical locus of $(X,D)$ is contained in $Y$.
    \end{enumerate}
\end{lemma}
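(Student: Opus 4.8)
The plan is to deduce part~(2) from part~(1), and then to prove part~(1) along the lines of the proof of Proposition~\ref{proposition:Fermat-cpi}, with the extra Fermat hypersurface $X_d$ now in the role of a hypertangent divisor. For the reduction, write $D=c\,Y+D_0$, where $c=\mult_Y(D)\geqslant 0$ and $D_0$ is an effective $\mathbb{Q}$-divisor with $Y\not\subseteq\Supp(D_0)$. Since $Y\sim dH$ on $X$, one has $D_0\sim_{\mathbb{Q}}(l-cd)H$ with $0\leqslant l-cd\leqslant l\leqslant(r+1)d$, and $H-(K_X+D_0)=\bigl(H-(K_X+D)\bigr)+cdH$ is nef; so part~(1) applies to $D_0$ and shows that $(X,D_0)$ is log canonical. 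As $D$ and $D_0$ coincide away from $Y$, the non-log canonical locus of $(X,D)$ is contained in $Y$, which is part~(2).

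To prove part~(1), assume $(X,D)$ is not log canonical and $Y\not\subseteq\Supp(D)$, and look for a contradiction; we may assume $l>0$, so that $D$ is ample. First I would apply the equivariant tie-breaking of Lemma~\ref{lemma:tie-breaking} to the non-log canonical pair $(X,D)$, replacing $D$ by a $G$-invariant $D'\sim_{\mathbb{Q}}(1-\epsilon)D$ such that $(X,D')$ is log canonical and $Z\vcentcolon=\Nklt(X,D')$ is a $G$-irreducible disjoint union of normal minimal log canonical centres, contained in $\Nklt(X,\lct(X,D)\,D)$. Then $(X,D)$ is still not log canonical along $Z$, so $\mult_Z(D)>1$ since $X$ is smooth. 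Because $H-(K_X+D')$ is nef, the divisor $(n-r+3)H-\bigl(K_X+D'+\dim(X)H\bigr)=H+\bigl(H-(K_X+D')\bigr)$ is ample, and Lemma~\ref{lemma:nNklt-base-locus} produces a non-empty $G$-invariant $\mathcal{L}\subseteq|(n-r+3)H|$ with $Z=\Bs(\mathcal{L})$. Put $m=\lfloor(n-r+3)/d\rfloor$; since $d,r\geqslant 2$ and $n\geqslant 4$, we have $n>m$.

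Next I would run the argument of Propositions~\ref{proposition:Fermat-hypersurfaces} and~\ref{proposition:Fermat-cpi} almost verbatim. Decompose the subspace $V\subseteq H^0(X,\mathcal{O}_X((n-r+3)H))$ underlying $\mathcal{L}$ as a $\mathbb{T}$-representation $V=\bigoplus_\chi\mathbf{x}_\chi W_\chi$, with each $W_\chi$ coming from a subspace of $\mathrm{Sym}^{\leqslant m}(U)$, $U=\mathrm{span}(x_0^d,\dots,x_n^d)$; use Nadel vanishing with the ample divisor $dH-(K_X+D')$ to bound the number of $T$-irreducible components of $Z$ and to verify that no such component lies in a coordinate hyperplane; and, via the morphism $\upsilon\colon\mathbb{P}^n\to\mathbb{P}^n$, $\upsilon(x_0:\dots:x_n)=(x_0^d:\dots:x_n^d)$, apply Proposition~\ref{proposition:technical} to $\upsilon(Z)$. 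The point $(1:\dots:1)$ is not on $\upsilon(X)$, since on $\{x_0^d=\dots=x_n^d\}$ no equation of a Fermat hypersurface $X_{kd}$ vanishes; hence alternative~(2) of Proposition~\ref{proposition:technical} holds and $Z$ is a $G$-irreducible component of $X$ cut out by Fermat hypersurfaces $X_{j_1d},\dots,X_{j_td}$ with all $j_i\leqslant m$. The hypersurfaces $X_{2d},\dots,X_{rd}$ contain $X$ and may be discarded, so we may take $j_i\in\{1\}\cup\{r+1,\dots,m\}$, with at least one $j_i$ present (otherwise $Z=X$, which is impossible). If every occurring $j_i$ equals $1$, then $Z$ is a $G$-irreducible component of $X\cap X_d=Y$, hence $Z=Y$ as $Y$ is irreducible, and then $(X,D)$ not log canonical along the prime divisor $Y$ gives $\mult_Y(D)>1$, so $Y\subseteq\Supp(D)$ --- contradicting our hypothesis. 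Otherwise some $j_i\geqslant r+1$, and applying Lemma~\ref{lemma:hypertangent} to the hypertangent divisors $\tfrac{1}{jd}(X_{jd}\cdot X)$ for $r<j\leqslant m$, together with B\'ezout's theorem and the bound $l\leqslant(r+1)d$, yields $\mult_Z(D)\leqslant 1$ --- exactly the estimate that ends the proof of Proposition~\ref{proposition:Fermat-cpi}. Either way we reach a contradiction.

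The delicate part is the final case analysis together with the bookkeeping it rests on: one must track carefully which pair is log canonical through the tie-breaking, so that the non-log canonicity of $(X,D)$ is genuinely inherited by the minimal centre $Z$; one must deal with the borderline case in which the number of $T$-irreducible components of $Z$ equals $n+1$ (where, unlike in Proposition~\ref{proposition:Fermat-cpi}, the Nadel estimate does not immediately rule out alternative~(1) of Proposition~\ref{proposition:technical}, and one has to use that those components lie on $X_d$ to conclude that $Z$ is a small explicit set disjoint from $X$); and one must check that the hypertangent and B\'ezout bookkeeping really does force $\mult_Z(D)\leqslant 1$ given only $l\leqslant(r+1)d$.
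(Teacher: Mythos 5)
Your overall architecture matches the paper's: the reduction of (2) to (1) by writing $D$ as a convex combination of a multiple of $Y$ and a divisor not containing $Y$, then for (1) the chain tie-breaking $\to$ Nadel bound on the number of components $\to$ Proposition~\ref{proposition:technical} via $\upsilon$ $\to$ hypertangent divisors. The first two thirds of this are essentially the paper's proof (the paper takes $\mathcal{L}\subset|aH|$ with $a=(\tfrac{r(r+1)}{2}-1)d+l-r$ rather than your $|(n-r+3)H|$, and excludes $s=n+1$ components by noting a $G$-orbit has length $\geqslant d(n+1)$; these are cosmetic differences).

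The genuine gap is in your final case analysis. Your own reduction leaves open the mixed case in which $Z$ is a component of $X\cap X_d\cap X_{j_1d}\cap\dots$ with some $j_i\geqslant r+1$ \emph{and} $X_d$ genuinely among the defining hypersurfaces. There you propose to apply Lemma~\ref{lemma:hypertangent} on $X$ with the divisors $\tfrac{1}{jd}(X_{jd}\cdot X)$ for $r<j\leqslant m$ only; but $Z$ need not be a component of the intersection of just those (it may have one higher codimension), so the lemma does not apply. If instead you add $\tfrac{1}{d}(X_d\cdot X)$ to the collection, the minimum in Lemma~\ref{lemma:hypertangent} is forced to omit the largest factor $\tfrac{1}{(r+1)d}$ rather than $\tfrac1d$, and the B\'ezout bookkeeping only yields $\mult_Z(D)\leqslant l/d\leqslant r+1$, which is no contradiction. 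The paper closes exactly this hole by \emph{restricting to} $Y=X\cap X_d$: since $Y\not\subseteq\Supp(D)$, the divisor $D|_Y$ is effective with $\mult_R(D|_Y)\geqslant\mult_R(D)>1$ for $R=X_d\cap\dots\cap X_{md}$, and on $Y$ the locus $R$ is cut out only by the Fermat hypersurfaces of degree $(r+1)d,\dots,md$, so Lemma~\ref{lemma:hypertangent} applied on $Y$ gives
$$
\mult_R\big(D|_Y\big)\leqslant\frac{\deg(D|_Y)}{\deg(R)}\prod_{k=r+2}^{m}kd=\frac{l}{(r+1)d}\leqslant 1,
$$
the desired contradiction. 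This restriction to $Y$ is precisely where the hypothesis $Y\not\subseteq\Supp(D)$ and the bound $l\leqslant(r+1)d$ do their work, and it also makes your two-case split unnecessary: the paper treats the full locus $R$ uniformly. You correctly flagged the bookkeeping as the delicate point, but as written your choice of ambient variety for the hypertangent step does not produce the needed inequality.
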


\begin{proof}
As in the proof of Proposition~\ref{proposition:Fermat-cpi}, we may assume that~$l\in \mathbb{N}$.
Suppose that (1) is proved. To prove (2), write
$$
D=t\cdot \frac{l}{d}Y+(1-t)D_0
$$
for some $0\leqslant t\leqslant 1$ and $D_0\sim_{\mathbb{Q}} lH$ such that $Y\not\subseteq\mathrm{Supp}(D_0)$.
Then $(X,D_0)$ is log canonical by~(1).
Hence, every non-log canonical center of $(X,D)$ is a~non-log canonical center of the pair $(X,\frac{l}{d}Y)$.
In particular, the~non-log canonical locus of $(X,D)$ is contained in $Y$. This proves (2).

Now, let us prove (1). Suppose that $Y\not\subseteq \mathrm{Supp}(D)$, and the log pair $(X,D)$ is not log canonical.
Let us seek for a contradiction.
Let $\lambda=\mathrm{lct}(X,D)$ and $Z=\mathrm{Nklt}(X,\lambda D)$.
By Lemmas \ref{lemma:tie-breaking}~and~\ref{lemma:nNklt-base-locus},
we may further assume that $Z$ is $G$-irreducible, $Z$ is a~disjoint union of its irreducible components,
and $Z=\mathrm{Bs}(\mathcal{L})$ for a $G$-invariant linear system $\mathcal{L}\subset|aH|$, where
$$
a=\left(\frac{r(r+1)}{2}-1\right)d+l-r
$$
satisfies
$$
K_X+D+(n-r+1)H\sim_{\mathbb{Q}} aH.
$$

Let $s$ be the number of irreducible components of $Z$,
and  let $Z_1,\ldots,Z_{s}$ be these~components.
By Nadel vanishing applied to the~multiplier ideal sheaf $\mathcal{J}\big(X,\lambda D\big)$, we have a~surjection
$$
H^0\big(X,\mathcal{O}_X(H)\big)\rightarrow H^0\big(Z,\mathcal{O}_{Z}(H\vert_{Z})\big)=\bigoplus_{i=1}^{s}H^0\big(Z_i,\mathcal{O}_{Z_i}(H\vert_{Z_i})\big),
$$
so $s\leqslant h^0(X,\mathcal{O}_X(H))=n+1$.
But $h^0(Z_i,\mathcal{O}_{Z_i}(H\vert_{Z_{i}}))\geqslant 1$ with strict inequality for $\dim(Z_i)>0$.
Thus, if $s=n+1$, then
$$
n+1=h^0\big(Z,\mathcal{O}_{Z}(H\vert_{Z})\big)=\sum_{i=1}^{s}h^0\big(Z_i,\mathcal{O}_{Z_i}(H\vert_{Z_i})\big),
$$
which gives $\dim(Z)=0$, so that we obtain a contradiction $n+1\geqslant |Z|\geqslant d(n+1)>n+1$ as~$d\geqslant 2$,
since the~length of a~$G$-orbit in $X$ is at least $d(n+1)$. This shows that $s\leqslant n$.

Arguing as in the~proof of Proposition~\ref{proposition:Fermat-cpi},
we see that $Z$ is a~component of the~set-theoretic intersection of some $G$-invariant hypersurfaces of degree at most $md$,
where $m=\lfloor\frac{a}{d}\rfloor<n$.
Set
$$
R=X_d\cap X_{2d}\cap \ldots \cap X_{md}.
$$
Then the~pair $(X,D)$ is not log canonical along $R$.
So, since one has  $Y\not\subseteq \mathrm{Supp}(D)$, we have
$$
\mult_R (D|_Y)\geqslant  \mult_R(D)>1.
$$
On the other hand, as in the~proof of Proposition~\ref{proposition:Fermat-cpi}, we obtain $\mult_R(D|_Y)\leqslant 1$ by Lemma~\ref{lemma:hypertangent}.
The obtained contradiction completes the proof of the~lemma.
\end{proof}

Finally we treat the~general case. Here the~main observation is that if the~center of non-canonical singularities is not contained in the~special divisors, then, as a~consequence of Proposition \ref{proposition:technical}, it~has small dimension, and in this case we can prove the~$G$-birational super-rigidity by a~similar application of the~method of \cite{Zhuang}.

\begin{theorem} \label{thm:superrigid cpi}
Let $d\gg 0$, $r\geqslant 2$ be integers. Assume that $-K_X\sim qH$ where $H$ is the~hyperplane class and $1\leqslant q \leqslant \frac{(r+1)d}{2}$. Then $X$ is $G$-birationally super-rigid.
\end{theorem}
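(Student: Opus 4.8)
The plan is to run the Noether--Fano method, exactly as in the proof of Theorem~\ref{theorem:Arman}: it suffices to prove that for every $G$-invariant mobile linear system $\M$ on $X$ and every $\lambda\in\bQ_{>0}$ with $K_X+\lambda\M\sim_\bQ 0$, the pair $(X,\lambda\M)$ is canonical. Assume not, and fix a center $B$ of non-canonical singularities together with the $G$-invariant subvariety $\mathscr{B}\subseteq X$ of which $B$ is a component. Since $\M$ is mobile, $\codim_X(B)\geqslant 2$ and $\mathscr{B}\subseteq\Bs(\M)$, and $X$ is smooth. Writing $\lambda\M\sim_\bQ qH$ with $1\leqslant q\leqslant\frac{(r+1)d}{2}<(r+1)d$ and using that $H-(K_X+\lambda\M)\sim_\bQ H$ is ample, Lemma~\ref{lemma:nlc-hyperplane-section}(1), applied to the mobile boundary $\lambda\M$ (whose support does not contain $Y:=X\cap X_d$), shows that $(X,\lambda\M)$ is log canonical; so $B$ is a non-canonical but log canonical center. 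Moreover I would first dispose of high-codimension centers: if $\codim_X(B)\geqslant q+1$, pick a divisor $E$ with center $B$ realizing the non-canonicity and set $\delta:=a(E;X)\geqslant\codim_X(B)-1\geqslant q$; by Remark~\ref{remark:Ziquan} the pair $(X,(1+\frac{1}{\delta})\lambda\M)$ is not log canonical along $B$, yet $(1+\frac{1}{\delta})\lambda\M\sim_\bQ(1+\frac{1}{\delta})qH$ with $(1+\frac{1}{\delta})q\leqslant q+1\leqslant(r+1)d$ and $H-(K_X+(1+\frac{1}{\delta})\lambda\M)\sim_\bQ(1-\frac{q}{\delta})H$ nef, so Lemma~\ref{lemma:nlc-hyperplane-section}(1) forces it to be log canonical --- a contradiction. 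Hence $\codim_X(B)\leqslant q$, i.e.\ $\dim B\geqslant\dim X-q$, a quantity that grows linearly in $d$.

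The case $\mathscr{B}\subseteq X_d$ is handled by inversion of adjunction. Since a general member of $\M$ does not contain $Y=X\cap X_d$, the pair $(Y,\lambda\M|_Y)$ is not log canonical along $B$. But $Y=X_d\cap X_{2d}\cap\cdots\cap X_{rd}$ is a Fermat complete intersection of the shape treated by Proposition~\ref{proposition:Fermat-cpi} with the same $r$ and $d$: indeed $\lambda\M|_Y\sim_\bQ qH|_Y$ with $q<(r+1)d$, and $dH|_Y-(K_Y+\lambda\M|_Y)\sim_\bQ 0$ is nef because $K_Y\sim(d-q)H|_Y$ by adjunction, while $\dim Y\geqslant1$ and $n\geqslant4$ for $d\gg0$. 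Hence $(Y,\lambda\M|_Y)$ is log canonical --- a contradiction. (After replacing the mobile boundary $\lambda\M|_Y$ by a $G$-invariant effective representative, whose non-log-canonical locus remains $G$-invariant, this is precisely the content of Lemma~\ref{lemma:nlc-hyperplane-section}.)

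The main case is $\mathscr{B}\not\subseteq X_d$. With $\delta=a(E;X)\geqslant1$ as above, $(X,(1+\frac{1}{\delta})\lambda\M)$ is not log canonical along $B$. After tie-breaking (Lemma~\ref{lemma:tie-breaking}) and Nadel vanishing (Lemma~\ref{lemma:nNklt-base-locus}, twisted by $\cO_X(\alpha H)$ for the least admissible $\alpha$, so that $\alpha\leqslant\dim X+q+1$), the non-klt locus $Z$ is realized as a $G$-irreducible $\Bs(\mathcal{L})$ with $\mathcal{L}\subset|\alpha H|$. Decomposing the corresponding $\bG$-representation $V=\bigoplus_\chi\mathbf{x}_\chi\cdot W_\chi$, with $W_\chi$ arising from $\mathrm{Sym}^{\ell}(U)$, $U=\langle x_0^d,\dots,x_n^d\rangle$ and $\ell\leqslant m:=\lfloor\alpha/d\rfloor$, and pushing forward along $\upsilon\colon(x_i)\mapsto(x_i^d)$ exactly as in Propositions~\ref{proposition:Fermat-hypersurfaces} and~\ref{proposition:Fermat-cpi}, the image $\upsilon(Z)$ becomes the base locus of a $W$-invariant subsystem of $|\cO_{\bP^n}(m)|$, with $\upsilon(Z)\neq(1:\cdots:1)$ since $(1:\cdots:1)\notin\upsilon(X_d)$. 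Proposition~\ref{proposition:technical} now forces alternative~(1) --- alternative~(2) would put $Z$, hence $\mathscr{B}$, inside $X_d\cap X_{2d}\cap\cdots$, contrary to assumption --- so $\dim Z\leqslant m-1$, with the extra information that a general point of $Z$ has at most $m$ distinct coordinates, whence $Z$ lies in a linear subspace cut out by coordinate-difference hyperplanes $\{x_i=x_j\}\sim H$. Crucially, since $\alpha\leqslant\dim X+q+1$ while $\dim X$ and $q$ are both $O(d)$, the integer $m$ is bounded by a constant depending only on $r$. One then finishes by the method of~\cite{Zhuang}: feeding the self-intersection cycle $M_1\cdot M_2$ of two general members of $\M$, together with these hyperplanes and the hypertangent divisors $\frac{1}{kd}(X_{kd}\cdot X)$, into Lemma~\ref{lemma:hypertangent}, and comparing the resulting bound on $\mult_Z(M_1\cdot M_2)$ with the $4/\lambda^2$ inequality of \cite[Corollary~3.4]{corti} and \cite{Pukhlikov97} (note $\deg(M_1\cdot M_2)=(q/\lambda)^2\deg X$), yields a numerical contradiction for $d\gg0$.

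I expect the principal obstacle to be keeping this web of inequalities consistent. Passing from ``non-canonical'' to ``non-log-canonical'' scales the boundary class by $1+\frac{1}{\delta}$, which must not destroy the nef hypotheses of Lemma~\ref{lemma:nlc-hyperplane-section}, of Proposition~\ref{proposition:Fermat-cpi}, or of Nadel vanishing; this is exactly why the case $\codim_X(B)\geqslant q+1$ (where $\delta\geqslant q$ and the scaling is harmless) must be peeled off first, and why the contradiction in the final case only materializes for $d\gg0$ --- the hypothesis of the theorem --- namely once $\dim X-q$ outgrows the constant bound on $m$ coming from Proposition~\ref{proposition:technical}. A secondary point requiring care is realizing $Z$ (and the center $B$ itself) as the base locus of a genuinely $W$-invariant linear system of controlled degree, which forces the $\bT$-eigenspace decomposition and the substitution $x_i\rightsquigarrow x_i^d$, and the exclusion of the exceptional alternative $\mathscr{Z}=Z=(1:\cdots:1)$ in Proposition~\ref{proposition:technical} via $(1:\cdots:1)\notin\upsilon(X)$.
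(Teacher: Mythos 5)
Your skeleton follows the paper's: Noether--Fano, exclusion of $B\subseteq X_d$ by inversion of adjunction plus Proposition~\ref{proposition:Fermat-cpi}, then Nadel vanishing, the $\mathbb{T}$-eigenspace decomposition, the substitution $x_i\mapsto x_i^d$, and the dichotomy of Proposition~\ref{proposition:technical}. Your preliminary reduction $\codim_X(B)\leqslant q$ (via $a(E;X)\geqslant \codim_X(B)-1$, Remark~\ref{remark:Ziquan} and Lemma~\ref{lemma:nlc-hyperplane-section}) is not in the paper; granting that Lemma~\ref{lemma:nlc-hyperplane-section} applies to movable boundaries (as the paper itself assumes when it restricts $\M$ to $X\cap\{x_i=0\}$), it does make alternative~(1) of Proposition~\ref{proposition:technical} immediate for $d\gg 0$, replacing the paper's finish via \cite[Corollary~1.8]{Zhuang} and the estimates \eqref{equation:lower-bound}--\eqref{equation:upper-bound}. (Your other suggested finish for that case --- feeding $M_1\cdot M_2$ into Lemma~\ref{lemma:hypertangent} and comparing with $4/\lambda^2$ --- is not Zhuang's method and would not close the case by itself, but it becomes moot.)

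The genuine gap is your dismissal of alternative~(2). Being an irreducible component of a set-theoretic intersection of $G$-invariant hypersurfaces of degree at most $md$ does \emph{not} force $Z\subseteq X_d$: none of those hypersurfaces need be $X_d$ (they could, for instance, all have degree $2d$), so the containment you invoke runs in the wrong direction. What alternative~(2) actually gives --- because the relevant invariant forms are polynomials without constant term in the power sums $\sum_i x_i^{kd}$, $k\leqslant m$ --- is that the non-log-canonical locus \emph{contains} $Z'=X\cap X_d\cap\cdots\cap X_{md}$, while $Z$ itself still satisfies $Z\not\subset X_d$. This case has $\dim(Z)\geqslant n-m$, so it is fully compatible with your bound $\codim_X(B)\leqslant q$ and is not excluded by anything you wrote; it is where the paper spends most of its effort: one takes a component $\Gamma'$ of $M_1\cdot M_2$ with $\mult_Z(\Gamma')/\mathrm{deg}(\Gamma')>1/(\mu^2\,\mathrm{deg}(M_1\cdot M_2))$, intersects it with $X_d$ (legitimate precisely because $Z\not\subset X_d$), and bounds $\mult_{Z'}(\Gamma'\cdot X_d)/\mathrm{deg}$ from above on $Y=X\cap X_d$ by Lemma~\ref{lemma:hypertangent} applied to the hypertangent divisors $\frac{1}{kd}(Y\cdot X_{kd})$, the inequality $2q\leqslant (r+1)d$ delivering the contradiction. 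A secondary omission: before the eigenspace decomposition you must show that $B$ lies in no coordinate hyperplane (the paper restricts to $X\cap\{x_i=0\}$ and applies Lemma~\ref{lemma:nlc-hyperplane-section} again); without this the monomial factors $\mathbf{x}_{\chi}$ cannot be discarded and the push-forward along $\upsilon$ is not justified.
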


\begin{proof}
Assume the~contrary.
Then, using Noether-Fano inequality \cite{Ch05}, we obtain a~non-canonical log pair $(X,\lambda \M)$ such that $\M$ is a mobile~linear system, and $\lambda \in \mathbb{Q}_{>0}$ such that $K_X+\lambda \M \sim_{\mathbb{Q}} 0$.
Let $B$ be a center of non-canonical singularities of the pair $(X,\lambda \M)$.
Let us seek for a~contradiction.

Observe that the center $B$ is not contained in the~Fermat hypersurface $X_d$.
Indeed, otherwise, by the~inverse of adjunction, the~log pair $(Y,M|_Y)$ is not log canonical, where $Y=X\cap X_d$.
But
$$
dH-(K_Y+\lambda \M|_Y)=-(K_X+\lambda \M)|_Y\sim_{\mathbb{Q}} 0,
$$
which is impossible by Proposition~\ref{proposition:Fermat-cpi}.

We claim that $B$ is not contained in any $T$-invariant hyperplane.
Indeed, suppose $B\subseteq \{x_i=0\}$.
Let $X'=X\cap \{x_i=0\}$, and let $G'$ be the stabilizer subgroup in $G$ of the~hyperplane $\{x_i=0\}$.
Then $G^\prime\cong\mumu_d^n\rtimes \SS_n$, and $X'$ is $G^\prime$-invariant.
Let $\M'=\M|_{X'}$.
Then $(X',\lambda \M')$ is not log canonical along $B$ by the~inverse of adjunction.
But $K_{X'}+\lambda \M'\sim_{\mathbb{Q}} H$, which gives $B\subset X_d$ by Lemma~\ref{lemma:nlc-hyperplane-section}.
However, we already proved that $B\not\subset X_d$.

Now by Remark~\ref{remark:Ziquan}, the~log pair $(X,2\lambda \M)$ is not log canonical along $B$.
Let $\mu$ be the smallest positive rational number such that $B\subset \mathrm{Nklt}(X, \mu \M)$,
and let $Z$ be an~irreducible component of the~locus $\mathrm{Nklt}(X, \mu \M)$ containing $B$.
Then $\mu<2\lambda$, and it follows from \cite[Theorem 3.1]{corti} that
$$
\mult_B (M_1 \cdot M_2)>4/\mu^2,
$$	
for general divisors $M_1$ and $M_2$ in the linear system $\M$.
Moreover, it follows from Lemma \ref{lemma:nNklt-base-locus} that the~subvariety $Z$ is a~component of $\mathrm{Bs}(\mathcal{L})$
for some $G$-invariant linear system $\mathcal{L}\subseteq |\mathcal{O}_X(a)|$, where
$$
a=\left(\frac{r(r+1)}{2}-1\right)d+2q-r
$$
satisfies
$$
K_X+2\lambda \M+(n-r+1)H\sim_{\mathbb{Q}} aH.
$$
Furthermore, we know that $Z$ is not contained in any $T$-invariant hyperplane,
because  $B$ is not contained in any $T$-invariant hyperplane.

Set $m=\lfloor \frac{a}{d}\rfloor$. Then $m<n$.
Now, arguing as in the~proof of Proposition~\ref{proposition:Fermat-hypersurfaces} or Proposition~\ref{proposition:Fermat-cpi},
and using Proposition \ref{proposition:technical}, we see that either the~subvariety $Z$ is a~component of a~set-theoretic intersection of $G$-invariant hypersurfaces of degree at most $md$,
or $\dim(B)\leqslant \dim(Z)\leqslant m-1$.

Suppose that the~subvariety $Z$ is a~component of a~set-theoretic intersection of $G$-invariant hypersurfaces of degree at most $md$.
Let $\Gamma'$ be an irreducible component of $M_1\cdot M_2$ such that
$$
\frac{\mult_Z(\Gamma')}{\mathrm{deg}(\Gamma')}\geqslant \frac{\mult_Z(M_1\cdot M_2)}{\mathrm{deg}(M_1\cdot M_2)}>\frac{1}{\mu^2 \mathrm{deg}(M_1\cdot M_2)}.
$$
Since $Z\not\subset X_d$, we observe that $\Gamma=\Gamma'\cdot X_d$ is a~codimension $2$ cycle on $Y=X\cap X_d$ such that
$$
\frac{\mult_{Z'}(\Gamma)}{\mathrm{deg}(\Gamma)}\geqslant \frac{\mult_Z(\Gamma')}{\mathrm{deg}(\Gamma')}>\frac{1}{\mu^2 \mathrm{deg}(M_1\cdot M_2)},
$$
where $Z'=X_d\cap \ldots \cap X_{md} \subseteq Z$. On the other hand, let $H_1$ and $H_2$ be general divisors in $|H|$.
Then, as in the~proof of Proposition~\ref{proposition:Fermat-cpi}, we get
$$
\frac{\mult_{Z'}(\Gamma)}{\mathrm{deg}(\Gamma)}\leqslant \frac{1}{\mathrm{deg} Z'} \prod_{k=r+3}^m kd =\frac{1}{d^2 (r+1)(r+2)\mathrm{deg} (H_1\cdot H_2)}<\frac{1}{\mu^2 \mathrm{deg}(M_1\cdot M_2)},
$$
by Lemma \ref{lemma:hypertangent} applied to the~divisors $Y\cdot X_{kd}$, for  $r+1\leqslant k\leqslant m$ on the~complete intersection $Y$, where the~last inequality follows from $2q\leqslant (r+1)d$. This gives us a contradiction.

Thus, we have $\dim B\leqslant \dim Z\leqslant m-1$.
Let $P$ be a~sufficiently general point in the center $B$,
and let~$Y$ be a~general codimension $m$ linear section of the~complete intersection $X$ containing~$P$.
Set $\M_Y=\M|_Y$.
Then the~pair $(Y,\lambda \M_Y)$ is not log canonical at $P$ by the~inverse of adjunction,
but the~pair $(Y,2\lambda \M_Y)$ is log canonical in a~punctured neighbourhood of the~point $P$.
Note that
$$
K_Y+2\lambda \M_Y\sim_{\mathbb{Q}} (m+q)H.
$$
Using \cite[Corollary~1.8]{Zhuang} and the~lower bound in \cite[Paragraph~56]{Kollar} for the~number of lattice~points,
we obtain the following inequality:
\begin{equation}
\label{equation:lower-bound}
h^0\big(Y,\mathcal{O}_Y((m+q)H\vert_{Y})\big)>\frac{1}{n-m-r}\binom{2(n-m-r)}{n-m-r} \geqslant \frac{1}{(n-m-r)^2} 4^{n-m-r}
\end{equation}
On the~other hand, since $Y$ is a~complete intersection in $\mathbb{P}^{n-m}$, we also have
\begin{equation}
\label{equation:upper-bound}
h^0\big(Y,\mathcal{O}_Y((m+q)H\vert_{Y})\big)\leqslant h^0(\mathbb{P}^{n-m},\mathcal{O}_{\mathbb{P}^{n-m}}(m+q))=\binom{n+q}{m+q}<2^{n+q}.
\end{equation}
Recall that $m=\lfloor \frac{a}{d}\rfloor \leqslant \frac{r(r+1)}{2}+2r$ is bounded by a~constant (that does not depend on $d$),
and, by assumption, we have
$$
n=\left(\frac{r(r+1)}{2}-1\right)d+q-1\geqslant \frac{1}{r+1}\left(\frac{r(r+1)}{2}-1\right)q+q-1\geqslant \frac{5}{3}q-1.
$$
Therefore, using \eqref{equation:upper-bound}, we obtain
$$
h^0\big(Y,\mathcal{O}_Y((m+q)H)\big)<2^{1.6n+1}<\frac{1}{(n-m-r)^2} 4^{n-m-r}
$$
when $n\gg 0$, which is equivalent to $d\gg 0$. This contradicts to \eqref{equation:lower-bound}. The proof is complete.
\end{proof}

\end{document}